\newtheorem{mydef}{Definition}
\newtheorem{mythm}{Theorem}
\newtheorem{mypro}{Proposition}
\newtheorem{mylem}{Lemma}
\newcommand{\unit}{\text{\textbf{1}}}
\newcommand{\Z}{{\mathbb Z}}
\theoremstyle{plain}
\numberwithin{equation}{section}
\theoremstyle{definition}
\theoremstyle{remark}
\newtheorem{conj}{Conjecture}[section]
\newcommand{\mcA}{\mathcal{A}}
\newcommand{\mcB}{\mathcal{B}}
\newcommand{\mcC}{\mathcal{C}}
\newcommand{\mcD}{\mathcal{D}}
\newcommand{\mcM}{\mathcal{M}}
\newcommand{\mcV}{\mathcal{V}}
\newcommand{\mbbB}{\mathbb{B}}
\newcommand{\mbbC}{\mathbb{C}}
\newcommand{\mbbD}{\mathbb{D}}
\newcommand{\mbbS}{\mathbb{S}}
\newcommand{\mbbZ}{\mathbb{Z}}
\author{Yang Qiu}
\address{Department of Mathematics, University of California, Santa Barbara, CA 93106, USA}
\email{yangqiu@math.ucsb.edu}
\author{Zhenghan Wang}
\address{Microsoft Station Q and Department of Mathematics, University of California, Santa Barbara, CA 93106, USA}
\email{zhenghwa@microsoft.com;zhenghwa@math.ucsb.edu}
\begin{document}

\title[Motion Group Representations]{Representations of Motion Groups of Links via Dimension Reduction of TQFTs}

\thanks{Z.W. is partially supported by NSF grants DMS 1411212 and  FRG-1664351.  The second author thanks C.-M. Jian, E. Samperton and K. Walker for related discussions and comments.}

\begin{abstract}
Motion groups of links in the three sphere $\mathbb{S}^3$ are generalizations of the braid groups, which are motion groups of points in the disk $\mathbb{D}^2$.  Representations of motion groups can be used to model statistics of extended objects such as closed strings in physics.  Each $1$-extended $(3+1)$-topological quantum field theory (TQFT) will provide representations of motion groups, but it is difficult to compute such representations explicitly in general.  In this paper, we compute  representations of the motion groups of links in $\mathbb{S}^3$ with generalized axes from Dijkgraaf-Witten (DW) TQFTs inspired by dimension reduction.  A succinct way to state our result is as a step toward a twisted generalization (Conjecture \ref{mainconjecture}) of a conjecture for DW theories of dimension reduction from $(3+1)$ to $(2+1)$: $\textrm{DW}^{3+1}_G \cong \oplus_{[g]\in [G]} \textrm{DW}^{2+1}_{C(g)}$, where the sum runs over all conjugacy classes $[g]\in [G]$ of $G$ and $C(g)$ the centralizer of any element $g\in [g]$.
We prove a version of Conjecture \ref{mainconjecture} for the mapping class groups of closed manifolds and the case of torus links labeled by pure fluxes.
\end{abstract}

%\subjclass[2000]{16W30}

%\date{\today}
\maketitle

\section{Introduction}

Topological quantum field theories (TQFTs) are used as the low energy effective description of topological phases of matter in physics, especially for anyon systems in two spacial dimensions.  A central part of an anyon model is the description of anyon statistics by the representation of braid groups (e.g. see \cite{rowell18}).  Braid groups are simply motion groups of points in the disk, therefore a natural generalization for statistics of extended objects in higher dimensions will be motion groups such as the motion groups of links in the three sphere $\mathbb{S}^3$ \cite{dahm62, goldsmith82}.

Representations of motion groups has been used to model statistics of extended objects such as closed strings in physics (e.g. see \cite{baez07,kadar17,levin14,rowell19}).  Each $1$-extended $(3+1)$-TQFT  will provide representations of the motion groups, but it is difficult to compute such representations explicitly in general.  In this paper, we compute representations of the motion groups of links in $\mathbb{S}^3$ with generalized axes from Dijkgraaf-Witten (DW) TQFTs inspired by dimension reduction.  A succinct way to state our results is a step towards a twisted generalization Conjecture \ref{mainconjecture} of a conjecture for DW theories using dimension reduction from $(3+1)$ to $(2+1)$: $\textrm{DW}^{3+1}_G \cong \oplus_{[g]\in [G]} \textrm{DW}^{2+1}_{C(g)}$, where the sum runs over all conjugacy classes $[g]$ of $G$ and $C(g)$ the centralizer of any element $g\in [g]$\footnote{Our main interest is for the twisted generalization of dimension reduction.  The direct product case of the conjecture as originally mentioned in the abstract has been proved since our paper appeared on the arxiv as outlined by an anonymous referee as below and the paper \cite{LW20}.  The referee pointed out: \lq\lq As the authors note, the (n, n+1)-part of DW theory assigns to a closed n-manifold the vector space of functions on the finite set $[\Pi(M), BG]$ of isomorphism classes of functors $\Pi(M) \rightarrow BG $(where $\Pi(M)$ is the fundamental groupoid of M and BG is the 1-groupoid with a single object and endomorphisms G), and to an n+1 bordism $W:M\rightarrow M'$ the \lq linearization' of the span of finite sets $[\Pi(M), BG] \leftarrow [\Pi(W), BG] \rightarrow [\Pi(M'), BG]$, 
(or more explicitly: the linear map with coefficients at a functor $F:\Pi(M) \rightarrow BG$ and a functor $F': \Pi(M')\rightarrow BG$ given by $\#${iso classes of functors $\Pi(W) \rightarrow BG$ such that the restrictions to $\Pi(M)$ and $\Pi(M')$ agree with F and F'}). Now as the authors note, there is a bijection of sets $[M \times S^1, BG] = [M, [S^1, BG]]$ = $\prod_{\textrm{conjugacy classes}} [M,BC(g)]$, inducing isomorphisms of vector spaces Functions $([M\times S^1, BG])$ = $\sum_{\textrm{conj. classes}} \textrm{Functions}([M, BC(g)]$. A similar computation shows that the bordism map arising from the span $([M\times S^1, BG]\leftarrow [W\times S^1, BG] \rightarrow [M' \times S^1, BG] )$= $\prod_{\textrm{conjugacy classes}} ( [M, BC(g)]\leftarrow [W, BC(g)]\rightarrow [M', BC(g)])$ is itself a direct sum of the appropriate linear maps. Hence, this gives an isomorphism of TQFTs between $DW^{3+1}_G$ and $\sum_{\textrm{conj. classes}} DW^{2+1}_{C(g)}$."}.  We prove a case of the main conjecture \ref{mainconjecture} for mapping class groups in Thm. 1.

Dimension reduction is a simple construction in relating quantum field theories of different dimensions.  Our interest in dimension reduction lies in the categorical context: if the input for an $(n+1)$-TQFT is a certain higher category, what are the resulting categories of the lower dimensional TQFTs from dimension reduction explicitly? If known, then representations of motion groups from the $(n+1)$-TQFT might be reconstructed from the lower dimensional ones.  In this paper, we study representations of the motion groups from $(3+1)$-DW TQFTs with such a goal in mind.  We conjecture that motion groups of links with generalized axes in $\mathbb{S}^3$ can be described using surface braid groups, and prove a version for the special case of torus links labeled by pure fluxes in Thm. 2.

Our main conjecture formulated as Conjecture \ref{mainconjecture} is that 
there is a map from labels of a $1$-extended $(3+1)$-TQFTs to those of $1$-extended $(2+1)$-TQFTs so that the representations of the motion groups of a link $L$ from the $(3+1)$-TQFT decompose as direct sums of the representations of some surface braid groups of the fiber surface $F$ if the link $L$ has a generalized axis $\gamma$ with fiber $F$.  This conjecture is our main focus and is a twisted version of dimension reduction for motion groups.  The standard dimension reduction neither touches on non-product fibrations nor motion group representations of links in $S^3$.

The link complement of a link with a generalized axis is a non-trivial fibration over the circle, hence the computation of the motion group representations does not follow from the product dimension reduction. Our conjecture is not specific to DW TQFTs and should hold for general TQFTs.  The difficulty lies in the right formulation of a correspondence of the label sets.  For $(3+1)$-DW TQFTs, the reduction of the labels for the torus boundary is related to the restriction of the homomorphisms from $\mbbZ^2$ to the meridian circle factor $\mbbZ$.

The content of our paper is as follows.  In Sec. \ref{basic}, we define $1$-extended TQFTs closely following the definition of Reshetikhin-Turaev TQFTs, recall dimension reduction, and outline the theory of motion groups.
In the end, we deduce presentations of the motion groups of the torus links from \cite{goldsmith82}.  In Sec. \ref{sec: DW}, we construct the $1$-extended truncation of DW TQFTs using colorings, find the label sets for the codim=$2$ excitations, and describe explicitly the representation spaces for motion groups of links labeled by pure fluxes.  Sec. \ref{DW:atiyah} is a straightforward generalization of the results in dimension $3$ from \cite{wakui92} to general $n$ dimensions. Sec. \ref{DW:extended} mainly rephrases results from \cite{morton07} in our framework.  We then prove a version of the conjecture for mapping class groups of closed manifolds.  In the last Sec. \ref{motiontorus}, we focus on the motion groups of torus links and prove a special case of our conjecture for torus links.

\subsection{Notations}\label{notation}

In this subsection, we collect some notation that are used throughout the paper.

1. $1$-extended $(n+1)$- or $(n+1,-1)$-TQFT $(Z,V, \mcC)$ for manifolds $(X^{n+1}, Y^{n}, \Sigma^{n-1})$ of dimensions $(n+1,n, n-1)$.

2. Representations vs homomorphisms: homomorphisms from fundamental groups to $G$ will be denoted as $\rho, \alpha,...,$ and the representations that they represent as $[\rho], [\alpha],...,$ since they are simply homomorphims up to conjugation.  Many constructions in this paper involve homomorphisms, not just representations.

3. Conjugacy class $[g]$ of a group $G$, centralizer $C_G(H)$ of a subgroup $H$ of $G$ in $G$ or $C_G(g)$ centralizer of an element $g\in G$ ($G$ will be dropped if no confusion arises).

4.  The vector space of an $n$-manifold $Y$ with boundary $\partial Y=\{\Sigma_i\}_{i=1}^m$ labeled by $\{([g_i,h_i],[\alpha_i])\}_{i=1}^m$ is denoted as $V_G(Y;([g_i,h_i],[\alpha_i]))$.

5. The torus links $TL_{(p,q)^n}$: for any pair of relatively prime natural numbers $p,q\geq 1$, $TL_{(p,q)^n}$ consists of $n$-copies of the $(p,q)$-torus knot.  If the torus is regarded as a folded square, then  $TL_{(p,q)^n}$ is just $n$ parallel copies of a slope $p/q$ curve in the square.

6. The $3$-sphere $\mbbS^3$ is identified with the union $\mathbb{C}\times \mbbS^1\cup \mbbS^1\times\mathbb{C}$ of two open solid tori, with the identifications $(re^{i\theta},e^{i\zeta})\sim(e^{i\theta},\frac{1}{r}e^{i\zeta})$ for $r>0$.  Let $x^{\prime},y^{\prime}$ be the circles $(0,e^{i\zeta}),(e^{i\theta},0)$, which are the cores of the two open solid tori.

7.  Let $\Xi_k$ be set of complex $k$-th roots of unity: $\Xi_k=\{\xi_k^l\}, l=0,1,...,k-1, \xi_k=e^{\frac{2\pi i}{k}}$, $R_t:\mathbb{C}\longrightarrow\mathbb{C}$ be the rotation by $2\pi t$ of the complex plane around the origin : $z \rightarrow e^{2\pi it}z$, and $\iota$ be an inclusion of a subset of $\mathbb{C}\times \mbbS^1$ into $\mbbS^3$.

8. $\pi_1(Y;B,A)$ for an $n$-manifold $Y$ with $m$ boundary components $\{\Sigma_i\}_{i=1}^m$ denotes the fundamental group of $Y$ together with $m$ inclusions of the $m$ fundamental groups $\{\pi_1(\Sigma_i,b_i)\}_{i=1}^m$ of the boundaries to $\pi_1(Y,b)$, where $B=\{b,b_1,..,b_m)\}$ are based points for $Y$ and the $i$-th boundary component $\Sigma_i$, respectively, and $A=\{A_i\}_{i=1}^m$ are $m$ arcs connecting $b$ to $b_i$, respectively.  We choose $A_i$ to be disjoint and embeded if $n\geq 3$.

9.  A homomorphism from $\pi_1(Y;B,A)$ to a group $G$ is a triple $(\rho, \{\alpha_i\}_{i=1}^m, \{g_i\}_{i=1}^m)$ of $\rho: \pi_1(Y,b)\rightarrow G, \alpha_i: \pi_1(\Sigma_i,b_i)\rightarrow G, g_i\in G$ such that for each $i$, $\rho=g_i\cdot \alpha_i\cdot g_i^{-1}$ when restricted to $\pi_1(\Sigma_i,b_i)$. 

10.  The motion group of an oriented submanifold $N$ in the interior of an oriented ambient manifold $M$ is denoted by $\mathcal{M}(N\subset M)$.  The orientation preserving diffeomorphism group of $M$ that fixes $N$ as an oriented submanifold is denoted as $\mathcal{H}^+(M;N)$.  In the case of a surface braid group, we also use the notation $\mathcal{B}(F;P)$ for a surface $F$ and a finite collection of points in the interior of $F$.

\section{Extended TQFTs, Dimension Reduction, and Motion Groups}\label{basic}

Atiyah type TQFTs do not necessarily lead to representations of motion groups, but fully extended ones do.  In this section, we formulate $1$-extended $(n+1)$-TQFTs as generalizations of Reshetikhin-Turaev TQFTs, which always give rise to representations of motion groups of codimension=$1$ submanifolds of spacial $n$-manifolds.  We are especially interested in three spacial dimensions, so motion groups of links in three manifolds.

\subsection{$(n+1,-k)$-TQFTs}

A $k$-extended $(n+1)$- or $(n+1,-k)$-TQFT is one that is extended from $(n+1)$-manifolds all the way to $(n-k)$-manifolds. When $k=0$, they are the Atiyah-type TQFTs, while $k=n$ the fully-extended ones. Formally, they could be defined as monoidal functors between appropriate higher categories. For explicit calculations, a more elementary approach is preferred, and furthermore, we will allow theories with framing anomaly. Our focus is on $k=1$, so we will define $1$-extended TQFTs using explicit axioms similar to the Reshetikhin-Turaev $(2+1)$-TQFTs, which are $1$-extended with framing anomaly in general.

\begin{mydef}

A $1$-extended $(n+1)$-TQFT or $(n+1,-1)$-TQFT is a triple $(Z,V,\mcC)$, where $(Z,V)$ is a projectively\footnote{Our axioms are straightforward generalizations of the axioms in \cite{rowell18} for $(2+1)$ dimensions.} symmetric monoidal functor from the category $\textrm{Bord}^{n+1}_n$ of $(n+1)$- and $n$-manifolds to the category $\mcV{ec}$ of finitely dimensional complex vector spaces, and in addition to this projective Atiyah type TQFT, an assignment of a semi-simple finite category $\mcC(\Sigma)$ to each oriented closed $(n-1)$-manifold $\Sigma$ and a finitely dimensional vector space $V(Y;\{X_l\})$ to each oriented $n$-manifold $Y$ with parameterized and labeled boundary components by $X_l\in \Pi_\mcC(\partial Y)$\footnote{$\Pi_{\mcC}$ is a complete set of the simple representatives of a category $\mcC$. Each connected boundary component of $Y$ is labeled by an object in $\Pi_{\mcC}$.  We use $\simeq$ for vector space isomorphism and $\cong$ for functorial isomorphism.} such that the following axioms hold:

\begin{enumerate}

\item \textrm{Empty manifold axiom:}
$V(\emptyset)=1, \mbbC, \textrm{or}\; {\mcV}ec$ if $\emptyset$ is regarded as a manifold of dimension=$n+1,n,n-1$, respectively.

\item \textrm{Disk axiom:}

$V(D^n;X_i)\cong \begin{cases}
\mbbC, & X_i=\unit,\\
0, & \textrm{Otherwise}, \end{cases}$
\quad \quad where $D^n$ is an $n$-disk and $\unit$ the tensor unit.

\item \textrm{Cylinder axiom:}

\[V(\mcA;X_i,X_j)\simeq \begin{cases} \mbbC & \textrm{if $X_i\simeq X_j^{*}$},\\ 0 & \textrm{otherwise}, \end{cases}\]
where $\mcA$ is the cylinder $\mbbS^{n-1} \times I$, and $X_i,X_j\in \Pi_{\mcC}(\mbbS^{n-1})$.  Furthermore, $V(\mcA;X_i,X_j)\cong \mbbC$ if  $X_i\cong X_j^{*}$.  The notations $\simeq$ and $\cong$ denote isomorphism as vector spaces and functorial isomorphism, respectively.  The difference is necessary due to Frobenius-Schur indicators of labels\footnote{A self-dual simple object type in a spherical fusion category has a Frobenius-Schur (FS) indicator \cite{Wang}, which creates a subtlety that makes the original formulation of gluing axioms in \cite{Walker1991} not general enough to cover theories with non-trivial FS indicators such as $SU(2)_k$.  The problem is traced back to the cylinder axiom.  In the case with non-trivial FS indicators, the identity functor on self-dual objects is not functorial, which leads to inconsistency.   

The inconsistency arose because manifolds with non-empty boundaries have the freedom to absorb a cylindrical neighborhood of the boundary.  The isomorphism of the vector spaces from a TQFT requires functoriality when tensoring the 1-dimension vector space from a cylinder.  In the case with non-trivial FS indicators, the $-1$ would cause inconsistency.  The standard solution as in Turaev’s book \cite{Turaev} is to expand the label set with signed simple objects, hence a (2+1)-TQFT is really a (1+1+1)-TQFT as in graphical calculus.}.

\item \textrm{Disjoint union axiom:}

$V(Y_1 \sqcup Y_2;X_{\ell_1}\sqcup X_{\ell_2})\cong V(Y_1;X_{\ell_1})\otimes
V(Y_2;X_{\ell_2})$.
\\
The isomorphisms are associative, and
compatible with the mapping class group projective actions $V(f): V(Y)\rightarrow V(Y)$ for $f: Y\rightarrow Y$.

\item \textrm{Duality axiom:}

$V(-Y;X_{\ell})\cong V(Y;X_{\ell})^*$, where $-Y$ is $Y$ with the opposite orientation.
The isomorphisms are compatible with mapping class group projective actions, orientation reversal, and the disjoint union axiom as follows:

(i): The isomorphisms $V(Y) \rightarrow V(-Y)^*$ and $V(-Y)\rightarrow V(Y)^*$ are mutually adjoint.

(ii): Given $f: (Y_1;X_{\ell_1})\rightarrow (Y_2;X_{\ell_2})$ 
let $\bar{f}:(-Y_1; X_{\ell_1}^*)\rightarrow (-Y_2;X_{\ell_2}^*)$ be the induced reversed orientation map, we have
$\langle x,y\rangle =\langle V(f)x,V(\bar{f})y\rangle $, where $x\in V(Y_1;X_{\ell_1})$, $y\in V(-Y_1;X_{\ell_1}^*)$.

(iii): $\langle \alpha_1\otimes \alpha_2, \beta_1\otimes \beta_2\rangle =\langle \alpha_1, \beta_1\rangle \langle \alpha_2,\beta_2\rangle$ whenever
\begin{align*}
\alpha_1\otimes \alpha_2 &\in V(Y_1\sqcup Y_2)\cong V(Y_1)\otimes V(Y_2),\\
\beta_1\otimes \beta_2 &\in V(-Y_1\sqcup -Y_2)\cong V(-Y_1)\otimes V(-Y_2).
      \end{align*}

\item \textrm{Gluing axiom:}
Let $Y_{\mathrm{gl}}$ be the surface obtained from
gluing two boundary components $\Sigma$ of a surface $Y$ by a diffeomorphism which is isotopic to the orientation reversing \lq\lq identity"\footnote{The gluing operation should be regarded as the inverse of cutting the manifold $Y_{\mathrm{gl}}$ along the glued boundary component.}.
Then
\[V(Y_{\mathrm{gl}})\cong\bigoplus_{X_i\in \Pi_\mcC(\Sigma)} V(Y;(X_i,X_i^*)).\]
 The isomorphism is associative and
compatible with mapping class group projective actions.

Moreover, the isomorphism is compatible with duality as follows:
Let
\begin{align*}
\bigoplus_{j\in \Pi_{\mcC}}\alpha_j &\in V(Y_{\mathrm{gl}};X_{\ell})\cong \bigoplus_{j\in \Pi_{\mcC}}V(Y;X_{\ell},(X_j,X_j^*)),\\
\bigoplus_{j\in \Pi_{\mcC}}\beta_j &\in V(-Y_{\mathrm{gl}};X_{\ell}^*)\cong \bigoplus_{j\in \Pi_{\mcC}} V(-Y;X_{\ell}^*,(X_j,X_j^*)).
\end{align*}
Then there is a nonzero real number $s_{j}$ for each label $j$ such that
\[\biggl\langle \bigoplus_{j\in \Pi_{\mcC}}\alpha_j, \bigoplus_{j\in \Pi_{\mcC}}\beta_j\biggr\rangle =\sum_{j\in \Pi_{\mcC}}
s_j\langle \alpha_j,\beta_j\rangle .\]

\end{enumerate}

\end{mydef}

In Sec. \ref{DW:extended}, we explicitly describe how to view $(3+1)$-DW TQFTs as $1$-extended ones in the sense above.  
All state-sum TQFTs are fully extended so they should be examples of $1$-extended TQFTs.  More interesting examples are Reshetikhin-Turaev TQFTs, which are $1$-extended examples that are not fully extended.  We are not aware of any $(3+1)$-TQFTs which are $1$-extended, but not fully extended.  Potentially such examples can be constructed analogous to the construction of Reshetikhin-Turaev TQFTs as state-sum $(3+1)$-TQFTs \cite{walker13}.

In a $1$-extended $(n+1)$-TQFT, each oriented closed $(n-1)$-manifold $\Sigma$ is assigned a semi-simple finite category $\mcC(\Sigma)$.  In Crane-Yetter TQFTs from ribbon fusion categories, all such categories assigned to $\Sigma$'s should come from the same input ribbon fusion category as follows: given an oriented closed $2$-manifold $\Sigma$ with a base point, and an input ribbon fusion category $\mcB$ for the Crane-Yetter TQFT, a picture cylindrical category $\mcA(\Sigma \times I)$ can be constructed: the objects of $\mcA(\Sigma \times I)$ are finitely many signed framed points colored by objects of $\mcB$, and morphisms are ribbon graphs between objects colored by morphisms of $\mcB$ up to framed ribbon isotopies relative to the base point in $\Sigma \times I$.   The representation category $\mcC(\Sigma)$ of $\mcA(\Sigma \times I)$ is a semi-simple category that is assigned to $\Sigma$. The label set $\Pi_\mcB(\Sigma)$ is then the set of irreducible representations of the picture cylindrical category $\mcA(\Sigma \times I)$.  For the $1$-truncations of fully extended $(3+1)$-TQFTs from spherical fusion $2$-category as in \cite{reutter18}, all the semi-simple finite categories associated to different surfaces should arise as some doubles.

\subsection{Dimension Reduction}\label{sec:DR}

Dimension reduction (DR) is a simple construction that is widely used in physics that produces an $((n-1)+1)$-TQFT from an $(n+1)$-TQFT for Atiyah type TQFTs. But DR is not always possible in our set-up because in our definition, the vector space associated to the sphere $\mbbS^{n}$ is always $\mbbC$ by the gluing axiom.  The condition $V(\mbbS^n) \cong \mbbC$ is a stability condition in topological physics.  Since the only stable $(1+1)$-TQFT in our sense is trivial, non-trivial $(2+1)$-TQFT cannot be reduced to $(1+1)$ via DR.  Since DR can always be done within Atiyah type TQFTs, hence we discuss DR for only TQFTs regarded as Atiyah type ones. 

\begin{mydef}
Let $(Z,V)$ be an $(n+1)$-TQFT. The resulting $((n-1)+1)$-TQFT $(Z_{DR}, V_{DR})$ defined by 
$$V_{DR}(Y)=V(Y\times S^1)
$$
$$Z_{DR}(X:Y_1\longrightarrow Y_2)=Z(X\times S^1:Y_1\times S^1\longrightarrow Y_2\times S^1)
$$
is called the DR of $(Z,V)$.
\end{mydef}

\begin{mypro}
The fusion algebra $V(T^2)$ of any $1$-extended $(2+1)$-TQFT is a Frobenius algebra, and the reduction of a $(2+1)$-TQFT is the $(1+1)$-TQFT given by the Frobenius algebra $V(T^2)$.
\end{mypro}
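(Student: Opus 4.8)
The plan is to recognize the reduced theory as a genuine $(1+1)$-TQFT and then invoke the classical equivalence between $(1+1)$-TQFTs and commutative Frobenius algebras, finally identifying the resulting algebra with the fusion algebra $V(T^2)$. \textbf{Step 1 (the state space).} Here $n=2$, so $(Z_{DR},V_{DR})$ is a $(1+1)$-TQFT, which assigns vector spaces to closed $1$-manifolds, i.e. to disjoint unions of circles. By the definition of DR,
\[
V_{DR}(S^1)=V(S^1\times S^1)=V(T^2),
\]
so the circle carries the fusion algebra $V(T^2)$, and by the disjoint union axiom $V_{DR}(\sqcup_k S^1)\cong V(T^2)^{\otimes k}$.

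\textbf{Step 2 (Frobenius structure from elementary cobordisms).} I would recall the standard dictionary: the data of a $(1+1)$-TQFT is equivalent to a commutative Frobenius algebra $A=V_{DR}(S^1)$, in which the multiplication $m\colon A\otimes A\to A$ is $Z_{DR}(P)$ for $P$ the pair of pants regarded as a cobordism $S^1\sqcup S^1\to S^1$, the unit is $Z_{DR}(D^2)$ with the disk regarded as $\emptyset\to S^1$, and the Frobenius form (trace) is $Z_{DR}(D^2)$ with the disk regarded as $S^1\to\emptyset$. Associativity, unitality, commutativity, and the non-degeneracy of the Frobenius pairing all follow from the classification of compact oriented genus-$0$ surfaces with boundary together with the gluing axiom; commutativity in particular reflects the diffeomorphism of $P$ exchanging its two incoming boundary circles.

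\textbf{Step 3 (identifying the multiplication with fusion).} It remains to compute $m=Z_{DR}(P)=Z(P\times S^1)$, an oriented cobordism $T^2\sqcup T^2\to T^2$, and match it with the fusion product. I would take the standard basis $\{X_i\}$ of $V(T^2)$ labeled by the simple objects of $\mcC(S^1)$, realized as the images of the solid torus $D^2\times S^1$ colored by $X_i$. Its structure constants are computed by the gluing axiom: capping the three torus boundaries of $P\times S^1$ with solid tori colored by $X_i,X_j,X_k$ yields the closed configuration $S^2\times S^1$ with three labeled circles, since capping the three boundary circles of $P$ with disks gives $S^2$. Evaluating $Z$ gives
\[
Z(S^2\times S^1;X_i,X_j,X_k)=\dim\mathrm{Hom}(X_i\otimes X_j\otimes X_k,\unit),
\]
so after accounting for the duality pairing one obtains $m(X_i\otimes X_j)=\sum_k N_{ij}^k X_k$, the fusion product. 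Hence the Frobenius algebra of the reduced theory is exactly the fusion algebra $V(T^2)$.

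\textbf{Main obstacle.} The step demanding real care is Step 3: confirming that $Z(P\times S^1)$ genuinely computes the fusion coefficients $N_{ij}^k$ through the gluing/surgery calculus, which carries normalization subtleties (factors of the total quantum dimension and the choice of flat versus orthonormal basis, together with the dual bookkeeping in the Frobenius pairing). One must also verify that DR yields an honest, non-projective $(1+1)$-functor even when the original $(2+1)$-theory has framing anomaly; this holds because we work with the Atiyah-type theory and the mapping tori $X\times S^1$ admit a canonical framing on which the anomalous phase is trivial, so the reduced partition functions are well-defined numbers. Once these points are settled, both assertions follow at once: $V(T^2)$ is a commutative Frobenius algebra, and the DR of the $(2+1)$-TQFT is precisely the $(1+1)$-TQFT it determines.
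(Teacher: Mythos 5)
Your proof is correct, but it takes a genuinely different route from the paper's. The paper's argument is algebraic and leans on modularity: in its framework every $1$-extended $(2+1)$-TQFT comes with a modular tensor category, so the $S$-matrix diagonalizes the fusion rules (Verlinde), and in the basis $e_a=\sum_b s_{ab}b$ the fusion algebra becomes $\bigoplus_a \mathbb{C}[e_a]$, a direct sum of one-dimensional Frobenius algebras, hence Frobenius; the second assertion of the proposition is then left essentially implicit. You never touch the $S$-matrix: you apply the classical dictionary between oriented $(1+1)$-TQFTs and commutative Frobenius algebras to the reduced functor $(Z_{DR},V_{DR})$, and then identify the pair-of-pants product $Z(P\times S^1)$ with the fusion product by capping with solid tori and using the trace property $Z(Y\times S^1)=\dim V(Y)$, so that the structure constants are $\dim V(S^2;X_i,X_j,X_k^*)=N_{ij}^{k}$. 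What your route buys: it proves the second half of the statement explicitly, and it needs only the disk, cylinder and gluing axioms of the $1$-extended theory rather than modularity, so it would survive in settings where no Verlinde formula is available. What the paper's route buys: brevity, the sharper structural fact that the fusion algebra is split semisimple (a product of copies of $\mathbb{C}$) with the Frobenius form made explicit by the $S$-matrix, and immunity to the anomaly question you rightly flag, since no composition law of the reduced theory has to be checked to be phase-free. Two small points of bookkeeping in your Step 3: in the paper's axiomatic framework the equality $Z(S^2\times S^1;X_i,X_j,X_k)=\dim\operatorname{Hom}(X_i\otimes X_j\otimes X_k,\unit)$ should be read as $Z(S^2\times S^1;X_i,X_j,X_k)=\dim V(S^2;X_i,X_j,X_k)$, which is the definition of the fusion coefficient (the Hom-space formula is the Reshetikhin--Turaev realization of it); and the duality bookkeeping indeed closes, because the pairing computed by the bent cylinder times $S^1$ is $\langle X_i,X_j\rangle=\dim V(S^2;X_i,X_j)=\delta_{j,i^*}$ by the cylinder axiom, so the dual basis of $X_k$ is $X_{k^*}$ exactly as your formula requires.
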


\begin{proof}
Each $1$-extended $(2+1)$-TQFT is associated with a modular tensor category in our definition and the modular $S=(s_{ab})_{a,b\in L}$-matrix diagonalizes the fusion rules. It follows that in the basis given by $e_a=\sum_{b\in L}s_{ab}b$, the fusion algebra $V(T^2)$ becomes the algebra $A=\sum_{a\in L}\mbbC[e_a]$. It follows that $A$ is a Frobenius algebra of the direct sum of $1$-dimensional Frobenius algebras.
\end{proof}

The proposition illustrates that dimension reduction in general does not preserve the condition that the dimension of the vector spaces from a TQFT for spheres is $1$-dimensional, which is regarded as a stability condition in topological phases of matter.

\subsection{Motion groups}

We first recall some basic notions for motion groups following \cite{dahm62, goldsmith82}, then describe motion groups of links with generalized axes.  In the end, we derive presentations of motion groups of torus links from [Thm 8.7 \cite{goldsmith82}].

\subsubsection{Basic notions}
We work in the smooth category in this section.

Let $N$ be an oriented compact non-empty sub-manifold in the interior of an oriented manifold $M$. A motion $H(x,t)$ of $N$ inside $M$ is an ambient isotopy $H:M \times I \to M \times I$ of $M$ such that $H(x,0)=\textrm{id}_M$ and $H(x,1)(N)=N$ as an oriented sub-manifold, 
The orientation preserving diffeomorphism $H(x,1): M \backslash N \to M \backslash N$ preserves the complement of $N$ in $M$.  A motion $H(x,t)$ can be equally regarded as a path $h_t=H(x,t)$ in the orientation preserving diffeomorphism group $\mathcal{H}^+(M)$ of $M$ based at the identity. Note that isotopy classes of $\mathcal{H}^+(M)$ is the mapping class group of $M$, while motion groups concern only the transformation of non-empty sub-manifolds in $M$ by diffeomorphisms isotopic to the identity.

The composition $H_2 \circ H_1$ of two motions $H_1$ and $H_2$ is defined by:
$$H_2 \circ H_1 = \begin{cases} H_1(x,2t),  & 0 \le t \le 1  \\
H_2(x,2t-1) \circ h_1, & \frac{1}{2} \le t \le 1 \end{cases}.$$
Inversion of a motion is defined by $H^{-1}(x,t)=H(x,1-t)\circ h_1^{-1}$. 

A motion $H(x,t)$ is stationary if $H(x,t)(N)=N$ as a set for all $t \in (0,1)$.  While $H(x,1)(N)=N$ as oriented manifolds, for some $t\in (0,1)$ $H(x,t)$ could be orientation reversing when restricted to the sub-manifold $N$.

Two motions $H_1$, $H_2$ of $N \subset M$ are equivalent if $H_2^{-1} \circ H_1$ is homotopic to a stationary motion.

\begin{mydef}[Motion group]\footnote{In \cite{goldsmith82}, $H(x,1)$ is allowed to be orientation reversing when restricted to $N$, so our motion groups are the oriented ones there.} 
Given oriented manifolds $N \subset M$ as above, then the motion group $\mathcal{M}(N\subset M)$ is the group of motions modulo stationary ones.
\end{mydef}

If $N$ is a point $b$ of $M$, then $\mathcal{M}(N \subset M)$ is the fundamental group $\pi_1(M,b)$ of $M$.  If $N$ consists of $n$ distinct points in the disk $\mbbD^2$, then $\mathcal{M}(N\subset M)$ is the braid group $\mbbB_n$.

When $M=\mathbb{S}^3$, the following exact sequence holds:
%$$
\begin{equation}\label{equ: Dahm}
\mathbb{Z}_2\longrightarrow\mathcal{M}(\sqcup_{i=1}^n N_i\subset M)\stackrel{\partial}{\longrightarrow}\mathcal{H}^+(M;N_1,...,N_n)\longrightarrow 1,
\end{equation}
%$$
where $\partial$ is the Dahm homomorphism [Corollary 1.13 \cite{goldsmith82}].
\subsubsection{Links with generalized axes}

The necklace link in Fig. \ref{fig:my_labelnecklace} is an example of a link with an axis: the unknot train track that the unlink winds around is the axis.  In general, the unlink can be replaced by any braid closure $L$, therefore any link $L$ is a link with an axis.  It is obvious there is a connection between the motion group of $L\cup \gamma$---the link $L$ together with the axis $\gamma$---and the motion group of the intersection points of the link $L$ with the spanning disk $\mbbD^2$ of the unknot $\gamma$ in $\mbbD^2$.  But the relation could be complicated, and also our interest is not on a relation between the motion group of $L\cup \gamma$ and the braid group, rather the motion group of the link $L$ itself and some braid group.  In \cite{goldsmith82}, Goldsmith proved that if the unknot is generalized to a fibered knot $\gamma$ as a generalized axis, then in some cases the motion group of the link $L$ itself is the same as the motion group of $L\cup \gamma$.  One such case is the torus link $TL_{(p,q)^n}$ of $n$-copies of the $(p,q)$ torus knot, which is our focus in this paper.  This is not true in general as the motion group of the unlink---the loop braid group---is different from that of the necklace link---the annulus braid group.

A generalized axis $\gamma$ for some link $L$ is a non-trivial fibered knot $\gamma$ with a fixed fibration $\pi : \mbbS^3 \backslash \gamma  \rightarrow \mbbS^1$. Let $F=\pi^{-1}(1),1\in \mbbS^1$, with closure $\bar{F}$ in $\mbbS^3$, then $\partial \bar{F}=\gamma$.
The fibration is given by a surjective map $f: F\times I \rightarrow \mbbS^3\backslash \gamma, I=[0,1],$ such that 

\begin{enumerate}
    \item $f: F\times (0,1) \rightarrow \mbbS^3\backslash \gamma$ is a diffeomorphism,
    \item $f$ extends to the closure $\bar{F}\times I$ such that $f_0=$id and $f_1$ is a diffeomorhpism of $F$ with compact support, which is called the monodromy.
    \item $f: \gamma \times (0,1)\rightarrow \gamma$ is a projection.
\end{enumerate}

A link $L$ has a generalized axis $\gamma$ if the link $L$ in $\mbbS^3$ is in a braid position with respect to the fixed fibration $F\longrightarrow \mbbS^3 \backslash \gamma\longrightarrow \mbbS^1$ \cite{goldsmith82}, i.e. if a link component is parameterized, then as the parameter increases, so will be the fibration parameter in $I$ of $F\times I$ periodically. Set $P=F\cap L$ and $\phi :(F,P)\longrightarrow(F,P)$ the monodromy. There exists an exact sequence:
%$$
\begin{equation}\label{equ:fiber}
1\longrightarrow<[\phi],[\tau]>\longrightarrow\mathcal{H}_{\phi}(F;P)\stackrel{eJ}{\longrightarrow}\mathcal{H}^+(\mathbb{S}^3;L,\gamma)\longrightarrow\mathbb{Z}_2,
\end{equation}
%$$
where $\mathcal{H}_{\phi}$ is the centralizer of $[\phi]\in\mathcal{H}^+(F;P)=\mathcal{B}(F;P)$, and $[\tau]$ is the central Dehn twist on a push-off of the axis $\gamma$ to a collar of $F$, and $<[\phi],[\tau]>$ is the subgroup generated by $[\phi],[\tau]$  [Thm 5.26 \cite{goldsmith82}].

\subsubsection{Presentations of the motion groups of torus links}

The motion groups of links are interesting generalizations of the braid groups.  Only a few of them are being investigated recently, partially due to the application to statistics of loop excitations in physics.  The motion groups of the unlinks, which are loop braid groups \cite{baez07, kadar17}, and the motion groups of the necklace links \cite{levin14,rowell19}, are the main focus.  Our interest is on the motion groups of the torus link $TL_{(p,q)^n}$---$n$ parallel copies of the $(p,q)$-torus knot.  

In \cite{goldsmith82}, Goldsmith obtained a presentation of the motion groups of the torus link $TL_{(p,q)^n}$ using the theory of motion groups of links with a generalized axis.  Torus links $TL_{(p,q)^n}$ are examples of links $L$ such that the motion groups of $L\cup \gamma$---the link plus the axis---is the same as the link $L$ itself (without the axis).  We observe that the presentation in [Thm 8.7 \cite{goldsmith82}] implies that actually there are only three families of motions groups of the torus links $TL_{(p,q)^n}$: $(p,q)=(1,1), p+q=$odd, or $p+q=$even indexed by $n$.
Hence as abstract groups, when $(p,q)\neq (1,1)$, the motion groups of $TL_{(p,q)^n}$ depend only on the parity of $p+q$.  

Presentations of the three families of motions groups indexed by $n$ for $TL_{(p,q)^n}, p,q>1$ are as follows.

\begin{mypro}\label{presentation}

\begin{enumerate}
    \item If $p+q$ with $p,q\geq 1$ is odd, then the odd motion groups $\mcM TL_{n,-}$ of the torus links has a presentation:
    $$<\sigma_1,\cdots, \sigma_{n-1},r_1,\cdots, r_n|\{\sigma_j\}_{j=1}^{n-1}\;\textrm{satisfy the braid relations},\;
    $$
    $$r_1\cdots r_n=1, r_ir_k=r_kr_i, r_i\sigma_j=\sigma_jr_i, 1\leq i,k\leq n, j\neq i-1>,$$
    \item If $p+q$ is even but not $p=q=1$, then the even motion groups $\mcM TL_{n,+}$ of the torus links has a presentation:
    $$<\sigma_1,\cdots, \sigma_{n-1},r_1,\cdots, r_n, r_{2\pi}|\{\sigma_j\}_{j=1}^{n-1}\;\textrm{satisfy the braid relations},\;$$
    $$r_1\cdots r_n=1, r_ir_k=r_kr_i, r_i\sigma_j=\sigma_jr_i, j\neq i-1, r_1\cdots r_n=r_{2\pi}, r_{2\pi}^2=1>,$$
    \item If $p=q=1$, then the motion groups $\mcM H_{n}$ of the $n$-Hopf links has a presentation:
    $$<\sigma_1,\cdots, \sigma_{n-1},r_1,\cdots, r_n|\{\sigma_j\}_{j=1}^{n-1}\;\textrm{satisfy the braid relations},\;$$
    $$r_1\cdots r_n=1, r_ir_k=r_kr_i, r_i\sigma_j=\sigma_jr_i, j\neq i-1, r_1=r_n=1>.$$
\end{enumerate}

\end{mypro}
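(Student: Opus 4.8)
The plan is to derive the three presentations directly from Goldsmith's presentation of $\mathcal{M}(TL_{(p,q)^n})$ in [Thm 8.7 \cite{goldsmith82}] by recording precisely where the parameters $p$ and $q$ enter and then applying Tietze transformations. Goldsmith's presentation is built from the exact sequence (\ref{equ:fiber}), which exhibits the motion group as an extension of a subgroup of $\mathcal{H}^+(\mathbb{S}^3;L,\gamma)$ by the central subgroup $\langle[\phi],[\tau]\rangle$ generated by the monodromy $[\phi]$ of the fibration and the boundary Dehn twist $[\tau]$. Under this description the generators $\sigma_1,\dots,\sigma_{n-1}$ are the standard braid generators of the surface braid group $\mathcal{B}(F;P)=\mathcal{H}^+(F;P)$ permuting the $n$ points $P=F\cap L$, and the $r_i$ are the spinning (loop-rotation) motions of the individual torus-knot components; all of the relations among them other than the braid relations come from the structure of the centralizer $\mathcal{H}_{\phi}$ and the two central relations on $[\phi]$ and $[\tau]$.

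First I would compute the action of the monodromy $\phi$ of the $(p,q)$-fibration on the $n$ marked points $P$ and on a collar of the axis. The monodromy is periodic up to the boundary twist, and I would show that the relation it imposes reduces to $r_1\cdots r_n=1$ in the surface braid group, while the commutation relations $r_ir_k=r_kr_i$ and $r_i\sigma_j=\sigma_jr_i$ for $j\neq i-1$ follow at once from the disjoint supports of the corresponding motions. The crux of the argument is the identification of the single residual $(p,q)$-dependence: I claim that a full $2\pi$ spin $r_{2\pi}$ of one component, together with the framing it induces relative to $[\tau]$, is isotopically trivial exactly when $p+q$ is odd and survives as a central involution when $p+q$ is even. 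This parity computation --- essentially a calculation of a self-framing of the $(p,q)$-curve modulo $2$ --- is the geometric source of the extra generator $r_{2\pi}$ and the relations $r_1\cdots r_n=r_{2\pi}$, $r_{2\pi}^2=1$ appearing in case (2) but absent in case (1). The degenerate case $p=q=1$ is treated separately: here the generalized-axis construction degenerates, the monodromy contributes no twisting, and the spins $r_1$ and $r_n$ of the two extremal components become isotopically trivial, forcing the additional relations $r_1=r_n=1$ of case (3).

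With the parity dichotomy in hand, the remaining work is bookkeeping: I would eliminate the now-redundant generators of $\langle[\phi],[\tau]\rangle$, rewrite each surviving relation of Goldsmith's presentation in terms of $\{\sigma_j\}$, $\{r_i\}$ and (when $p+q$ is even) $r_{2\pi}$, and check that no further relations survive. I expect the main obstacle to be precisely this faithful translation of Goldsmith's geometric generators into the clean algebraic form above --- in particular, pinning down how the periodic monodromy interacts with the central twist $[\tau]$ and establishing rigorously that the square of the half-spin is nontrivial exactly in the even-parity case. Once that single mod-$2$ computation is secured, the collapse of Goldsmith's $(p,q)$-indexed family of presentations into the three listed families is a routine sequence of Tietze moves.
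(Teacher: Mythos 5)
Your proposal has a genuine gap at precisely the point where the proposition's content lies. The paper's derivation is deliberately short because it takes Goldsmith's Theorem 8.7 as a black box: it identifies the $r_i$ with Goldsmith's $\rho_i$, discards the generator $[f]$ (which is orientation-reversing and hence absent from the oriented motion group), corrects a typo in relation 7, and then performs Tietze transformations in which the explicit $p,q$-dependence of Goldsmith's relations is eliminated using Bezout integers $u,v$ with $pu-qv=1$; the parity of $p+q$ is what survives this algebra. You instead propose to re-derive the presentation geometrically from the exact sequence (\ref{equ:fiber}), which amounts to re-proving Theorem 8.7 itself: that sequence only describes $\mathcal{H}^{+}(\mathbb{S}^3;L,\gamma)$, i.e.\ the link \emph{together with} the axis, so you would additionally need Goldsmith's nontrivial identification of $\mathcal{M}(TL_{(p,q)^n}\subset\mathbb{S}^3)$ with the motion group of $L\cup\gamma$, plus the Dahm homomorphism analysis from (\ref{equ: Dahm}) to account for the $\mathbb{Z}_2$ that produces $r_{2\pi}$. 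None of this is sketched beyond being named.

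More seriously, the two computations your argument actually hinges on are asserted rather than performed. First, the parity dichotomy: you claim $r_{2\pi}$ is trivial exactly when $p+q$ is odd, to be established by ``a calculation of a self-framing of the $(p,q)$-curve modulo $2$,'' but this calculation is never set up, and in the route through Theorem 8.7 the parity dependence does not arise from any framing argument --- it falls out of rewriting the one relation of \cite{goldsmith82} that contains $p$ and $q$ via $pu-qv=1$. Note also that to obtain presentation (2) you do not need (and should not try) to prove $r_{2\pi}\neq 1$; the task is to derive exactly the listed relations by Tietze moves, not to decide a triviality question about the group. Second, the case $p=q=1$: your explanation that ``the spins $r_1$ and $r_n$ of the two extremal components become isotopically trivial'' is unjustified and suspicious on its face, since all components of $TL_{(1,1)^n}$ are equivalent fibers of the Hopf fibration; the relations $r_1=r_n=1$ must be extracted from the degenerate form of Goldsmith's presentation (where the axis is an unknotted core circle and the fiber is a disk), not inferred from symmetry-breaking language. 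Until the framing claim and the Hopf-link degeneration are actually proven --- or replaced by the straightforward algebraic reduction from Theorem 8.7 that the paper uses --- the proposal does not constitute a proof.
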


These presentations are derived from [Thm 8.7 \cite{goldsmith82}].  First our $r_i$'s are the $\rho_i$'s there.  Secondly the generator $[f]$ there does not exist in our oriented motion group as it is an orientation reversing diffeomorhpism. There is a typo in relation $7$---one $p$ is a $q$, and then our presentations follow from the existence of integers $u,v$ such that $pu-qv=1$. 

\section{Dijkgraaf-Witten TQFTs}\label{sec: DW}

Given a finite group $G$, the untwisted DW $(n+1)$-TQFTs based on $G$ for any $n\geq 1$ are among the best understood examples of fully extended TQFTs.  In this section, we provide an elementary formulation based on a generalization of the combinatorial construction in \cite{wakui92} from coloring triangulations and the extension to manifolds with boundaries in \cite{morton07}.  Our goal is to set-up notation, and to describe the label sets for the $1$-extended truncation and the vector spaces as representations of the motion groups of links in the three sphere explicitly.

\subsection{DW TQFTs as Atiyah type}\label{DW:atiyah}

Let $M$ be an oriented compact triangulated $m$-manifold with boundary $\partial M$. Let $v$ be the number of the vertices of $M$ and $\partial v$ the number of the vertices on $\partial M$.  We use both $\sharp S$ and $|S|$ to denote the number of elements in a set $S$.

\begin{mydef}
Given a finite group $G$, a coloring $\varphi=[c]$ of the triangulated manifold $M$ is an equivalence class of assignments of an orientation $\pm$ and a group element $g\in G$ to each edge of $M$:
$$c:\{\text{edges of }M\}\longrightarrow (\pm,G)
$$ that for any oriented triangle, the colors of the three edges satisfy:
$1\stackrel{g}{\longrightarrow} 2, 2\stackrel{h}{\longrightarrow}3$, then the edge  $1\stackrel{gh}{\longrightarrow} 3$.  The equivalence class of a coloring is generated by the relation that 
if an oriented edge is colored by $g$, then the edge with the opposite orientation is colored by $g^{-1}$.

\end{mydef}

For simplicity, we suppose that $M$ is connected. A coloring $\varphi$ essentially defines a flat principle $G$-bundle on $M$. After a vertex $x$ is chosen as a base-point of $M$, the holonomy representation of $\varphi$ defines a homomorphism $\varphi_*:\pi_1(M,x)\longrightarrow G$, which is determined in the following combinatorial way. Any loop $\alpha$ at $x$ can be homotopic to a loop consisting of some edges of the triangulation of $M$. Then $\varphi_*(\alpha)$ is defined to be the product of the elements coloring the edges following the direction of $\alpha$.

Conversely, each homomorphism $\rho:\pi_1(M,x)\longrightarrow G$ gives rise to colorings as follows: pick any maximal tree $T$ of the $1$-skeleton $M^{(1)}$ of $M$ and retract the tree $T$ to the base point $x$, then the $1$-skeleton $M^{(1)}$ contracts to a bouquet of circles.  Each circle in the retraction receives a group element from $\rho$, then coloring every edge of $T$ by the group unit and each circle according to the image of $\rho$ leads to a desired coloring.

The set of all colorings of $M$ will be denoted by Col$(M)$ and the set of all colorings of $M$ with restriction on $\partial M$ being a particulr color $\tau$ by Col$(M,\tau)$.

\begin{mydef}
For any $M$ as above and $\tau$ a coloring of $\partial M$, the partition function $Z(M,\tau)$ is defined as
$$Z(M,\tau)=|G|^{\frac{\partial v}{2}-v}\sharp\text{Col}(M,\tau).
$$
\end{mydef}

\begin{mypro}\label{DW:Prop2}
If a triangulation and a coloring $\tau$ of $\partial M$ are fixed, then $Z(M,\tau)$ does not depend on the extending triangulations of $M$.
\end{mypro}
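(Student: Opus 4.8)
The plan is to reduce the statement to the invariance of $Z(M,\tau)$ under a single local move and then verify that invariance by a direct count of colorings, using the holonomy correspondence described above. First I would invoke the relative version of Pachner's theorem: any two triangulations of the compact manifold $M$ that induce the same fixed triangulation on $\partial M$ are connected by a finite sequence of bistellar (Pachner) moves supported in the interior of $M$. Since every such move is performed away from $\partial M$, the number $\partial v$ of boundary vertices and the boundary coloring $\tau$ are left untouched throughout, so it suffices to show that $Z(M,\tau)=|G|^{\partial v/2 - v}\,\sharp\mathrm{Col}(M,\tau)$ is unchanged by one interior move.

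I would model a single move as $M=M_0\cup_S B$ versus $M'=M_0\cup_S B'$, where $B,B'$ are the two triangulated $m$-balls filling the same triangulated boundary sphere $S=\partial B=\partial B'$ and $M_0$ is the common complement glued along $S$. Because every simplex of $M$ lies in $M_0$ or in $B$, a coloring of $M$ is a pair of colorings of $M_0$ and of $B$ agreeing on $S$, so restriction gives $\sharp\mathrm{Col}(M,\tau)=\sum_{c_0\in\mathrm{Col}(M_0,\tau)} N_B(c_0|_S)$, and likewise for $M'$, where $N_B(\beta)$ is the number of colorings of $B$ restricting to a given coloring $\beta$ of $S$. Everything then reduces to the local lemma: for a triangulated $m$-ball $B$ with boundary sphere $S$ and a coloring $\beta$ of $S$, one has $N_B(\beta)=|G|^{\,i(B)}$ when the holonomy homomorphism $\pi_1(S)\to G$ induced by $\beta$ is trivial and $N_B(\beta)=0$ otherwise, where $i(B)$ denotes the number of interior vertices of $B$.

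I would prove the lemma with the gauge picture: since $B$ is simply connected, every coloring of $B$ is pure gauge, i.e.\ of the form $c(v\to w)=h_v^{-1}h_w$ for a function $v\mapsto h_v\in G$ on the vertices, unique up to a global left translation. Normalizing at a basepoint of $S$ removes the global freedom; the connected boundary $S$ then determines $h_v$ for all $v\in S$ from $\beta$ precisely when $\beta$ is pure gauge on $S$, which is exactly the vanishing of the holonomy $\pi_1(S)\to G$, while the values at the $i(B)$ interior vertices stay free and contribute the factor $|G|^{i(B)}$. When $m\ge 3$ the sphere $S\cong\mathbb{S}^{m-1}$ is simply connected, so the holonomy condition is automatic; for $m=2$ it is a genuine condition on $\beta$, but one intrinsic to $S$ alone. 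Hence the predicate ``$c_0|_S$ has trivial holonomy'' depends only on $S$ and $c_0$, not on the filling ball, so the same subset $C_0\subseteq\mathrm{Col}(M_0,\tau)$ occurs for both $B$ and $B'$, giving $\sharp\mathrm{Col}(M,\tau)=|G|^{i(B)}|C_0|$ and $\sharp\mathrm{Col}(M',\tau)=|G|^{i(B')}|C_0|$. (The vertex-changing $(1,m{+}1)$ move is the case $i(B)\neq i(B')$, the remaining moves the case $i(B)=i(B')=0$.)

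Finally I would reconcile this with the normalization. As $M$ and $M'$ share $M_0$ and the boundary sphere $S$, their vertex counts are $v=v(M_0)+i(B)$ and $v'=v(M_0)+i(B')$, while $\partial v$ is common, so
\[
Z(M,\tau)=|G|^{\partial v/2 - v(M_0)-i(B)}\,|G|^{i(B)}\,|C_0|=|G|^{\partial v/2 - v(M_0)}\,|C_0|,
\]
which is manifestly independent of the filling ball; thus $Z(M,\tau)=Z(M',\tau)$, and induction over the sequence of moves finishes the proof. I expect the main obstacle to be the careful proof of the local lemma---establishing that colorings of a ball are pure gauge, that the fill-in count is exactly $|G|^{i(B)}$, and that the compatibility condition is intrinsic to $S$---together with a correct invocation of the relative Pachner theorem so that no move disturbs the fixed boundary triangulation.
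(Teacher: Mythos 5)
Your proof is correct, but it takes a genuinely different route from the paper's. The paper proves Proposition \ref{DW:Prop2} without any Pachner-type theorem: it establishes Lemmas \ref{DW:Lem1} and \ref{DW:Lem2}, which convert the coloring count into purely topological data, namely $\sharp\text{Col}(M,\tau)=|G|^{v-\partial v}\prod_{i=1}^l|C_G(\mathrm{Im}(\tau_i)_{\ast})|\,\sharp\{f\in\Hom(\pi_1(M,b),G)\mid f(\gamma_i)_{\ast}(\iota_i)_{\ast}\sim(\tau_i)_{\ast}\}$; substituting this into $Z(M,\tau)=|G|^{\partial v/2-v}\sharp\text{Col}(M,\tau)$ cancels every interior-vertex contribution and leaves $|G|^{-\partial v/2}\prod_{i}|C_G(\mathrm{Im}(\tau_i)_{\ast})|\,\sharp\{f\mid\cdots\}$, which manifestly depends only on the fixed boundary triangulation, on $\tau$, and on the topology of $M$, so invariance is immediate. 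Your argument --- relative Pachner moves plus the local lemma that a triangulated $m$-ball $B$ with boundary coloring $\beta$ admits exactly $|G|^{i(B)}$ flat fillings when the holonomy of $\beta$ is trivial and none otherwise --- is the standard state-sum locality proof, and its steps are sound: the gauge-fixing count on a simply connected complex is right, the holonomy condition is indeed intrinsic to $S$ (so the same subset $C_0$ serves both fillings), and the exponent $-v$ in the normalization absorbs $|G|^{i(B)}$ exactly as you computed. The trade-offs are worth noting. The paper's global computation is not extra work, since Lemma \ref{DW:Lem2} is immediately reused to compute $Z(\mathrm{Id}_Y)$, the basis $\{e_k\}$, and the mapping class group action; it also needs no PL machinery and applies to arbitrary simplicial triangulations. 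Your proof leans on the relative Pachner theorem (in the fixed-boundary, interior-moves form, due to Pachner and Casali), so strictly it covers combinatorial triangulations and that citation should be made precise; in exchange it is local, and locality is exactly what survives generalization to twisted DW theories, where each coloring is weighted by a cocycle evaluation and a bare count of homomorphisms no longer proves invariance.
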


Prop. \ref{DW:Prop2} follows directly from Lemmas \ref{DW:Lem1} and \ref{DW:Lem2} below.

\begin{mylem}\label{DW:Lem1}
$$\sharp\text{Col}(M)=|G|^{v-1}\sharp\text{Hom}(\pi_1(M,x),G)
$$
where $x$ is any base-point of $M$.
\end{mylem}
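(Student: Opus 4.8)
The plan is to count colorings by organizing them into orbits of a gauge action and matching each orbit with a homomorphism. Introduce the group of \emph{gauge transformations} $\mathcal{G}=\operatorname{Map}(\{\text{vertices of }M\},G)\cong G^{v}$, acting on a coloring $c$ by $(\gamma\cdot c)(e)=g_a\,c(e)\,g_b^{-1}$ for an edge $e$ running from vertex $a$ to vertex $b$, where $\gamma=\{g_y\}$. First I would check that this action is well defined on equivalence classes of colorings (it is compatible with edge reversal $g\mapsto g^{-1}$) and that it preserves the triangle condition: for the three edges $e_{12},e_{23},e_{13}$ of an oriented $2$-simplex with $c(e_{12})c(e_{23})=c(e_{13})$, the telescoping identity $g_1 c(e_{12})g_2^{-1}\cdot g_2 c(e_{23})g_3^{-1}=g_1 c(e_{13})g_3^{-1}$ shows that $\gamma\cdot c$ again lies in $\operatorname{Col}(M)$.

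Next, fix a maximal tree $T$ of the $1$-skeleton $M^{(1)}$; it has exactly $v-1$ edges. Let $X_0\subset\operatorname{Col}(M)$ be the subset of colorings that are trivial (equal to the group unit) on every edge of $T$. The two constructions already recorded before the statement --- the holonomy homomorphism $\varphi_*$ and its inverse obtained by retracting $T$ to the base point $x$ and coloring the resulting bouquet of circles by a given $\rho$ --- restrict to mutually inverse bijections between $X_0$ and $\operatorname{Hom}(\pi_1(M,x),G)$; concretely a tree-trivialized coloring is exactly a choice of one group element per non-tree edge subject to the relations imposed by the $2$-simplices, which is precisely the standard edge-path presentation of $\pi_1(M,x)$. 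Hence $\sharp X_0=\sharp\operatorname{Hom}(\pi_1(M,x),G)$.

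The core step is to prove that the restricted action map
\[
\mathcal{G}_x\times X_0\longrightarrow\operatorname{Col}(M),\qquad (\gamma,c_0)\longmapsto\gamma\cdot c_0,
\]
is a bijection, where $\mathcal{G}_x=\{\gamma\in\mathcal{G}:g_x=\mathbf{1}\}$ has order $|G|^{v-1}$. For surjectivity, given any $c\in\operatorname{Col}(M)$ there is a unique $\gamma\in\mathcal{G}_x$ trivializing $T$: set $g_x=\mathbf{1}$ and propagate outward along $T$, defining $g_y$ as the product of the colors along the unique tree path from $x$ to $y$; then $\gamma\cdot c\in X_0$ and $c=\gamma^{-1}\cdot(\gamma\cdot c)$. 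For injectivity, if $\gamma\cdot c_0=\gamma'\cdot c_0'$ put $\delta=(\gamma')^{-1}\gamma\in\mathcal{G}_x$, so $\delta\cdot c_0=c_0'$; evaluating on a tree edge $a\to b$ gives $g_a g_b^{-1}=\mathbf{1}$, i.e.\ $g_a=g_b$, and since $T$ is connected with $g_x=\mathbf{1}$ this forces $\delta=\operatorname{id}$, whence $\gamma=\gamma'$ and $c_0=c_0'$. Counting then yields $\sharp\operatorname{Col}(M)=|\mathcal{G}_x|\cdot\sharp X_0=|G|^{v-1}\sharp\operatorname{Hom}(\pi_1(M,x),G)$.

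The step I expect to be the main obstacle is the clean identification $X_0\cong\operatorname{Hom}(\pi_1(M,x),G)$: one must verify that the triangle conditions on a tree-trivialized coloring correspond exactly to the relations in the edge-path presentation of $\pi_1$ (using $\pi_1(M)=\pi_1(M^{(2)})$), and that the orientation/inverse bookkeeping in the definition of $\operatorname{Col}(M)$ introduces no spurious classes. Everything else is elementary group bookkeeping once the gauge action is in place.
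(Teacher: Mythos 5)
Your proof is correct, and its counting core --- the $|G|^{v-1}$ vertex gauge transformations fixing the base point --- is the same mechanism the paper uses. Where you genuinely diverge is in how the bijection with $\operatorname{Hom}(\pi_1(M,x),G)$ is established. The paper first invokes flat principal $G$-bundles and the classifying space $K(G,1)$ to show that every homomorphism is realized by some coloring, and then shows that the colorings with a fixed holonomy homomorphism $f$ form a single orbit of the vertex-coloring action by propagating a vertex coloring outward from the base point; that propagation step implicitly requires path-independence, which holds precisely because the two colorings being compared have equal (not merely conjugate) holonomy, a point the paper leaves tacit. You avoid the bundle-theoretic input altogether: the maximal-tree trivialization supplies canonical orbit representatives $X_0$, the edge-path presentation of $\pi_1$ identifies $X_0$ with $\operatorname{Hom}(\pi_1(M,x),G)$ (the combinatorial counterpart of the paper's preamble construction of a coloring from a homomorphism), and the product decomposition $\mathcal{G}_x\times X_0\cong\operatorname{Col}(M)$ makes freeness explicit, with well-definedness automatic because tree paths are unique. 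What the paper's route buys is the conceptual link between colorings and flat bundles, which is the gauge-theoretic heart of DW theory; what yours buys is a self-contained, purely combinatorial argument in which the injectivity/surjectivity bookkeeping is cleaner and the path-independence subtlety never arises.
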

\begin{proof}
The triangulation of $M$ leads to a chart atlas for $M$ such that each vertex indexes an open ball and each edge means that the two open balls for the two end points intersects in an open ball. Thus for any $G$-principle bundle over $M$, a trivialization on this atlas with transition functions as elements of $G$ coloring edges defines a coloring of $M$. Since $G$ is a finite group, the classifying space of $G$ is $K(G,1)$. It follows that for any $f\in$ Hom$(\pi_1(M,x),G)$, there exists a $G$-principle bundle on $M$ whose holonomy representation is $f$, which defines a coloring of $M$.

To show that for any $f\in$ Hom$(\pi_1(M,x),G)$, there is a canonical way to  color the vertices resulting all the colorings whose holonomy representation is $f$, we choose a vertex $x$ as the base point and use $G$ to color all the vertices except the base point $x$. Let $\varphi$ be a coloring of $M$ realizing $f$. By each coloring $\phi$ for vertices, we can change $\varphi$ to $\varphi_{\phi}$ as follows:  $a(h)\stackrel{g}{\longrightarrow}b(k)$, where $g$ colors the edge $ab$ and $h$ colors the vertex $a$, $k$ colors the vertex $b$, is changed to be $a\stackrel{h^{-1}gk}{\longrightarrow}b$. A straightforward check shows that  $\varphi_{\phi}$ is a still coloring of $M$ whose holonomy representation is $f$. Pairwise different $\phi$ result in pairwise different $\varphi_{\phi}$.
To show that any coloring whose holonomy representation is $f$ can be obtained in this way, note that since the group unit $1$ is used to color the base point, we can find the element coloring each vertex from the vertices near the base point to the ones far from the base point by the elements coloring edges.  Finally, there are $|G|^{v-1}$ colorings for vertices.
\end{proof}

\begin{mylem}\label{DW:Lem2}
Let $B_1,...,B_l$ be the components of $\partial M$ and $b\in M,b_i\in B_i$ be the base points of $M,B_i$, respectively.  A coloring $\tau$ of $\partial M$ decomposes as: $\tau=\bigsqcup_{i=1}^{l}\tau_i$. Choose paths $\gamma_1,...\gamma_l$ connecting $b$ to $b_1,...,b_l$, we obtain 
$\sharp\text{Col}(M,\tau)=$
\begin{align*}
|G|^{v-\partial v}\prod_{i=1}^l|C_G(Im(\tau_i)_{\ast})|\sharp\{f\in\text{Hom}(\pi_1(M,b),G)|f(\gamma_i)_{\ast}(\iota_i)_{\ast}\sim(\tau_i)_{\ast}\}
\end{align*}
where $\iota_i:B_i\longrightarrow \partial M$ is the inclusion.  The induced homomorphism $(\gamma_i)_*:\pi_1(M,b_i)\longrightarrow\pi_1(M,b)$ comes from $\gamma_i$ and $\sim$ means conjugation as two homomorphisms.  Here $\varphi_*$ denotes the corresponding group homomorphism for a coloring $\varphi$.
\end{mylem}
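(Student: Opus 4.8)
The plan is to count the colorings of $M$ restricting to $\tau$ by fibering the boundary-restriction map $\mathrm{Col}(M)\to\mathrm{Col}(\partial M)$ over $\tau$ and organizing the fiber by based holonomy, in the spirit of Lemma~\ref{DW:Lem1}. Recall from the proof of that lemma that, after fixing one reference coloring, the colorings of $M$ with a prescribed based holonomy $f\colon\pi_1(M,b)\to G$ form a torsor under the vertex-gauge group $\{\phi\colon\{\text{vertices}\}\to G\mid \phi(b)=1\}$, where a gauge $\phi$ sends the color $g$ of an oriented edge $a\to c$ to $\phi(a)^{-1}g\,\phi(c)$. Thus I would write $\sharp\mathrm{Col}(M,\tau)=\sum_f\sharp\{\varphi\mid \varphi_*=f,\ \varphi|_{\partial M}=\tau\}$ and evaluate the inner count for each $f$.

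First I would prove that the conjugacy condition is necessary. If $\varphi$ restricts to $\tau$ and $g_i:=\varphi_*(\gamma_i)\in G$ denotes the product of the edge-colors of $\varphi$ along $\gamma_i$, then transporting the base point from $b$ to $b_i$ along $\gamma_i$ conjugates holonomies, so on $\pi_1(B_i,b_i)$ one obtains $f\circ(\gamma_i)_*\circ(\iota_i)_*=g_i\,(\tau_i)_*\,g_i^{-1}$. Hence $f(\gamma_i)_*(\iota_i)_*\sim(\tau_i)_*$ is forced for every $i$, and only such $f$ can contribute to the sum.

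Next, for a fixed admissible $f$ I would compute the fiber. Since the colors of the edges of $B_i$ depend only on the gauge values at the vertices of $B_i$, the constraint $\varphi|_{\partial M}=\tau$ decouples over the boundary components into the gauge-matching equations $(\varphi^{(0)}|_{B_i})\cdot(\phi|_{\partial V_i})=\tau_i$, where $\varphi^{(0)}$ is the reference realizing $f$ and $\partial V_i$ is the vertex set of $B_i$. On each connected component $B_i$ this equation is solvable iff $\varphi^{(0)}|_{B_i}$ and $\tau_i$ have conjugate based holonomies, which is exactly guaranteed by the admissibility of $f$; and, when solvable, its solution set is a torsor under the gauge-stabilizer of $\tau_i$. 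A propagation argument along paths emanating from $b_i$ identifies this stabilizer with $C_G(\mathrm{Im}(\tau_i)_*)$ via evaluation at $b_i$, giving precisely $|C_G(\mathrm{Im}(\tau_i)_*)|$ admissible values of $\phi|_{\partial V_i}$. The gauge values at the interior vertices remain free and contribute the factor $|G|^{v-\partial v}$; collecting these factors over the $l$ components and summing over the admissible $f$ produces the asserted identity.

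The step I expect to be the main obstacle is the exact bookkeeping that yields the exponent of $|G|$: one must carefully separate the $\partial v$ boundary vertices, where the centralizer stabilizers live, from the interior vertices, which are free, while keeping track of the single base-point normalization $\phi(b)=1$ inherited from Lemma~\ref{DW:Lem1}. The only non-formal ingredient is the solvability-and-torsor claim on each $B_i$, which is just the holonomy-versus-gauge dictionary of Lemma~\ref{DW:Lem1} applied to the connected pieces $B_i$ in place of $M$; granting that, the remainder is a product of independent counts.
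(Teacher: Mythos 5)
You take essentially the same route as the paper's own proof: partition $\text{Col}(M,\tau)$ according to the based holonomy $f$, prove necessity of the conjugation condition by transporting the base point along the $\gamma_i$, establish existence of a coloring over each admissible $f$, and count the fiber through the vertex-gauge action, with the boundary contribution being the stabilizer of each $\tau_i$, identified with $C_G(\mathrm{Im}(\tau_i)_*)$ by propagation from $b_i$. Your write-up is, if anything, tighter than the paper's: the decoupling over the components $B_i$ and the solvability-iff-conjugate-holonomy criterion are exactly what the paper does implicitly, and your description of the stabilizer (determined by its value at $b_i$ via propagation) is more accurate than the paper's assertion that the remaining vertices of $B_i$ must be colored by $1$.

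The genuine problem sits exactly at the step you flagged, and it cannot be repaired. The torsor statement you import from Lemma \ref{DW:Lem1} is valid only under the normalization $\phi(b)=1$; without it, gauging at $b$ conjugates the based holonomy, so the gauge group does not even act on the fiber over a fixed $f$. Yet your final count treats all $v-\partial v$ interior vertices as free, contradicting that normalization. Carried out consistently (with $b$ an interior vertex), your argument yields $|G|^{v-\partial v-1}\prod_{i=1}^l|C_G(\mathrm{Im}(\tau_i)_*)|$ colorings for each admissible $f$, one factor of $|G|$ short of the stated formula. This discrepancy is not your fault: the formula as printed is itself off by a factor of $|G|$. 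For the disk triangulated as a cone on a triangle ($v=4$, $\partial v=3$, trivial $\pi_1$), a boundary coloring with trivial holonomy extends in exactly $|G|$ ways, whereas the stated formula predicts $|G|^{1}\cdot|G|\cdot 1=|G|^2$; likewise, summing the stated formula over all $\tau$ fails to reproduce Lemma \ref{DW:Lem1}. The paper's own proof commits the identical slip (it counts all interior vertices, including $b$, as free while fixing the holonomy to be exactly $f$), and the paper's subsequent evaluation $Z(\mathrm{Id}_Y)(\tau^{i,j}_k)(\tau^{i',j'}_{k'})=\frac{1}{|G|^v}|C_G(\mathrm{Im}(\tau^{i,j}_k)_*)|$ is consistent only with the corrected exponent $v-\partial v-1$. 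So your proposal reproduces the paper's argument, including its one real defect; a consistent version of your count proves the lemma with $|G|^{v-\partial v-1}$ in place of $|G|^{v-\partial v}$, which is the version the rest of the paper actually uses.
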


\begin{proof}
Any $\varphi\in\text{Col}(M,\tau)$ leads to  $(\varphi)_*(\gamma_i)_{\ast}(\iota_i)_{\ast}=g_i(\tau_i)_{\ast}g^{-1},$ where $g_i$ colors the path $\gamma_i$. Thus  $\varphi_*(\gamma_i)_{\ast}(\iota_i)_{\ast}\sim(\tau_i)_{\ast}$. 

When $$\{f\in\text{Hom}(\pi_1(M,x),G)|f(\gamma_i)_{\ast}(\iota_i)_{\ast}\sim(\tau_i)_{\ast}\}$$ is empty, there are no colorings in Col$(M,\tau)$.
When there exists $f\in\{f\in\text{Hom}(\pi_1(M,x),G)|f(\gamma_i)_{\ast}(\iota_i)_{\ast}\sim(\tau_i)_{\ast}\}$, there exists a coloring $\varphi$ of $M$ whose holonomy representations on $M,B_i$ are $f,h_i(\tau_i)_*h_i^{-1}$, respectively. By coloring the base point $x_i$ of $B_i$ with $h_i$, the method in Lemma \ref{DW:Lem1} can be used to modify $\varphi$ so that its holonomy representations on $M,B_i$ are $f,(\tau_i)_{\ast}$, respectively. Then we follow the same method by coloring vertices of $\partial M$ to find a coloring of $M$ whose restriction on $\partial M$ is $\tau$, which is still denoted by $\varphi$. By the same argument in Lemma \ref{DW:Lem1}, we see that all the colorings in Col$(M,\tau)$ whose holonomy representation is $f$ is a modification of $\varphi$ by coloring the vertices.

Next we count the colorings in Col$(M,\tau)$ whose holonomy representation is $f$. Since the restriction of colorings on $B_i$ is fixed, only the elements in $C_G(Im(\tau_i)_*)$ can be used to color the base point $x_i$ of $B_i$ and $1$ to color the other vertices in $B_i$. There are $|G|^{v-\partial}\prod_{i=1}^l|C_G({Im(\tau_i)_*})|$ colorings for vertices of $M$, and the proof is completed.
\end{proof}

To construct the $(n+1)$-DW TQFT $(Z_G,V_G)$,  for any oriented closed triangulated $n$-manifold $Y$, we first define a vector space $\widetilde{V}(Y)$ to be $\mathbb{C}[\text{Col}(Y)]$---the vector space spanned by colorings.
Given a bordism $\zeta=(X,Y_1,Y_2,f_1,f_2)$, where $X$ is an oriented compact $(n+1)$-manifold with boundary $\partial X$, and $Y_1,Y_2$ are oriented closed triangulated $n$-manifolds with an orientation-preserving diffeomorphism:
$$f_1\sqcup f_2:Y_1\sqcup {-Y_2}\longrightarrow\partial X.
$$

Let $Z(\zeta):\widetilde{V}(Y_1)\longrightarrow \widetilde{V}(Y_2)$ be
$$Z(\zeta)(\tau)=\sum_{\mu\in\text{Col}(Y_2)}Z(X,\tau\sqcup\mu)\mu,
$$
where $\tau\in\text{Col}(Y_1)$ and $\tau\sqcup\mu$ is the coloring of $\partial X$ induced by $f_1\sqcup f_2$. $Z(\zeta)$ is well-defined by Prop. \ref{DW:Prop2}.

The proof of the following is straightforward.
\begin{mylem}
$Z(\zeta\eta)=Z(\zeta)Z(\eta)$ for any two bordisms $\zeta,\eta$.
\end{mylem}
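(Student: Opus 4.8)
The plan is to establish functoriality by reducing, on basis vectors, to a multiplicative gluing formula for the coloring count. I would write $\eta=(X_\eta,Y_0,Y_1,\dots)$ and $\zeta=(X_\zeta,Y_1,Y_2,\dots)$, so that the composite $\zeta\eta$ is carried by the glued manifold $X=X_\zeta\cup_{Y_1}X_\eta$. Fixing a basis coloring $\tau\in\text{Col}(Y_0)$ and unwinding the definition $Z(\zeta)(\nu)=\sum_\mu Z(X_\zeta,\nu\sqcup\mu)\mu$ twice, one gets
\[
Z(\zeta)Z(\eta)(\tau)=\sum_{\mu\in\text{Col}(Y_2)}\Bigl(\sum_{\nu\in\text{Col}(Y_1)}Z(X_\eta,\tau\sqcup\nu)\,Z(X_\zeta,\nu\sqcup\mu)\Bigr)\mu,
\]
whereas $Z(\zeta\eta)(\tau)=\sum_{\mu}Z(X,\tau\sqcup\mu)\mu$. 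Comparing the coefficient of each $\mu$, the lemma becomes the single identity
\[
Z(X,\tau\sqcup\mu)=\sum_{\nu\in\text{Col}(Y_1)}Z(X_\eta,\tau\sqcup\nu)\,Z(X_\zeta,\nu\sqcup\mu).
\]

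The combinatorial core is a gluing bijection for colorings. By Prop.~\ref{DW:Prop2} the quantities $Z(X,\cdot)$, $Z(X_\eta,\cdot)$, $Z(X_\zeta,\cdot)$ are independent of the chosen extending triangulations, so I would pick triangulations of $X_\eta$ and $X_\zeta$ that restrict to a common triangulation of the interface $Y_1$; their union then triangulates $X$. Each edge of $X$ lies in exactly one of $X_\eta$, $X_\zeta$, or the shared copy of $Y_1$, so a coloring of $X$ is exactly a pair of colorings of $X_\eta$ and $X_\zeta$ whose restrictions to $Y_1$ agree, the orientation reversal on the glued faces being absorbed into the equivalence relation $g\leftrightarrow g^{-1}$ defining colorings. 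Grouping the colorings of $X$ according to their restriction $\nu$ to $Y_1$ gives
\[
\sharp\text{Col}(X,\tau\sqcup\mu)=\sum_{\nu\in\text{Col}(Y_1)}\sharp\text{Col}(X_\eta,\tau\sqcup\nu)\,\sharp\text{Col}(X_\zeta,\nu\sqcup\mu).
\]

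Finally I would verify that the normalizing powers of $|G|$ match term by term. With $v_\eta,v_\zeta$ the vertex counts of $X_\eta,X_\zeta$ and $v_0,v_1,v_2$ those of $Y_0,Y_1,Y_2$, gluing identifies the $v_1$ interface vertices, so $X$ has $v_\eta+v_\zeta-v_1$ vertices and $v_0+v_2$ boundary vertices, while $\partial X_\eta$ has $v_0+v_1$ and $\partial X_\zeta$ has $v_1+v_2$. Both the exponent $\tfrac{v_0+v_2}{2}-(v_\eta+v_\zeta-v_1)$ appearing in $Z(X,\tau\sqcup\mu)$ and the combined exponent $\bigl(\tfrac{v_0+v_1}{2}-v_\eta\bigr)+\bigl(\tfrac{v_1+v_2}{2}-v_\zeta\bigr)$ from the product equal $\tfrac{v_0+v_2}{2}-v_\eta-v_\zeta+v_1$, so the prefactors coincide; combined with the coloring gluing formula this yields the desired identity. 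I expect this last exponent bookkeeping to be the only delicate point: the $v_1$ interface vertices are counted once in $X$ but enter the boundary normalization of each of the two pieces, and it is precisely the resulting $+v_1$ surplus that reconciles the single manifold with the product of the two partition functions.
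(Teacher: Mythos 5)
Your proof is correct, and it is precisely the ``straightforward'' argument the paper alludes to but omits (the paper gives no proof, saying only that it is straightforward): unwind the definition to reduce the lemma to the coefficient identity $Z(X,\tau\sqcup\mu)=\sum_{\nu\in\text{Col}(Y_1)}Z(X_\eta,\tau\sqcup\nu)\,Z(X_\zeta,\nu\sqcup\mu)$, establish it via the gluing bijection on colorings for triangulations agreeing on the interface, and match the normalizations. Your exponent bookkeeping is the key check and it is right: $\bigl(\tfrac{v_0+v_1}{2}-v_\eta\bigr)+\bigl(\tfrac{v_1+v_2}{2}-v_\zeta\bigr)=\tfrac{v_0+v_2}{2}-(v_\eta+v_\zeta-v_1)$, so the $|G|$-prefactors coincide term by term.
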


Thus $Z(Id_{Y})$, $Id_{Y}=(Y\times I,Y\times{0},Y\times{1},id,id))$, is an idempotent from $\widetilde{V}(Y)$ to itself. The Hilbert space $V_G(Y)$ is then defined to be the image of $Z(Id_Y)$. The restriction of $Z(\zeta)$, $\zeta=(X,Y_1,Y_2,f_1,f_2))$ to $Z(Id_{Y_i}),i=1,2$, defines a map from $V_G(Y_1)$ to $V_G(Y_2)$. Furthermore, equivalent bordisms define the same linear map, in other words, $Z(\zeta)$ is defined on the equivalence class of bordisms. 

The DW-TQFT $(Z_G,V_G)$ defined above leads to a natural representation of the mapping class group $\mcM(Y)$ on the Hilbert space $V_G(Y)$ as follows:
any $[f]\in \mcM(Y)$ gives rise to a bordism $\zeta_f=(Y\times I,Y\times{0},Y\times{1},f,id)$. Then the linear transformation $Z(\zeta_f)$ from $V_G(Y)$ to itself defines a representation  $\rho:\mcM(Y)\longrightarrow GL(V(Y))$ by $\rho([f])=Z(\zeta_f)$.

\begin{mypro}
There exists a basis $\{e_k\}$ for $V_G(Y)$ in $1-1$ correspondence with Hom$(\pi_1(Y),G)/\sim$ such that the action of $[f]\in \mcM(Y)$ on $V_G(Y)$ is the permutation of $\{e_k\}$ by pre-composing $f: \pi_1(Y) \rightarrow \pi_1(Y)$. Thus, the following diagram commutes:
$$\xymatrix{
    \mcM(Y)\ar[rr]\ar[dr]^{\rho} & & S_N \ar[dl]\\
   & GL(V_G(Y)), &
   }
$$
where $N=$dim$(V_G(Y))$.  

The representation is always reducible as the conjugacy class of the trivial homomorphism is fixed by $\mcM(Y)$ by pre-composition. Thus one element in $\{e_k\}$ is always fixed by $\mcM(Y)$. Hence $N$ can be reduced to $N-1$=dim$(V_G(Y))$-1.
\end{mypro}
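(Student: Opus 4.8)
The plan is to compute the cylinder idempotent $Z(Id_Y)$ explicitly on the spanning set of colorings and read off its image. First I would equip $\widetilde{V}(Y)=\mbbC[\text{Col}(Y)]$ with a grading by conjugacy classes of holonomy: every coloring $\tau$ determines a homomorphism $\tau_*\colon\pi_1(Y,x)\to G$, and I group the colorings according to the class $[\tau_*]\in\Hom(\pi_1(Y),G)/\sim$. For each class $[\rho]$ set $e_{[\rho]}=\sum_{\mu_*\sim\rho}\mu$. The goal of the first half is to show that $Z(Id_Y)$ preserves this grading and collapses each graded piece onto the single line $\mbbC\,e_{[\rho]}$, so that its image is spanned by the $e_{[\rho]}$.

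To carry this out I would apply Lemma~\ref{DW:Lem2} to the cylinder $M=Y\times I$, whose two boundary inclusions $Y\times\{0\},Y\times\{1\}\hookrightarrow Y\times I$ both induce isomorphisms on $\pi_1$. For fixed boundary colorings $\tau,\mu$ the lemma counts $\text{Col}(Y\times I,\tau\sqcup\mu)$ in terms of homomorphisms $f$ of $\pi_1(Y\times I)$ restricting up to conjugacy to $\tau_*$ and to $\mu_*$; since both restrictions are identifications of $\pi_1$, such an $f$ exists only when $\tau_*\sim\mu_*$, and then the count is governed by the size of the conjugacy class of $\tau_*$ together with the centralizer factors $|C_G(\im\tau_*)|,|C_G(\im\mu_*)|$. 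Choosing a prism triangulation of the cylinder with no interior vertices (legitimate by Prop.~\ref{DW:Prop2}) and feeding this into the partition function, I expect to obtain $Z(Id_Y)(\tau)=c_{[\tau_*]}\,e_{[\tau_*]}$ for an explicit positive scalar $c_{[\tau_*]}$ depending only on the class (the centralizer orders are constant on a conjugacy class, so the coefficient really is the same for every $\tau$ in the class). This exhibits the image of $Z(Id_Y)$ as the span of the $\{e_{[\rho]}\}$, one per conjugacy class; these are linearly independent since they are supported on disjoint sets of colorings. Hence $\{e_{[\rho]}\}$ is a basis in bijection with $\Hom(\pi_1(Y),G)/\sim$ and $N=|\Hom(\pi_1(Y),G)/\sim|$.

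For the mapping class group action I would analyse the bordism $\zeta_f=(Y\times I,\,Y\times\{0\},\,Y\times\{1\},\,f,\,\id)$ attached to $[f]\in\mcM(Y)$ in exactly the same way. The only change from the cylinder computation is that one end is glued by $f$, so a coloring with cylinder holonomy $\sigma$ now restricts to holonomy $\sigma\circ f_*$ on the $f$-end and to $\sigma$ on the other end. Tracking these restrictions shows $Z(\zeta_f)$ sends the graded piece indexed by $[\rho]$ to the one indexed by $[\rho\circ f_*^{-1}]$, i.e. $Z(\zeta_f)(e_{[\rho]})=e_{[\rho\circ f_*^{-1}]}$ after the same normalisation (the inversion is the bookkeeping artifact that turns the pullback into a genuine left action, as one checks from $(f_1f_2)_*=(f_1)_*(f_2)_*$). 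Since $f_*$ is an automorphism of $\pi_1(Y)$ well defined up to inner automorphisms, pre-composition is a well-defined permutation of $\Hom(\pi_1(Y),G)/\sim$, and $\rho([f])$ is precisely that permutation matrix; this is the asserted commuting triangle with $S_N$.

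Finally, reducibility is immediate once the permutation description is in place: the trivial homomorphism $\pi_1(Y)\to G$ is a one-element conjugacy class fixed by every pre-composition $[\rho]\mapsto[\rho\circ f_*^{-1}]$, so $e_{[1]}$ spans a line fixed by all of $\mcM(Y)$, the representation splits off this trivial summand, and the remaining action lives on a complementary $(N-1)$-dimensional subspace. The step I expect to be the main obstacle is the middle one: verifying that the centralizer factors produced by Lemma~\ref{DW:Lem2} cancel so that $Z(Id_Y)$ is \emph{rank one} on each graded piece (projecting onto $e_{[\rho]}$ alone, not onto a larger symmetric subspace), and pinning down the normalisation so that the resulting operators are honest idempotents and honest permutations rather than scalar multiples thereof.
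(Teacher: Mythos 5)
Your proposal is correct and follows essentially the same route as the paper's proof: the paper likewise applies Lemma~\ref{DW:Lem2} to a triangulated $Y\times I$ to show that $Z(Id_Y)$ is block-diagonal over conjugacy classes of holonomy with each block of rank one and constant entries $\frac{1}{|G|^v}|C_G(\mathrm{Im}\,\tau_*)|$, takes the resulting class sums as the basis $\{e_k\}$ of the image (these are your unweighted $e_{[\rho]}$ up to a scalar that is constant on each class), and then runs the identical count for $Z(\zeta_f)$ to get $Z(\zeta_f)(e_k)=e_{[f](k)}$ with $[f]([\phi])=[\phi f_*^{-1}]$, plus the same observation about the trivial class for reducibility. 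The normalization obstacle you flag resolves exactly as you predict: since $f_*$ is an automorphism, $\mathrm{Im}(\phi f_*^{-1})=\mathrm{Im}(\phi)$, so the centralizer factors are equal on corresponding classes and the combination of per-class constants with the class sizes cancels (equivalently, it is forced by $Z(\zeta\eta)=Z(\zeta)Z(\eta)$ applied to $Id_Y\circ Id_Y$ and $\zeta_f\circ\zeta_{f^{-1}}$), yielding honest idempotents and honest permutations rather than scalar multiples.
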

\begin{proof}
Let $v$ be the number of vertices of $Y$.
By Lemma \ref{DW:Lem1}, dim$V_G(Y)=|G|^{v-1}\sharp\text{Hom}(\pi_1(Y),G)$ and a basis for $V_G(Y)$ is given as follows.
$$\tau^{i,j}_k\in\text{Col}(Y),
$$
where $k$ indexes the conjugacy class of $(\tau^{i,j}_k)_*$ in Hom$(\pi_1(Y),G)/\sim$, $j$ indexes the homomorphism $(\tau^{i,j}_k)_*$ in the conjugacy class $k$, $i$ indexes the coloring $\tau^{i,j}_k$ corresponding to the homomorphism $j$.

To compute $Z(Id_Y):V(Y)\longrightarrow V(Y)$ under this basis,
we triangulate $Y\times I$ so that the restriction to boundaries $Y\times{0}$ and $Y\times{1}$ is the one on $Y$. The number of vertices of the boundary of $Y\times I$ is $2v$. Let $a$ be the number of vertices of $Y\times I$. Lemma \ref{DW:Lem2} implies 
\begin{align*}
Z(Id_Y)(\tau^{i,j}_k)(\tau^{i^{\prime},j^{\prime}}_{k^{\prime}})
&=\frac{|G|^v}{|G|^a}\sharp\text{Col}(Y\times I,\tau^{i,j}_k\sqcup\tau^{i^{\prime},j^{\prime}}_{k^{\prime}})
\\
&=\left\{\begin{aligned}
\frac{1}{|G|^v}|C_G({Im(\tau^{i,j}_k)_*})|&&k=k^{\prime} \\
0&&k\neq k^{\prime}
\end{aligned}
\right.
\end{align*}
where $C_G({Im(\tau^{i,j}_k)_*})$ is the centralizer of $Im(\tau^{i,j}_k)_*$ in $G$.

It follows that the transformation matrix for $Z(Id_Y)$ is a block-diagonalized matrix with each block labelled by the conjugation class $k$ in Hom$(\pi_1(Y),G)/\sim$, and in each block, all the entries are the same number $\frac{1}{|G|^v}|C_G({Im(\tau^{i,j}_k)_*})|$. Thus we can find a basis $\{e_k\}$ for $V_G(Y)=Im(Z(Id_Y))$ by
$$e_k=\sum_{i,j}\frac{1}{|G|^v}|C_G({Im(\tau^{i,j}_k)_*})|\tau^{i,j}_k
$$
where $k$ labels the conjugation class in Hom$(\pi_1(Y),G)/\sim$.

To compute the action of $[f]\in \mcM(Y)$ on $V_G(Y)$, we define an action of $[f]$ on the set of conjugacy class of Hom$(\pi_1(Y),G)$ first. For any $[\phi]\in\text{Hom}(\pi_1(Y),G)/\sim$, $[f]([\phi])$ is defined to be $[\phi f_*^{-1}]$.

A calculation similar to the above results in  
\begin{align*}
Z(\zeta_{[f]}=(Y\times I,Y\times{0},Y\times{1},f,id))(\tau^{i,j}_k)(\tau^{i^{\prime},j^{\prime}}_{k^{\prime}})
\\
=\left\{
\begin{aligned}
\frac{1}{|G|^v}|C_G({Im(\tau^{i^{\prime},j^{\prime}}_{k^{\prime}})_*})|&&[f](k)=k^{\prime} \\
0&&[f](k)\neq k^{\prime}
\end{aligned}
\right.
\end{align*}
We obtain 
\begin{align*}
Z(\zeta_{[f]})(e_k)&=Z(\zeta_{[f]})(\sum_{i,j}\frac{1}{|G|^v}|C_G({Im(\tau^{i,j}_k)_*})|\tau^{i,j}_k)
\\
&=\sum_{i,j}\frac{1}{|G|^v}|C_G({Im(\tau^{i,j}_k)_*})|Z(\zeta_{[f]})(\tau^{i,j}_k)
\\
&=\sum_{i,j}\frac{1}{|G|^v}|C_G({Im(\tau^{i,j}_k)_*})|\sum_{i^{\prime},j^{\prime}}\frac{1}{|G|^v}|C_G({Im(\tau^{i^{\prime},j^{\prime}}_{[f](k)})_*})|\tau^{i^{\prime},j^{\prime}}_{[f](k)}
\\
&=\sum_{i^{\prime},j^{\prime}}\frac{1}{|G|^v}|C_G({Im(\tau^{i^{\prime},j^{\prime}}_{[f](k)})_*})|\frac{|G|^{v-1}}{|G|^v}[G:Z_{Im(\tau^{i,j}_k)_*}]|C_G({Im(\tau^{i,j}_k)_*})|\tau^{i^{\prime},j^{\prime}}_{k^{\prime}}
\\
&=\sum_{i^{\prime},j^{\prime}}\frac{1}{|G|^v}|C_G({Im(\tau^{i^{\prime},j^{\prime}}_{[f](k)})_*})|\tau^{i^{\prime},j^{\prime}}_{[f](k)}
\\
&=e_{[f](k)}
\end{align*}
\end{proof}
\noindent

Suppose $\rho,\rho_{DR}$ are the representations of $\mcM(Y\times \mbbS^1)$ and $\mcM(Y)$ from a TQFT $(Z,V)$ and its DR  $(Z_{DR},V_{DR})$, respectively, then the following diagram commutes:
$$\xymatrix{
    \mcM(Y)\ar[rr]^{i}\ar[dr]^{\rho_{DR}} & & \mcM(Y\times \mbbS^1) \ar[dl]^{\rho}\\
   & GL(V(Y\times \mbbS^1)),&
   }
$$
where $i$ is an inclusion: for any $[f]\in \mcM(Y)$, $i{[f]}=[f\times id]$, where $f\times id: Y\times S^1\longrightarrow Y\times \mbbS^1$ by $f\times id(x,y)=(f(x),y)$.

Apply the above diagram to DW-TQFTs, we obtain
$$\xymatrix{
    \mcM(Y)\ar[rr]^{i}\ar[dr]^{\rho_{DR}^{DW}} & & \mcM(Y\times \mbbS^1) \ar[dl]^{\rho^{DW}}\\
   & GL(\mathbb{C}\text{Hom}(\pi_1(Y\times \mbbS^1),G)/\sim),&
   }
$$
where $\sim$ means conjugacy equivalence. Then 
\begin{align*}
\text{Hom}(\pi_1(Y\times \mbbS^1),G)/\sim
&=\text{Hom}(\pi_1(Y)\times\pi_1(\mbbS^1),G)/\sim
\\
&=\{([\phi],[g])|[g]\text{ is a conjugacy class of }G,
\\
&[\phi]\in\text{Hom}(\pi_1(Y),G)/\sim\}
\\
&=\bigcup_{[g]\text{ conjugacy class}}\text{Hom}(\pi_1(Y),Z_g)/\sim
\end{align*}
\noindent
where $g$ is the image of the generator of second factor $\mathbb{Z}$ of $\pi_1(Y)\times\mathbb{Z}$.
\\
Since the image of $i$ preserves the second factor of $\pi_1(X)\times\mathbb{Z}$ with pre-composition, we obtained the following theorem.

\begin{mythm}\label{DW:ThmDRclosed}
If $Y$ is an oriented closed $(n-1)$-manifold, then there is an intertwining map: 
$$\rho_{as}: \oplus_{[g]\in [G]} V_{C_G(g)}^{\textrm{((n-1)+1)-DW}}(Y)\longrightarrow V_G^{\textrm{(n+1)-DW}}(Y\times S^1)$$
of the representations of the mapping class groups $\mcM(Y)$ and $\mcM(Y\times \mbbS^1)$ from the DW theories associated to $\{C_G(g)\}$ and $G$, respectively. The assembly map $\rho_{as}$ is an isomorphism of the two vector spaces.

More explicitly, for any $f \in \mcM(Y)$ and 
$$\oplus_{[g]\in [G]}v_{[g]} \in \oplus_{[g]\in [G]} V_{C_G(g)}^{\textrm{((n-1)+1)-DW}}(Y),$$ we have the following identity:
$$ \rho_{as}(\oplus_{[g]\in [G]}f^{((n-1)+1)}_{*}v_{[g]})=(f\times \textrm{id})^{(n+1)}_{*} \rho_{as}(\oplus_{[g]\in [G]}v_{[g]}).$$
Note that this identity does not imply that the images of the two representations are necessarily the same.

The subgroup $C_G(g)$ of $G$ is the centralizer of $g$ in $G$ and the summation is over all conjugacy classes $[G]=\{[g]\}$ of $G$.
\end{mythm}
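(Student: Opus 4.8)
The plan is to reduce the statement to the combinatorial model of the preceding Proposition, under which both sides become permutation representations on explicit bases indexed by conjugacy classes of homomorphisms; the assembly map is then forced to be the tautological basis-matching isomorphism.

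First I would record the two basis descriptions. Applying the preceding Proposition to the group $G$ and the closed $n$-manifold $Y\times\mbbS^1$, the space $V_G(Y\times\mbbS^1)$ carries a distinguished basis $\{e_{[\phi]}\}$ indexed by $\Hom(\pi_1(Y\times\mbbS^1),G)/\!\sim$, and the image $i(\mcM(Y))$ acts by permuting these vectors through pre-composition, $(f\times\id)_*\,e_{[\phi]}=e_{[\phi\circ(f\times\id)_*^{-1}]}$. Applying the same Proposition to each centralizer $C_G(g)$ and the $(n-1)$-manifold $Y$, the summand $V_{C_G(g)}(Y)$ has a basis $\{e_{[\psi]}\}$ indexed by $\Hom(\pi_1(Y),C_G(g))/\!\sim$, with $\mcM(Y)$ acting by $f_*\,e_{[\psi]}=e_{[\psi\circ f_*^{-1}]}$.

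Next I would invoke the decomposition established just before the theorem, the natural bijection
\[\Hom(\pi_1(Y\times\mbbS^1),G)/\!\sim\;=\;\bigsqcup_{[g]\in[G]}\Hom(\pi_1(Y),C_G(g))/\!\sim,\]
which records the $G$-conjugacy class $[g]$ of the image of the $\mbbS^1$-generator and, after normalizing that image to a fixed representative $g$, the residual $C_G(g)$-conjugacy class of the restriction $\psi$ to $\pi_1(Y)$. I would then \emph{define} $\rho_{as}$ to be the linear map sending the basis vector $e_{[\psi]}$ of the $[g]$-summand to the basis vector $e_{[\phi]}$ of $V_G(Y\times\mbbS^1)$ attached to the matching class $[\phi]=[(g,\psi)]$. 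Being a bijection of bases, $\rho_{as}$ is at once an isomorphism of vector spaces.

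The intertwining identity should then be automatic rather than computational. Because $f\times\id$ fixes the generator of the $\mbbS^1$-factor pointwise, pre-composition with $(f\times\id)_*^{-1}$ leaves each representative $g$ \emph{unchanged}---not merely up to conjugacy---so it preserves every summand of the decomposition, and on the $\pi_1(Y)$-coordinate it acts precisely by $\psi\mapsto\psi\circ f_*^{-1}$. Matching this against the $\mcM(Y)$-action on $V_{C_G(g)}(Y)$ gives $\rho_{as}(f_*e_{[\psi]})=(f\times\id)_*\rho_{as}(e_{[\psi]})$ on each basis vector, whence the stated identity by linearity. I expect the only genuinely delicate point to be the equivariant well-definedness of this block decomposition: one must verify that fixing a representative $g$ in each class pins down $\psi$ up to exactly $C_G(g)$-conjugacy, and that this normalization is compatible with the $i(\mcM(Y))$-action---which holds precisely because $f\times\id$ acts trivially in the circle direction, so that no re-normalization is ever required and no basis vector migrates between summands.
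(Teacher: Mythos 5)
Your proposal is correct and takes essentially the same route as the paper: the paper likewise combines the permutation-basis Proposition (bases indexed by $\Hom(\pi_1(\cdot),\cdot)/\!\sim$ with $\mcM$ acting by pre-composition), the decomposition $\Hom(\pi_1(Y\times \mbbS^1),G)/\!\sim\;=\;\bigsqcup_{[g]\in[G]}\Hom(\pi_1(Y),C_G(g))/\!\sim$, and the observation that $i([f])=[f\times \mathrm{id}]$ fixes the circle factor, so pre-composition preserves each block. Your only addition is to flag explicitly the well-definedness of the block decomposition (that fixing the representative $g$ determines $\psi$ exactly up to $C_G(g)$-conjugacy), which the paper asserts without comment.
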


The mapping class group of a closed manifold is not a motion group as motion groups are about diffeomorphisms that are connected to the identity, while mapping class groups are about diffeomorphisms modulo those.  The theorem is an instance of our conjecture for 
dimension reduction of DW theories from $(3+1)$ to $(2+1)$: $\textrm{DW}^{3+1}_G \cong \oplus_{[g]\in [G]} \textrm{DW}^{2+1}_{C(g)}$ for mapping class groups.

\subsection{Reduction from the $3$-torus $T^3$ to $T^2$ for $(3+1)$-DW}

In this subsection, $G=\mathbb{Z}_n$, where $n$ is any positive integer.
Those examples serve as explicit illustrations of Thm. \ref{DW:ThmDRclosed}.

First we characterize the images of representations for $Y=T^3,T^2$ in $(3+1)$- and $(2+1)$-DW TQFTs as abstract groups.  It is known that $\mcM(T^3)=SL(3,\mathbb{Z})$ and $\text{Hom}(\pi_1(T^3),\mathbb{Z}_n)/\sim \cong \text{Hom}(\mathbb{Z}^3,\mathbb{Z}_n) \cong \mathbb{Z}_n^3$. For any $(a,b,c)\in\mathbb{Z}_n^3$ and $A\in SL(3,\mathbb{Z})$, $A_{\cdot} (a,b,c)=(a,b,c)A=(a,b,c)p(A)$, where $p$ is the natural map from $SL(3,\mathbb{Z})$ to $SL(3,\mathbb{Z}_n)$ by modulo $n$. Since $p$ is surjective, it follows that $Im\rho^{(3+1)- DW}_{\mathbb{Z}_n,T^3}=SL(3,\mathbb{Z}_n)$. Similarly, $Im\rho^{(2+1)- DW}_{\mathbb{Z}_n,T^2}=SL(2,\mathbb{Z}_n)$.

Next, as images of representations from $(2+1)$ to $(3+1)$ TQFTs, it is known  that $\mcM(T^3)=SL(3,\mathbb{Z})$ is generated by $S_3,T_3$ and $\mcM(T^2)=SL(2,\mathbb{Z})$ is generated by $S_2,T_2$, where
$$T_3=\begin{pmatrix}
1&1&0\\
0&1&0\\
0&0&1
\end{pmatrix}
,
S_3=\begin{pmatrix}
0&0&1\\
1&0&0\\
0&1&0
\end{pmatrix}
,
T_2=\begin{pmatrix}
1&1\\
0&1
\end{pmatrix}
,
S_2=\begin{pmatrix}
0&-1\\
1&0
\end{pmatrix}
.
$$
The three torus $T^3$ has three $S^1$ factors, hence $SL(2,\mathbb{Z})$ can be embdeded into $SL(3,\mathbb{Z})$ in three different ways by mapping $S_2$ to $S_{21},S_{22},S_{23}$ and $T_2$ to $T_{21},T_{22},T_{23}$, where
$$T_{21}=\begin{pmatrix}
1&1&0\\
0&1&0\\
0&0&1
\end{pmatrix}
,
T_{22}=\begin{pmatrix}
1&0&1\\
0&1&0\\
0&0&1
\end{pmatrix}
,
T_{23}=\begin{pmatrix}
1&0&0\\
0&1&1\\
0&0&1
\end{pmatrix}
,
$$
$$S_{21}=\begin{pmatrix}
0&-1&0\\
1&0&0\\
0&0&1
\end{pmatrix}
,
S_{22}=\begin{pmatrix}
0&0&-1\\
0&1&0\\
1&0&0
\end{pmatrix}
,
S_{23}=\begin{pmatrix}
1&0&0\\
0&0&-1\\
0&1&0
\end{pmatrix}
.
$$
The relations $T_3=T_{21}$ and $S_3=S_{21}S_{23}$ imply that the images of the $(2+1)$ TQFT representation by dimension reduction along the first and third factors of $T^3$ generate the image of the representation of $SL(3, \mbbZ)$.

The decomposition representations of $SL(2,p)$ and $SL(3,p)$ from DW TQFTs into irreducibles can be found using character theory, where $p$ is prime.
The character table of $SL(2,p)$ and notation can be found from \cite{humphreys75}, and 
same for $SL(3,p)$ from \cite{simpson73}.  We recall the following character tables from these two references.
\begin{table}  
\begin{center}
\begin{tabular}{|l|l|l|l| p{5cm}|}
\hline  
\ &$\psi$&$\zeta_i$&$\xi_1$&$\xi_2$
\\
\hline
1&$p$&$p+1$&$\frac{1}{2}(p+1)$&$\frac{1}{2}(p+1)$
\\
\hline  
$z$&$p$&$(-1)^i(p+1)$&$\frac{1}{2}e(p+1)$&$\frac{1}{2}e(p+1)$
\\  
\hline
$a^l$&1&$\tau^{il}+\tau^{-il}$&$(-1)^l$&$(-1)^l$
\\
\hline
$b^m$&-1&0&0&0
\\
\hline
$c$&0&1&$\frac{1}{2}(1+\sqrt{ep})$&$\frac{1}{2}(1-\sqrt{ep})$
\\
\hline
$d$&0&1&$\frac{1}{2}(1-\sqrt{ep})$&$\frac{1}{2}(1+\sqrt{ep})$
\\
\hline
\end{tabular} 
\caption{$SL(2,p)$}
\end{center}  
\end{table}
In TABLE 1, $\tau$ is a primitive $(p-1)$-th root of 1 and $e=(-1)^{\frac{p-1}{2}}$. In TABLE 2, $e^{p-1}=1$, $\rho^{p-1}=1$, $\sigma^{p+1}=\rho$, $\tau^{p^2+p+1}=1$ and $\theta^3\neq1$.
\begin{table}  
\begin{center}
\begin{tabular}{|l|l|l| p{5cm}|}
\hline  
\ &$\chi_{p(p+1)}$&$\chi_{p^2+p+1}^{(i)}$
\\
\hline
$C_1^{k_1}$&$p(p+1)$&$(p^2+p+1)\omega^{ik_1}$
\\
\hline  
$C_2^{k_2}$&$p$&$(p+1)\omega^{ik_2}$
\\  
\hline
$C_3^{k_3,l_3}$&0&$\omega^{ik_3}$
\\
\hline
$C_4^{k_4}$&$p+1$&$(p+1)(e^{ik_4}+e^{-2ik_4})$
\\
\hline
$C_5^{k_5}$&1&$e^{ik_5}+e^{-2ik_5}$
\\
\hline
$C_6^{k_6,l_6,m_6}$&2&$e^{ik_6}+e^{il_6}+e^{im_6}$
\\
\hline
$C_7^{k_7}$&0&$e^{ik_7}$
\\
\hline
$C_8^{k_8}$&-1&0
\\
\hline
\end{tabular} 
\caption{$SL(3,p)$}
\end{center}  
\end{table}

\begin{mypro}

Let $\chi^{2+1}$ be the character of the representation of $SL(2,p)$, and  $\chi^{3+1}$ be the character of the representation of $SL(3,p)$ coming from DW theories, respectively.

\begin{enumerate}

\item The representation image of $\mcM(T^3)$ from $(3+1)$-DW-TQFTs is generated by the representation images from $(2+1)$-DW TQFT representations by dimensional reduction along the first and third circles of $T^3$.

\item 
$$\chi^{2+1}=2\textbf{1}_{SL(2,p)}+\psi+2\sum_{i=1}^{\frac{p-3}{2}}\zeta_i+\xi_1+\xi_2.
$$
When $p=2$, the last three summations on the right side vanish. And in this case, $\textbf{1}_{SL(2,p)}+\psi$ is the natural representation of $S_3$ on the 3-dimensional space by permutation.

\item $$\chi^{3+1}=2\chi_1+\chi_{p(p+1)}+\sum_{i=1}^{p-2}\chi_{p^2+p+1}^{(i)}.
$$
When $p=2$, the last summation vanishes.
Here 
\end{enumerate}
\end{mypro}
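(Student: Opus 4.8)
The plan is to treat all three statements through the observation, established in the Proposition on $V_G(Y)$ immediately preceding Theorem \ref{DW:ThmDRclosed}, that the DW representation on $V_{\mathbb{Z}_n}(Y)$ is a \emph{permutation} representation: it permutes the basis $\{e_k\}$ indexed by $\Hom(\pi_1(Y),\mathbb{Z}_n)/\!\sim$ by the rule $[f]([\phi])=[\phi f_*^{-1}]$. For $Y=T^2,T^3$ this is the linear permutation action of $SL(2,\mathbb{Z})$ (resp. $SL(3,\mathbb{Z})$) on $\mathbb{Z}_n^{2}$ (resp. $\mathbb{Z}_n^{3}$), which factors through $SL(m,\mathbb{Z}_n)$ and is faithful there; taking $n=p$ prime gives representations of $SL(2,p)$ and $SL(3,p)$. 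Since the character of a permutation representation is its fixed-point count, for $A\in SL(m,p)$ one has $\chi(A)=\#\{v\in\mathbb{F}_p^{\,m}:vA=v\}=p^{\,m-\rk(A-I)}$, and this value is insensitive to the conventions $A\mapsto A^{-1},A^{\mathrm T}$ since they preserve $\rk(A-I)$. Thus the whole computation reduces to reading off $\rk(A-I)$ on each conjugacy class of Tables 1 and 2 and then extracting multiplicities by orthogonality.

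For part (1) I would invoke $\mcM(T^3)=SL(3,\mathbb{Z})=\langle S_3,T_3\rangle$ together with the two relations recorded just above the Proposition, $T_3=T_{21}$ and $S_3=S_{21}S_{23}$. Dimensional reduction along the $i$-th circle of $T^3$ produces, by Theorem \ref{DW:ThmDRclosed} with $Y=T^2$, the embedded copy $\langle S_{2i},T_{2i}\rangle$ acting through the $(2+1)$-DW theory. Because $S_3$ and $T_3$ both lie in $\langle S_{21},T_{21},S_{23},T_{23}\rangle$, the image of the $(3+1)$-representation is generated by the images of the reductions along the first and third circles; faithfulness of the permutation action identifies this image with $SL(3,\mathbb{Z}_n)$, which is the assertion.

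For parts (2) and (3) I would evaluate the fixed-point character on the class lists of the two tables. The only classes carrying a nonzero fixed vector are those admitting eigenvalue $1$: for $SL(2,p)$ this is the identity ($\chi^{2+1}=p^2$) and the two unipotent classes $c,d$ ($\rk(A-I)=1$, so $\chi^{2+1}=p$), while $z=-I$ and the semisimple classes $a^l,b^m$ have $A-I$ invertible and give $\chi^{2+1}=1$; for $SL(3,p)$ one runs through $C_1,\dots,C_8$ and records $p^{\,3-\rk(A-I)}\in\{p^3,p^2,p,1\}$ by Jordan type. A clean way to pin down the trivial multiplicity is to use that $SL(m,p)$ is transitive on $\mathbb{F}_p^{\,m}\setminus\{0\}$, whence $\mathbb{C}[\mathbb{F}_p^{\,m}]\cong\mathbf{1}\oplus\Ind_{P}^{G}\mathbf{1}$ with $P$ the stabiliser of a nonzero vector and $\langle\chi,\mathbf{1}\rangle=1+\langle\Ind_P^G\mathbf{1},\mathbf{1}\rangle=2$. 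The remaining multiplicities follow from the inner products $\langle\chi^{2+1},\psi\rangle,\langle\chi^{2+1},\zeta_i\rangle,\langle\chi^{2+1},\xi_{1,2}\rangle$ and their $SL(3,p)$ analogues, computed from the tabulated values and class sizes; the dimension checks $2+p+(p-3)(p+1)+(p+1)=p^2$ and $2+(p^2+p)+(p-2)(p^2+p+1)=p^3$ confirm the totals, while the absence of further constituents is verified by matching $\langle\chi,\chi\rangle$ (the number of $G$-orbits on ordered pairs of vectors) against the sum of squared multiplicities. For $p=2$ the identification $SL(2,2)\cong S_3$ shows directly that $\mathbf{1}+\psi$ is the $3$-point permutation representation.

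The main obstacle is the bookkeeping in the $SL(3,p)$ case: the classes $C_3^{k_3,l_3}$, $C_6^{k_6,l_6,m_6}$ and their neighbours are multiply parametrised, so computing $\langle\chi^{3+1},\chi^{(i)}_{p^2+p+1}\rangle$ requires the correct class sizes and the summation of the root-of-unity entries $\omega^{ik},e^{ik}$ over each parameter range. Already in Table 1 the irrational Gauss-sum entries $\tfrac12(1\pm\sqrt{ep})$ of $\xi_1,\xi_2$ on the unipotent classes $c,d$ must be handled; here they cancel in $\xi_1+\xi_2$, and Galois conjugacy forces $\xi_1,\xi_2$ to occur with equal multiplicity, which is what makes the stated multiplicity $1$ for each come out cleanly. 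Once the rank computations are organised by Jordan type the remainder is routine but lengthy, and the entire conceptual content sits in the fixed-point formula $\chi(A)=p^{\,m-\rk(A-I)}$.
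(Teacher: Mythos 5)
Your proposal is correct, and its core coincides with the paper's: both proofs rest on the observation that the DW representation of $SL(m,p)$ is the permutation representation on $\Hom(\pi_1(T^m),\mathbb{Z}_p)\cong\mathbb{F}_p^m$, so its character is the fixed-point count $\chi(A)=p^{\,m-\rk(A-I)}$, and part (1) is handled identically via the relations $T_3=T_{21}$ and $S_3=S_{21}S_{23}$. Where you diverge is in how the decomposition into irreducibles is established. The paper simply evaluates the claimed right-hand sides of (2) and (3) on each conjugacy class using Tables 1 and 2 and checks pointwise agreement with the values $p^D$; no class sizes, inner products, or orbit counts appear in its argument. You instead \emph{derive} the multiplicities: the trivial multiplicity is $2$ because $SL(m,p)$ has exactly two orbits on $\mathbb{F}_p^m$ (Burnside plus transitivity on nonzero vectors), the remaining multiplicities come from orthogonality against the tabulated characters, and completeness is certified both by the dimension count and by matching $\langle\chi,\chi\rangle$ with the number of orbits on ordered pairs of vectors (for $SL(2,p)$ this is $2p+1$, consistent with $4+1+4\cdot\tfrac{p-3}{2}+1+1$). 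Your route buys more: it would discover the decomposition even if it had not been guessed in advance, the Galois-conjugacy remark explains why $\xi_1$ and $\xi_2$ must occur with equal multiplicity, and the two consistency checks guard against errors in the tables. The cost is that the orthogonality computations need the conjugacy class sizes of $SL(2,p)$ and $SL(3,p)$, which the paper's truncated tables do not record, so your bookkeeping (especially over the multiply parametrised classes $C_3^{k_3,l_3}$ and $C_6^{k_6,l_6,m_6}$) is heavier than the paper's direct class-by-class comparison.
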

\begin{proof}
When $p>2$, $SL(2,p)$ has $p+4$ conjugation classes: \textbf{1},$z$,$a^l$,$b^m$,$c$,$d$,$zc$,$zd$, where
$$\textbf{1}=\begin{pmatrix}
1&0\\
0&1
\end{pmatrix}
,
z=\begin{pmatrix}
-1&0\\
0&-1
\end{pmatrix}
,
a=\begin{pmatrix}
v&0\\
0&v^{-1}
\end{pmatrix}
,
c=\begin{pmatrix}
1&1\\
0&1
\end{pmatrix}
,
d=\begin{pmatrix}
1&v\\
0&1
\end{pmatrix}
,
$$
and $b$ is the element of order $(p+1)$, which is not diagonalizable over $\mathbb{Z}_p$ and $v$ generates the multiplicative group of $\mathbb{Z}_p$ and $1\leq l\leq\frac{p-3}{2}$ and $1\leq m\leq\frac{p-1}{2}$.

For any conjugation class $[g]$ in $SL(2,p)$, 
$$\chi^{2+1}([g])=\sharp\{\text{elements in }\mathbb{Z}_p^2\text{ fixed by }g\}=p^D,
$$
where $D=\text{dimension of eigenspace of g for eigenvalue 1}$.
Thus 
\begin{align*}
\chi^{2+1}&(\textbf{1})=p^2\\
\chi^{2+1}&(c,d)=p\\
\chi^{2+1}&(z,a^l,b^m,zc,zd)=1
\end{align*}
If $\chi^{2+1}([g])=\text{RHS}([g])$, then the proof completes, which follows from straightforward check using the character table of $SL(2,p)$.

$SL(3,p)$ has $p^2+p$ conjugation classes:
$$C_1^{(k_1)}=\begin{pmatrix}
\omega^{k_1}&0&0\\
0&\omega^{k_1}&0\\
0&0&\omega^{k_1}
\end{pmatrix}
,
C_2^{(k_2)}=\begin{pmatrix}
\omega^{k_2}&0&0\\
1&\omega^{k_2}&0\\
0&0&\omega^{k_2}
\end{pmatrix}
,
$$
$$
C_3^{(k_3,l_3)}=\begin{pmatrix}
\omega^{k_3}&0&0\\
\theta^{l_3}&\omega^{k_3}&0\\
0&\theta^{l_3}&\omega^{k_3}
\end{pmatrix}
,
C_4^{(k_4)}=\begin{pmatrix}
\rho^{k_4}&0&0\\
0&\rho^{k_4}&0\\
0&0&\rho^{-2k_4}
\end{pmatrix}
,
$$
$$
C_5^{(k_5)}=\begin{pmatrix}
\rho^{k_5}&0&0\\
1&\rho^{k_5}&0\\
0&0&\rho^{-2k_5}
\end{pmatrix}
,
C_6^{(k_6,l_6,m_6)}=\begin{pmatrix}
\rho^{k_6}&0&0\\
0&\rho^{l_6}&0\\
0&0&\rho^{m_6}
\end{pmatrix}
,
$$
$$C_7^{(k_7)}=\begin{pmatrix}
\rho^{k_7}&0&0\\
0&\sigma^{-k_7}&0\\
0&0&\sigma^{-pk_7}
\end{pmatrix}
,
C_8^{(k_8)}=\begin{pmatrix}
\tau^{k_8}&0&0\\
0&\tau^{pk_8}&0\\
0&0&\tau^{p^2k_8}
\end{pmatrix}
.
$$

For any conjugation class $[g]$ in $SL(3,p)$, 
$$\chi^{3+1}([g])=\sharp\{\text{elements in }\mathbb{Z}_p^3\text{ fixed by }g\}=p^D
$$
where $D=\text{dimension of eigenspace of g for eigenvalue 1}.$
Thus 
\begin{align*}
\chi^{3+1}&(C_1^{(0)})=p^3\\
\chi^{3+1}&(C_2^{(0)})=p^2\\
\chi^{3+1}&(C_3^{(0,l_3)},C_{4,5}^{(\frac{p-1}{2})},C_6^{(k_6,l_6,p-1)},C_7^{(n(p-1))})=p\\
\chi^{3+1}&(\text{otherwise})=1
\end{align*}
It is sufficient to prove $\chi^{3+1}([g])=\text{RHS}([g])$, which follows from a direct comparison using the character tables.
\end{proof}

\subsection{DW TQFTs as $1$-extended}\label{DW:extended}

In a $1$-extended $(n+1)$-TQFT, vector spaces are assigned to 
 $n$-manifolds $Y$ when the connected components of their boundaries $\partial Y$ are labeled.  Physically, the labels are types of topological charges for some extended excitations.  When $n=2$, they are the so-called anyon types. We are interested in $n=3$, then besides pointed excitations in the three manifold $Y$, there are $1$-dimensional excitations supported around any embeded graph $\Gamma$ in $Y$.  The shape of such an extended excitation is the boundary surface $\Sigma$ of a closed neighborhood $N(\Gamma)$ of $\Gamma$.  Therefore, we need to find the label set for each of the genus=$g$ surface $\Sigma_g$, where $g=0$ corresponds to pointed excitations and $g=1$ loop excitations. 

Our description of the label set, vector spaces, and gluing formulas for  $(3+1)$-DW TQFTs is essentially a reformulation of results in \cite{morton07}. Afterwards, we describe representations of the motion groups of links in the three sphere.

\subsubsection{Label sets for DW TQFTs}

For $(2+1)$-DW theories $(Z_G,V_G)$, the shape of a pointed excitation is a circle---the boundary of the small disk neighborhood of a point.  It is known that the labels are pairs $([g],\alpha)$, where $[g]$ is a conjugacy class, and $\alpha$ an irreducible representation (irrep) of the centalizer $C_G(g)$ of $g\in G$.  This is a special case of the general description of labels for DW TQFTs: for a shape $M$ excitation of a manifold $Y$, the label set is a pair $(\rho, \alpha)$, where $\rho$ is a representation of $\pi_1(M)$ to $G$, and $\alpha$ an irrep of the centralizer of the image of $\rho(\pi_1(M))$ in $G$.  

\begin{mypro}\label{DW:labelset}
The label set for a boundary manifold $\Sigma$ consists of pairs $\{([\rho], \alpha)\}$, where $[\rho]$ is a conjugacy class of maps $\rho:\pi_1(\Sigma)\longrightarrow G$, and $\alpha$ is an irreducible representation of the centralizer $C_G(\text{Im}(\rho))$ of the image of $\rho$ in $G$ (homomorphisms {\em up to conjugation}).
\end{mypro}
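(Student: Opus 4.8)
The plan is to identify the semisimple category $\mcC(\Sigma)$ attached to the surface $\Sigma$ and then decompose it into its simple objects; the labels are by definition a complete set of simple representatives $\Pi_\mcC(\Sigma)$. Following \cite{morton07}, in the $(3+1)$-DW theory the category attached to a closed surface $\Sigma$ is the representation category of the action groupoid $\Hom(\pi_1(\Sigma),G)/\!/G$, where $G$ acts on the set of homomorphisms by pointwise conjugation. In the coloring language of Section \ref{DW:atiyah} this is transparent: a coloring of $\Sigma$ determines a homomorphism $\rho:\pi_1(\Sigma)\to G$ once a base point is chosen, a change of base point or gauge acts by an overall conjugation, and the morphisms of the cylindrical category $\mcA(\Sigma\times I)$ are exactly these gauge transformations. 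Thus $\mcC(\Sigma)\simeq\Rep\bigl(\Hom(\pi_1(\Sigma),G)/\!/G\bigr)$, and it remains to read off the simple objects of a finite groupoid representation category.

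First I would recall the standard decomposition for the representation category of an action groupoid $X/\!/G$, with $G$ finite acting on a finite set $X$. Such a category is semisimple and splits as a direct sum over the $G$-orbits, $\Rep(X/\!/G)\simeq\bigoplus_{\mathcal{O}}\Rep(\St_G(x_\mathcal{O}))$, where $x_\mathcal{O}$ is any chosen point in the orbit $\mathcal{O}$: concretely a $G$-equivariant vector bundle over $X$ is determined on each orbit by its fiber over one point together with the action of the stabilizer of that point, and a different choice of point changes the identification only up to isomorphism. The simple objects in the summand indexed by $\mathcal{O}$ are therefore the irreducible representations of $\St_G(x_\mathcal{O})$.

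Next I would apply this with $X=\Hom(\pi_1(\Sigma),G)$ under the conjugation action. The orbits are precisely the conjugacy classes $[\rho]$ of homomorphisms. For a fixed $\rho$ the stabilizer is $\{g\in G: g\rho(\gamma)g^{-1}=\rho(\gamma)\text{ for all }\gamma\in\pi_1(\Sigma)\}$, and since $g$ commutes with every generator $\rho(\gamma)$ if and only if it commutes with the subgroup they generate, this stabilizer equals the centralizer $C_G(\text{Im}(\rho))$. Combining the two previous steps, a complete set of simple objects of $\mcC(\Sigma)$ is the set of pairs $([\rho],\alpha)$ with $[\rho]$ a conjugacy class of homomorphisms $\pi_1(\Sigma)\to G$ and $\alpha$ an irreducible representation of $C_G(\text{Im}(\rho))$, which is exactly the asserted label set.

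The routine part of the argument is the orbit decomposition of groupoid representations together with the elementary identification of the stabilizer with the centralizer. The step that requires genuine care --- and where I would lean on \cite{morton07} --- is the first one: verifying within the present framework that $\mcC(\Sigma)$ really is $\Rep\bigl(\Hom(\pi_1(\Sigma),G)/\!/G\bigr)$. Concretely one must check that the DW analogue of the cylindrical category built from colorings of $\Sigma\times I$ has the expected morphisms, that its representation category is independent of the chosen base point, and that the cylinder gluing of the gluing axiom matches groupoid composition. Once this geometric input is in place, everything that follows is the purely algebraic representation theory of a finite action groupoid.
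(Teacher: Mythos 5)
Your proposal is correct and takes essentially the same route as the paper: the paper (following \cite{morton07}) realizes $\mcC(\Sigma)$ as the functor category $[[\Pi_1(\Sigma),\underline{G}],{\mcV}ec]$, where $[\Pi_1(\Sigma),\underline{G}]$ is precisely your action groupoid $\Hom(\pi_1(\Sigma),G)/\!/G$, passes to its skeleton (objects are conjugacy classes $[\rho]$, automorphism groups are $C_G(\text{Im}\,\rho)$), and reads off the simple objects as irreducible representations of those automorphism groups. Your orbit decomposition of $\Rep(X/\!/G)$ together with the stabilizer-equals-centralizer observation is the same computation, merely phrased via orbits rather than via the skeleton.
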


There is an involution $*$ on $\{([\rho],\alpha)\}$ by $*([\rho],\alpha)=([\rho],\alpha^*)$ where $\alpha^*$ is the dual representation of $\alpha$.

As an analogue to $(2+1)$-discrete gauge theory, we will refer to a label $([\rho],[\alpha])$ for a boundary in DW theories as a pure flux if the representation $\alpha$ is trivial.

As an example, for $G=S_3$, there are $8$ types of different labels for  $(2+1)$-TQFTs as shown in TABLE 3.
\begin{table}  
\begin{center}
\begin{tabular}{|l|l|l| p{5cm}|}
\hline  
$[\rho]$&$C_G(Im(\rho))$&$\alpha_{C_{G}(Im(\rho))}$
\\
\hline
$[1]$&$S_3$&$\alpha_{S_3}^1,\alpha_{S_3}^2,\alpha_{S_3}^3,$
\\
\hline  
$[(12)]$&$\mathbb{Z}_2$&$\alpha_{\mathbb{Z}_2}^1,\alpha_{\mathbb{Z}_2}^2$
\\  
\hline
$[(123)]$&$\mathbb{Z}_3$&$\alpha_{\mathbb{Z}_3}^1,\alpha_{\mathbb{Z}_3}^2,\alpha_{\mathbb{Z}_3}^3$
\\
\hline
\end{tabular} 
\caption{$((2+1),G=S_3)$}
\end{center}  
\end{table}
where $\alpha_{S_3}^i$ are all the irreducible representations of $S_3$ up to conjugation and similarly for $\alpha_{\mathbb{Z}_2}^i,\alpha_{\mathbb{Z}_3}^i$.

For $(3+1)$-TQFTs with $G=S_3$, there are $3$ types for the two sphere as shown in TABLE 4.
\begin{table}  
\begin{center}
\begin{tabular}{|l|l|l| p{5cm}|}
\hline  
$[\rho]$&$C_G(Im(\rho))$&$\alpha_{C_{G}(Im(\rho))}$
\\
\hline
$[1]$&$S_3$&$\alpha_{S_3}^1,\alpha_{S_3}^2,\alpha_{S_3}^3,$
\\
\hline
\end{tabular} 
\caption{$((3+1),\text{ two sphere },G=S_3)$}
\end{center}  
\end{table}
And there are $21$ types for the torus.

We follow the notation of \cite{morton07} in this sub-section. For any two categories $\mcC,\mcD$, $[\mcC,\mcD]$ denotes the category of functors from $\mcC$ to $\mcD$ as objects and natural transformations as morphisms. For any compact $m$-manifold $M$ $(m\geq 0)$ with base points $b_i$ on the connected components $M_i$ $(i=1,...,k)$ of $M$, let $\Pi_1(M)$ be the category with $\{b_i\}$ as the set of objects, the fundamental group $\pi_1(M_i,b_i)$ as the morphism set of $b_i$, and the empty set as morphism space between any two different base points. 

Given any finite group $G$, we have $[\Pi_1(M),\underline{G}]$, where $\underline{G}$ is the category with only one object whose morphism set is $G$. An explicit description of $[\Pi_1(M),\underline{G}]$ is as follows. Each object is a $k$-tuples $(\rho_1,...,\rho_k)$ where $\rho_i$ is a homomorphsim from $\pi_1(M_i,b_i)$ to $G$. The morphism set between $(\rho_1^1,...,\rho_k^1),(\rho^2_1,...,\rho^2_k)$ is $\prod_iG_i,$ where $G_i=\{g\in G|gh^1=h^2g,h^j\in\text{Im}\rho_i^j,j=1,2\}$. A presentation for the skeleton of $[\Pi_1(M),\underline{G}]$ follows from this description. Each object of the skeleton is a $k$-tuples $([\rho_1],...,[\rho_k]),$ where $[\rho_i]$ is conjugacy class and the morphism set is $\prod_iC_G(\text{Im}\rho_i)$. From now on, $[\Pi_1(M),\underline{G}]$ denotes the skeleton. 

The representation category of $[\Pi_1(M),\underline{G}]$ is defined as the functor category $[[\Pi_1(M),\underline{G}],{\mcV}ec],$ where ${\mcV}ec$ is the category of finite dimensional vector spaces and linear transformations. Next we describe the irreducible representations, i.e. irreducible objects of $[[\Pi_1(M),\underline{G}],{\mcV}ec]$. For any $([\rho_1],...,[\rho_k])$ and a functor object $\alpha\in \textrm{Obj}([[\Pi_1(M),\underline{G}],{\mcV}ec])$, the image $\alpha([\rho_1],...,[\rho_k])$ of $\alpha$ is a finite dimensional vector space and $\alpha(\text{Aut}(([\rho_1],...,[\rho_k])))=\alpha(\prod_i C_G(\text{Im}\rho_i))$ defines a $\prod_i C_G(\text{Im}\rho_i)$-action on the vector space $\alpha([\rho_1],...,[\rho_k])$. By the additive structure induced from ${\mcV}ec$, the irreducible representations of $[\Pi_1(M),\underline{G}]$, i.e. the irreducible objects of $[[\Pi_1(M),\underline{G}],{\mcV}ec]$, are irreducible representations $\alpha$ of the group $\prod_i C_G(\text{Im}\rho_i)$ for some $([\rho_1],...,[\rho_k])$ and representatives $\rho_i$. Let $L_M$ be the set of irreducible representations of $[\Pi_1(M),\underline{G}]$. Specializing to the boundary closed $(n-1)$-manifold $\Sigma$, we obtain the label set $L_\Sigma$ for $\Sigma$ as in Prop. \ref{DW:labelset}.

\subsubsection{Vector spaces}

Let $Y$ be a connected $n$-manifold with boundary components $(\Sigma_i,b_i), i=1,..,k$ labeled by $\{([\rho_i], \alpha_i)\}$ and with based points $b_i$. 
We also need to choose a base point $b$ of $Y$ and a fixed collection of arcs $A_i$ in $Y$ such that $A_i$ connecting $b$ to $b_i$.  Then for any $\rho:\pi_1(Y,b)\longrightarrow G$, $A_i$ induces a homomorphism $\rho|_{\Sigma_i,b_i}:\pi_1(\Sigma_i,b_i)\longrightarrow G$ by first mapping into $\pi_1(Y,b_i)$ then using $A_i$ to send the image to $\pi_1(Y,b)$, and finally composing with $\rho$. For any $\rho$ such that $[\rho|_{\Sigma_i,b_i}]=[\rho_i]$, the centralizer $C_G(\text{Im}\rho)$ is conjugacy to one subgroup of $C_G(\text{Im}\rho_i)$ by $\tau_i$. Then an representation of $C_G(\text{Im}\rho)$ can be defined as follows. Given labels $([\rho_i],\alpha_i)$, we have a representation $\otimes_i\alpha_i$ of $\prod_iC_G(\text{Im}\rho_i)$. The homomorphism $\prod_i\tau_i:C_G(\text{Im}\rho)\longrightarrow\prod_iC_G(\text{Im}\rho_i)$
 induces a representation $\otimes\alpha_i$ of $C_G(\text{Im}\rho)$.

\begin{mydef}\label{DW:vector}
Define the vector space as 
$$V(Y;([\rho_i],\alpha_i))=\bigoplus_{[\rho]}T_{\rho},
$$
where $\rho:\pi_1(Y, b)\longrightarrow G$, and the sum is over all the conjugacy classes $[\rho]$ such that $[\rho|_{\Sigma_i,b_i}]=[\rho_i]$, and $T_{\rho}$ is the trivial component of the representation $\otimes_i\alpha_i$ of $C_G(\text{Im}\rho)$ for some representative $\rho$.
\end{mydef}

Note that first the space $T_{\rho}$ depends on the choice of representatives of the conjugacy class and the paths $A_i$. Moreover, when $\alpha_i$ are all pure fluxes, the vector space is isomorphic to $\mathbb{C}\{[\rho]|[\rho|_{\Sigma_i,b_i}]=[\rho_i]\}$.

A label $([\rho],[\alpha])$ with $\alpha$ the trivial representation is  referred to as a pure flux. For pure fluxes,  $V(Y;[\rho_i],\unit)=V(Y;[\rho_i])=\mathbb{C}\{[\rho]|[\rho|_{S_i}]=[\rho_i]\}$. 
For future computation, $V(Y;[\rho_i])$ can be reformulated as follows. Given labels $[\rho_i]$, choose a representative $\rho_i$ for each conjugacy class $[\rho_i]$, then we define the vector space $V(Y;\rho_i)$ to be spanned by $\{(\rho,a_i)|\rho:\pi_1(Y,b)\longrightarrow G,a_i(\rho|_{\Sigma_i,A_i})a_i^{-1}=\rho_i\}/\sim,$ where $(\rho,a_i)\sim(\rho^{\prime},a_i^{\prime})$ iff $[\rho]=[\rho^{\prime}]$. Then $V(Y;[\rho_i])$ is isomorphic to $V(Y;\rho_i)$.

\subsubsection{Disk, annulus, and gluing axioms} 

Suppose that $Y$ is an oriented compact $n$-manifold 
with boundary $\partial{Y}=\Sigma=\sqcup_i \Sigma_i$.
The manifold $Y$ can be regarded as a bordism between $\Sigma$ and the empty manifold. By \cite{morton07}, $Y$ defines a functor $[[\Pi_1(Y),\underline{G}],{\mcV}ec]$ from $[[\Pi_1(\Sigma),\underline{G}],{\mcV}ec]$ to $[[\Pi_1(\emptyset),\underline{G}],{\mcV}ec]={\mcV}ec$.  
The functor  $[[\Pi_1(Y),\underline{G}],{\mcV}ec]$ corresponds to a $(1\times l)$-matrix with vector spaces as entries and the columns indexed by $L_\Sigma$, where $l=|L_\Sigma|=|\prod_i L_{\Sigma_i}|$. Then we assign to any oriented compact $n$-manifold $Y$ with boundary components $\Sigma_i$ labelled by irreducible representations in $L_{\Sigma_i}$ the vector space in the matrix $[[\Pi_1(Y),\underline{G}],{\mcV}ec]$ indexed by the same irreducible representation in $L_\Sigma$. A direct computation proves that the vector space is the same as the $V(Y;([\rho_i],\alpha_i))$ in Def. \ref{DW:vector}.

Let $D$ be an $n$-ball with boundary labelled by $([\rho],\alpha)$ where $\rho:\pi_1(\partial D)\longrightarrow G$. Since $\pi_1(D)$ is trivial, to make $V(D;[\rho],\alpha)$ nonzero, $[\rho]$ has to be the trivial map. Thus $T_{\rho}$ is the irreducible representation of $G$. Since $T_{\rho}$ is the trivial component, $V(D;[\rho],\alpha)=\mathbb{C}$ when $([\rho],\alpha)=(\unit,\unit)$ and $0$ otherwise.

Let $\Sigma$ be any closed $n-1$-manifold. Label two boundaries of $\Sigma\times I$ by $([\rho_i],\alpha_i)(i=1,2)$. Since $\pi_1(\Sigma\times I)=\pi_1(\Sigma)$, to make $V(\Sigma\times I;([\rho_i]),\alpha_i)$ nonzero, $[\rho_1]=[\rho_2]$ and thus we suppose that $\rho_1=\rho_2$ and $\alpha_i$ are the irreducible representations of $C_G(Im(\rho_1))$. Then
$$T_{\rho_1}=\text{Hom}_G(1,\alpha_1\otimes\alpha_2)=\text{Hom}_G(\alpha_1^*,\alpha_2)
$$
Since $\alpha_i$ are irreducible, $T_{\rho_1}=\mathbb{C}$ when $\alpha_1^*=\alpha_2$ and $0$ otherwise.

To verify the gluing axiom, let $Y_{gl}$ be the $n$-manifold obtained from gluing two boundary components $\{\Sigma,-\Sigma\}$ of an $n$-manifold $Y$. Suppose the other boundary components $\Sigma_i$ of $Y$ are labelled by $([\rho_i],\alpha_i)$. Let $Y$ be the bordism from $\sqcup_i \Sigma_i$ to $\Sigma \sqcup- \Sigma$, $\Sigma \times I$ be the bordism from $\Sigma \sqcup -\Sigma$ to the empty manifold. Then $Y_{gl}$ is the composition of the two bordisms $Y,\Sigma \times I$, which is from $\sqcup_i \Sigma_i$ to the empty manifold, and Thm. 10 in \cite{morton07} implies the gluing axiom.

\subsubsection{Linear isomorphisms for extended bordisms}\label{DW:corner} 
Extended bordisms are bordisms between bordisms with corners.
Suppose the following diagram commutes as in \cite{morton07}:
$$\xymatrix{
\Sigma \ar[d]^{\tau_2} \ar[r]^{\tau_1} &Y_1\ar[d]^{\theta_1}\\
Y_2 \ar[r]^{\theta_2}          &X}
$$
such that $\tau_i:\Sigma \longrightarrow\partial Y_i$, $\theta_1\sqcup\theta_2:Y_1\sqcup_{\tau_1(\Sigma)=\tau_2(\Sigma)} Y_2\longrightarrow\partial X$. Then $[[X,\underline{G}],{\mcV}ec]$ defines a linear isomorphism between the vector spaces in the matrices $[[Y_i,\underline{G}],{\mcV}ec]$ with the corresponding labels.

\subsection{Pure flux representations of the motion groups of links from DW TQFTs}
From now on, we just consider pure flux labels and labels can be regarded as conjugacy classes of homomorphisms. Suppose that $L=\sqcup_iL_i$ is an oriented link in $\mathbb{S}^3,$ where $L_i$ the connected components and $N(L_i)$ a tubular neighbourhood. Any $f\in\mathcal{H}^+(\mathbb{S}^3;L)$ induces an isotopy class in $\mathcal{H}^+(\mathbb{S}^3\backslash \cup_iN(L_i),\cup_i\partial N(L_i))$. Then the following diagram commutes as in Sec. \ref{DW:corner}:
$$\xymatrix{
\partial N(L)\times\{0,1\}\ar[rr]^{id\sqcup id}\ar[d]^{id\sqcup f|_{\partial N(L)}}
& 
&(\mathbb{S}^3 \backslash N(L))\times\{0,1\}\ar[d]^{id\sqcup f}\\
\partial N(L)\times I\ar[rr]^{id}
& 
&(\mathbb{S}^3 \backslash N(L))\times I
}.
$$

Choose base points $b,b_i$ for $\mathbb{S}^3 \backslash N(L),\partial N(L_i)$, respectively. Each $L_i$ is labeled by a representation $\rho_i$, where $\rho_i:\pi_1(\partial N(L_i),b_i)\longrightarrow G$. More concretely, the representation $\rho_i$ can be parameterized by a pair of commuting group elements, so each $L_i$ is labeled by $(g_i,h_i)\in G^2$, where $g_ih_i=h_ig_i$ and $g_i,h_i$ correspond to the meridian and longitude of $\partial N(L_i)$ at $b_i$, respectively. Any motion $h_t$ in $\mcM(L\subset \mbbS^3)$ defines a linear isomorphism between $V_G(\mathbb{S}^3 \backslash N(L);\rho_i)$ and $V_G(\mathbb{S}^3 \backslash N(L);f|_{\partial N(L)}(\rho_i))$, where $f(\rho_i)$ comes from pre-composition and $f=h_1$. Then the action of $h_t\in\mathcal{M}(L\subset\mathbb{S}^3)$ is defined by $\partial (h_t)=h_1\in\mathcal{H}^+(\mathbb{S}^3;L)$ as above. When $[\rho_i]=[h_1(\rho_i)]$ for any $h_t$, we obtain a representation of $\mathcal{M}(L\subset \mathbb{S}^3)$ on the vector space $V_G(\mathbb{S}^3 \backslash N(L);\rho_i)$. 

\section{Dimension reduction conjecture for representations of motion groups of links with generalized axes}\label{motiontorus}

In this section, we focus on representations of the motion groups of links with generalized axes from $(3+1)$-DW TQFTs labeled by pure fluxes.  The main technical theorems are Goldsmith's presentation of such motion groups and dimension reduction.  We conjecture that the representations of such motion groups in general can be reduced to representations of certain surface braid groups.

\subsection{Main conjecture for representations of motion groups of links with generalized axes}
Given a link $L$ with a generalized axis $\gamma$ and fiber $F$.  
Since $F\longrightarrow \mbbS^3\backslash \gamma\longrightarrow \mbbS^1$ is the fibration and $L$ intersects with each fiber transversely in $P$, we have an induced fibration $F\backslash P\longrightarrow \mbbS^3\backslash (\gamma\cup L)\longrightarrow \mbbS^1$, and the following exact sequence
$$1\longrightarrow\pi_1(F\backslash P)\stackrel{i}{\longrightarrow}\pi_1(\mbbS^3 \backslash (\gamma\cup L))\stackrel{\pi}{\longrightarrow}\pi_1(\mbbS^1)\longrightarrow1.
$$

Choose base points $p,p_i,p_{\gamma}$ on $F\backslash P\subset\mathbb{S}^3\backslash(\gamma\cup L)$ for $\mathbb{S}^3\backslash{\gamma\cup L},L_i,\gamma$, and arcs $A_i,A_{\gamma}$ on $F\backslash P\subset\mathbb{S}^3\backslash(\gamma\cup L)$ connecting $p$ to $p_i,p_{\gamma}$. Label $L_i,\gamma$ by $[g_i,h_i],[g,h]$ such that $[g_i,h_i]$ are consistent with the action of $\mathcal{H}^+(\mathbb{S}^3,\gamma\cup L)$ on $L_i$. Next the corresponding labels for $F\backslash P$ are given as follows. The intersection points between $L_i$ and $F$ are labelled by $[g_i]$ and $\partial F$ by $[h]$. Then for any $[\rho]\in V_G(\mathbb{S}^3 \backslash (\gamma\cup L);[g,h],[g_i,h_i])$, $i^*([\rho])\in V_G(F\backslash P;[g_i],[h])$, where $i^*$ induced by $i_*:\pi_1(F\backslash P,p)\longrightarrow\pi_1(\mathbb{S}^3\backslash(\gamma\cup L),p)$. And for any $h_1\in\mathcal{H}_{\phi}(F,P)$, we have $i^*eJ(h_1)([\rho])=h_1(i^*([\rho]))$.

Our main conjecture is as follows.

\begin{conj}\label{mainconjecture}

There is a map from labels of a $1$-extended $(3+1)$-TQFTs to those of $1$-extended $(2+1)$-TQFTs so that the representations of the motion groups of a link $L$ from the $(3+1)$-TQFT decompose as direct sums of the representations of some surface braid groups of the fiber surface $F$ if the link $L$ has a generalized axis $\gamma$ with fiber $F$.

\end{conj}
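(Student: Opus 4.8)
The plan is to realize the conjectured decomposition as a \emph{twisted} version of the dimension reduction of Theorem \ref{DW:ThmDRclosed}, with the fibration $F\setminus P\longrightarrow \mathbb{S}^3\setminus(\gamma\cup L)\longrightarrow \mathbb{S}^1$ playing the role that the product $Y\times\mathbb{S}^1$ plays there. First I would make the label map from $(3{+}1)$-labels to $(2{+}1)$-labels precise at the level of homomorphisms. The short exact sequence $1\to\pi_1(F\setminus P)\to\pi_1(\mathbb{S}^3\setminus(\gamma\cup L))\to\pi_1(\mathbb{S}^1)\to 1$ exhibits $\pi_1(\mathbb{S}^3\setminus(\gamma\cup L))$ as an extension in which a chosen lift $t$ of the generator of $\pi_1(\mathbb{S}^1)$ acts on $\pi_1(F\setminus P)$ by the monodromy $\phi_*$. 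Hence a pure-flux label $\rho$ is determined by a pair $(g,\beta)$, where $g=\rho(t)$ and $\beta=\rho|_{\pi_1(F\setminus P)}$, subject to the twisted-equivariance relation $\beta\circ\phi_* = g\,\beta\,g^{-1}$. Passing to conjugacy classes, $g$ contributes a class $[g]\in[G]$; after conjugating to a fixed representative $g$, the map $\beta$ lands in the centralizer $C_G(g)$ and is $\phi_*$-equivariant there. This is exactly the twisted analogue of the identity $\Hom(\pi_1(Y)\times\mathbb{Z},G)/\sim=\bigcup_{[g]}\Hom(\pi_1(Y),C_G(g))/\sim$ used in Theorem \ref{DW:ThmDRclosed}, with the product replaced by the mapping torus of $\phi$, and the label on the axis $\gamma$ records precisely $[g]$. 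This is how I would \emph{define} the required label correspondence.

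Next I would verify compatibility with the motion-group action using Goldsmith's sequence \eqref{equ:fiber}. Since $L$ is in braid position with respect to the fibration, every element of $\mathcal{H}^+(\mathbb{S}^3;L,\gamma)$ lifts to a fiber-preserving diffeomorphism, and \eqref{equ:fiber} identifies the relevant motion group, up to the central data $\langle[\phi],[\tau]\rangle$, with the centralizer $\mathcal{H}_{\phi}(F;P)$ of $[\phi]$ inside the surface braid group $\mathcal{B}(F;P)=\mathcal{H}^+(F;P)$. A fiber-preserving diffeomorphism acts on the fiber datum $\beta$ by pre-composition with the induced surface-braid automorphism of $\pi_1(F\setminus P)$; because it lies in $\mathcal{H}_{\phi}(F;P)$ it commutes with $\phi_*$, so it preserves the twisted-equivariance relation and fixes $g$, whence it preserves each summand indexed by $[g]$. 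This is exactly the local identity $i^*\,eJ(h_1)([\rho])=h_1(i^*([\rho]))$ already recorded in the excerpt. Thus the $(3{+}1)$ pure-flux representation of the motion group on $V_G(\mathbb{S}^3\setminus N(L);\rho_i)$ splits as $\bigoplus_{[g]}W_{[g]}$, where $W_{[g]}$ is built from $\phi_*$-equivariant homomorphisms into $C_G(g)$ and carries an action of $\mathcal{H}_{\phi}(F;P)$, i.e.\ a $(2{+}1)$-$\textrm{DW}_{C_G(g)}$ representation of a surface braid group of $F$.

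To obtain an unconditional result I would specialize to the torus links $TL_{(p,q)^n}$ labeled by pure fluxes, where everything is explicit: the fiber $F$ is the genus-$\tfrac{(p-1)(q-1)}{2}$ Seifert surface of the $(p,q)$-torus knot, $P=F\cap L$ consists of the $n$ intersection points, and the monodromy $\phi$ is periodic with computable centralizer. Here the motion group has the presentation of Proposition \ref{presentation}, and I would match generators directly: the braid generators $\sigma_1,\dots,\sigma_{n-1}$ act as the surface-braid generators permuting $P$, while the flux generators $r_1,\dots,r_n$ (together with $r_{2\pi}$ in the even case, or with $r_1=r_n=1$ in the Hopf case) act as the loop rotations implementing the monodromy and Dehn-twist data $\langle[\phi],[\tau]\rangle$ of \eqref{equ:fiber}. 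Checking that these operators satisfy the relations of Proposition \ref{presentation} on each $W_{[g]}$, and that $W_{[g]}$ is the $(2{+}1)$-$\textrm{DW}_{C_G(g)}$ space of $(F,P)$, then yields the claimed direct-sum decomposition and proves the torus-link case of Conjecture \ref{mainconjecture}.

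The hard part, and the reason the general statement remains conjectural, is pinning down the correct label correspondence in the presence of \emph{nontrivial} monodromy. For a genuine product the centralizer decomposition is automatic and the full surface braid group $\mathcal{B}(F;P)$ acts; for the mapping torus one must carry the $\phi_*$-twisting throughout, and it is precisely the twisting that forces only the centralizer $\mathcal{H}_{\phi}(F;P)$ (rather than all of $\mathcal{B}(F;P)$) to preserve the equivariance relation. The delicate point is to show that each summand $W_{[g]}$ is genuinely the $(2{+}1)$-$\textrm{DW}_{C_G(g)}$ representation space of $(F,P)$ restricted to $\mathcal{H}_{\phi}(F;P)$, rather than some $\phi$-twisted variant of it, and to exhibit this identification \emph{uniformly} over all links with generalized axes. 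In the torus-link case periodicity of $\phi$ and Goldsmith's presentation make the centralizer and its action fully computable, which is what lets the special case go through; controlling the twisting for a general fibered axis is the principal obstacle.
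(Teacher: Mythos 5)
The statement you are attempting is a \emph{conjecture} in the paper: the authors prove only two special cases, the product/mapping-class-group case (Theorem \ref{DW:ThmDRclosed}) and the torus-link case (Theorem \ref{DW:torus}), and your proposal likewise only sketches a general strategy plus the torus-link case, so it must be judged on that case. There it contains a genuine error at the key step, the label correspondence. From the twisted relation $\beta\circ\phi_*=g\,\beta\,g^{-1}$ it does \emph{not} follow that $\beta$ lands in $C_G(g)$: if $\mathrm{Im}\,\beta\subseteq C_G(g)$ then the relation collapses to $\beta\circ\phi_*=\beta$, a much stronger and generally false constraint. That inference is valid only when $\phi_*=\mathrm{id}$, i.e.\ exactly in the untwisted product situation of Theorem \ref{DW:ThmDRclosed}, which is the situation the conjecture is trying to go beyond. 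What the twisting actually produces --- and what the paper's Theorem \ref{DW:torus} exhibits --- is the centralizer $C_G(y_0^q)$ of the image of the \emph{central} element $x^p=y^q$ (morally $C_G(g^N)$ for monodromy of period $N$), with the decomposition indexed by \emph{pairs} of conjugacy classes $([x],[y])$, not by a single class $[g]$ with gauge group $C_G(g)$. So the summands $W_{[g]}$ you construct are not the correct spaces.

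The topological setup of your torus-link specialization is also off, and it hides the step that is actually hard. For $TL_{(p,q)^n}$ the fiber of the generalized axis is not the Seifert surface of the $(p,q)$-torus knot, and $P=F\cap L$ is not $n$ points: when $(p,q)\neq(1,1)$ each braided component crosses each fiber of the axis fibration more than once. The paper's proof deliberately sidesteps this by trading the fiber surface for the $2$-complex $K_{u,v}$ (the annulus of $(u,v)$-curves, $pv-qu=1$, attached to the two core circles), which each component meets exactly once, and then carries out everything by explicit fundamental-group computations: Lemma \ref{DW:pione}, Proposition \ref{propaction} (via Goldsmith's generators), and the bijection $\Psi_{[x],[y]}$. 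The authors state explicitly that reformulating Theorem \ref{DW:torus} in terms of a braid group of an honest fiber surface is \emph{not} established. Your third paragraph simply asserts that identification (``checking that these operators satisfy the relations \ldots yields the claimed decomposition''), and your fourth paragraph then concedes that this identification --- whether $W_{[g]}$ is a genuine $(2{+}1)$-$\mathrm{DW}$ space of $(F,P)$ rather than a $\phi$-twisted variant --- is unresolved. So beyond the centralizer error, the central step of your argument is missing, and it is precisely the step the paper itself leaves open; the proposal therefore does not prove the torus-link case, let alone Conjecture \ref{mainconjecture}.
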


Equations (\ref{equ: Dahm}) (\ref{equ:fiber}) suggest a form of the decomposition of the motion group representations or rather an organization of some surface braid group representations.  The non-trivial monodromy map of the generalized axis knot complement fibration induces a map of surface braid group representations.  The non-trivial fibfation should manifest itself in that the motion group representation should be essentially the sub-representation of a direct sum of surface braid group representations consisting of commutants of this monodromy map.

Thm. \ref{DW:ThmDRclosed} is the analogous statement of this conjecture for the mapping class groups.  In the following, we will provide further evidence by studying the motion groups of the necklace links and the torus links.  Our main Thm. \ref{DW:torus} is formulated using the fundamental group of a $2$-complex, which is not a fiber surface. But our theorem should be equivalent to one using a braid group of some fiber surface.

\subsection{Representations of the motion groups of the necklace links, and cablings of the Hopf link}

In this subsection, we obtain results for the representations of the motion groups of several families of links from DW TQFTs labeled by pure fluxes: 
the motion groups of the necklace links, which are used to study statistics of loops in physics \cite{levin14}, and the motion groups of the $n$-Hopf links.

\subsubsection{The necklace links} Let $L$ be necklace links as in Fig.\ref{fig:my_labelnecklace} where $L_i(i=1,...,n)$ are the components of the unlink and $L_c$ is the track circle that links them. Its fundamental group is 
$$\pi_1(\mbbS^3\backslash L,P)=F(x)\times F(x_1,...,x_n),
$$
where $F(x)$ is the infinite cyclic group generated by the loop for the meridian of $L_c$ and $F(x_1,...,x_n)$ is the free group generated by the loops for the meridians of $L_i$. The motion groups $\mathcal{M}(L\subset \mbbS^3)$ are generated by $\sigma_i(i=1,...,n-1)$ and $p$, where $\sigma_i$ interchanges $L_i$ and $L_{i+1}$, and $p$ permutes $L_i$ along the counterclockwise. Since the motion group permutes all $L_i$, we require the labels on $L_i$ to be the same. Thus $(g,h),(g_c,h_c)$ are used to label $L_i,L_c$ such that $gh=hg$, $g_ch_c=h_cg_c$. The images of $x, x_i$ in $G$ are also denoted by $x,x_i$. Then
$$V_G(\mbbS^3\backslash L;(g,h),(g_c,h_c))=\mathbb{C}\{[(x,x_i,a_i,a_c)]\}
$$
satisfying compatible conditions
$$x=a_cg_ca_c^{-1},x_i=a_iga_i^{-1}
,
x=a_iha_i^{-1},x_1\cdots x_n=a_ch_ca_c^{-1}
,
xx_i=x_ix.
$$

To have the above vector space nonzero, $h$ has to be a conjugacy of $g_c$. Then choosing the labels $(g_c,h_c),(g,g_c)$, we obtain the action of the motion group as follows.
$$\sigma_i(x)=x
$$
$$\sigma_i(x_i)=x_ix_{i+1}x_i^{-1},\sigma_i(x_{i+1})=x_i
$$
$$\sigma_i(x_j)=x_j,j\neq i,i+1
$$
$$p(x)=x
$$
$$p(x_i)=x_{i-1\text{ mod } n}
$$

Let $D$ be the disk bounded by $L_c$ in $\mbbS^3$. Then $L_i$ intersects with $D$ in $P_i$ transversely. We define an bijection $T$ from
\\
$V_{C_G(g_c)}(D\backslash\{P_i\};a_{c,0}^{-1}a_{i,0}ga_{i,0}^{-1}a_{c,0},h_c)$ to $V_{G}(\mathbb{S}^3\backslash L;(g,g_c),(g_c,h_c))$ by
$$T([x_i,a_i,a_c])=[g_c,x_i,a_i,a_c],
$$
where $[x_0,x_{i,0},a_{i,0},a_{c,0}]\in\{[x,x_i,a_i,a_c]\}_{(g,g_c),(g_c,h_c)}$. It induces a bijection $\tilde{T}$ on their automorphism spaces.
Let $i$ be the inclusion of $\mathcal{B}(D,n\;\text{pts})$ into $\mathcal{M}(L\subset \mathbb{S}^3)$. The above computation results in 
\begin{mypro}
The following diagram commutes:
$$\xymatrix{
    \mathcal{M}(L\subset\mathbb{S}^3)\ar[d]^{\rho^{(3+1)-DW}}& \mathcal{B}(D,n\; \textrm{pts})\ar[l]^{i} \ar[d]^{\rho^{(2+1)-DW}}\\
    Aut(V_G(\mbbS^3\backslash L;(g,g_c),(g_c,h_c)))&
    Aut(V_{C_{G}(g_c)}(D\backslash \{P_i\};a_{c,0}^{-1}a_{i,0}ga_{i,0}^{-1}a_{c,0},h_c))\ar[l]_{\tilde{T}\ \ \ \ }
   }
$$
Moreover $Im(\rho_G^{(3+1)-DW})={\tilde{T}}(Im(\rho_{C_G(g)}^{(2+1)-DW}))$.
\end{mypro}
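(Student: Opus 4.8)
The plan is to check the square on a generating set and then compare the two subgroups of automorphisms that the horizontal and vertical maps generate. First I would record that both representations act by pre-composition on conjugacy classes of homomorphisms, so on the bases fixed above each braid generator acts by an Artin--Hurwitz move. Concretely, on $V_G(\mbbS^3\backslash L;(g,g_c),(g_c,h_c))$ the generator $\sigma_i$ of $\mathcal{M}(L\subset\mbbS^3)$ sends a class $[(x;x_1,\dots,x_n)]$ to $[(x;\dots,x_ix_{i+1}x_i^{-1},x_i,\dots)]$ and fixes $x$, exactly the formulas displayed for the necklace; on $V_{C_G(g_c)}(D\backslash\{P_i\};\dots)$ the generator $\sigma_i$ of $\mathcal{B}(D,n\text{ pts})$ performs the identical substitution on the tuple $(x_1,\dots,x_n)$ of puncture meridians, now valued in $C_G(g_c)$, this being the pure-flux motion-group action set up at the end of Section \ref{sec: DW}. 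The bijection $T$ only inserts the fixed coordinate $x=g_c$ and leaves $(x_i,a_i,a_c)$ untouched, and the compatibility $xx_i=x_ix$ with $x=g_c$ is precisely the requirement that each $x_i$ lie in $C_G(g_c)$; hence $T$ identifies the two bases. Since the Hurwitz substitution is the same word in the $x_i$ on both sides and never involves the inserted coordinate, $T$ intertwines $\rho^{(2+1)-DW}(\sigma_i)$ with $\rho^{(3+1)-DW}(i(\sigma_i))$ on basis vectors, hence everywhere by linearity. This is just the specialization, to the unknotted axis $\gamma=L_c$ with fiber disk $D$ and trivial monodromy, of the intertwining relation $i^{*}eJ(h_1)([\rho])=h_1(i^{*}([\rho]))$ recorded above, with $T$ the inverse of the restriction $i^{*}$ under the dimension-reduction identification of $G$-colorings with $C_G(g_c)$-colorings. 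Commutativity of the diagram follows.

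For the image equality I would argue as follows. Commutativity gives $\tilde{T}(\operatorname{Im}\rho^{(2+1)-DW})=\langle\,\rho^{(3+1)-DW}(i(\sigma_i))\,\rangle$, and since $\tilde{T}$ is conjugation by the bijection $T$ it is an isomorphism onto its image, so this subgroup is contained in $\operatorname{Im}\rho^{(3+1)-DW}$. The reverse inclusion amounts to showing that the one extra generator $p$ of $\mathcal{M}(L\subset\mbbS^3)$ already acts as a product of the $\sigma_i$. Although $p$ is the global rotation and is not in the image of $i$ as a motion, its effect on the basis is only the cyclic shift $[(x;x_1,\dots,x_n)]\mapsto[(x;x_n,x_1,\dots,x_{n-1})]$ of conjugacy classes, and I would exhibit a braid word $w=(\sigma_{n-1}\cdots\sigma_1)^{\pm1}$ whose Hurwitz action realizes this shift modulo simultaneous conjugation. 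The case $n=2$ is already indicative: $\sigma_1[(x;x_1,x_2)]=[(x;x_1x_2x_1^{-1},x_1)]$, and simultaneous conjugation by $x_1^{-1}$ returns $[(x;x_2,x_1)]$, the conjugation fixing $x$ precisely because $x$ commutes with $x_1$. The general case follows by the analogous computation, giving $\rho^{(3+1)-DW}(p)=\rho^{(3+1)-DW}(w)\in\langle\rho^{(3+1)-DW}(\sigma_i)\rangle$, whence $\operatorname{Im}\rho^{(3+1)-DW}=\langle\rho^{(3+1)-DW}(\sigma_i)\rangle=\tilde{T}(\operatorname{Im}\rho^{(2+1)-DW})$.

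The main obstacle is exactly this reverse inclusion: one must verify that the global rotation $p$, which genuinely lies outside $i(\mathcal{B}(D,n\text{ pts}))$ at the level of motions, nonetheless induces the same permutation of the basis as a braid word. What makes this possible is that the representation only sees the Hurwitz action on conjugacy classes, where the conjugating factors produced by the $\sigma_i$ can be absorbed by a global simultaneous conjugation, and the flux label $x=g_c$ stays fixed throughout because it commutes with every $x_i$. Carrying out this cancellation cleanly for arbitrary $n$ is the one place that needs genuine bookkeeping; everything else is an unwinding of Definition \ref{DW:vector} together with the general intertwining relation.
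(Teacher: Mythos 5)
Your proposal is correct and takes essentially the same route as the paper: compute the action of the motion-group generators on the basis of conjugacy classes of tuples $[(x,x_i,a_i,a_c)]$, note that $T$ merely inserts the fixed coordinate $x=g_c$ (the relations $xx_i=x_ix$ forcing $x_i\in C_G(g_c)$), and conclude that $T$ intertwines the two Hurwitz actions. Your explicit handling of the extra generator $p$ --- showing its cyclic-shift action on conjugacy classes coincides with that of the braid word $(\sigma_{n-1}\cdots\sigma_1)^{\pm 1}$ because the leftover simultaneous conjugation is invisible on classes and fixes $x$ --- is exactly the step the paper leaves implicit when it asserts $Im(\rho_G^{(3+1)-DW})=\tilde{T}(Im(\rho^{(2+1)-DW}))$, so it strengthens rather than departs from the paper's argument.
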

\begin{figure}
    \centering
\begin{tikzpicture}[scale=0.7]
\draw (0,0) arc (-170:170:1 and 2);
\draw (3,0) arc (-170:170:1 and 2);
\draw (6,0) arc (-170:170:1 and 2);
\draw (-1,0.5)--(1.5,0.5);
\draw (2.5,0.5)--(4.5,0.5);
\draw (5.5,0.5)--(7.5,0.5);
\draw (8.5,0.5)--(9.5,0.5);
\draw (-1,0.5)--(-1,-3);
\draw (-1,-3)--(9.5,-3);
\draw (9.5,-3)--(9.5,0.5);
\draw (1,-1) node{$L_1$};
\draw (4,-1) node{$L_2$};
\draw (7,-1) node{$L_n$};
\filldraw (1,-1.6) circle(1pt) node[below]{$P_1$};
\filldraw (4,-1.6) circle(1pt) node[below]{$P_2$};
\filldraw (7,-1.6) circle(1pt) node[below]{$P_n$};
\draw (4,-4) node{$L_c$};
\draw (5.5,1) node{$\cdots$};
\filldraw (4,-2.5) circle(1pt) node[below]{$P$};
\end{tikzpicture}
    \caption{The necklace link}
    \label{fig:my_labelnecklace}
\end{figure}
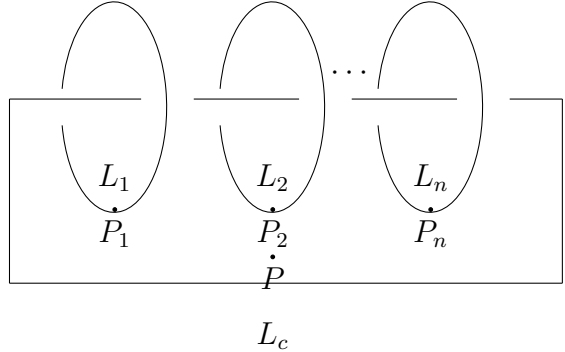

\subsubsection{The $n$-Hopf links}
Let $L$ be the link with $n$ components consisting of $n$ fibers of the Hopf fibration $\pi: \mbbS^3 \rightarrow \mbbS^2$. It is also the torus link $H_n=TL_{(1,1)^n}$. The north and south hemispheres of the base $\mbbS^2$ are the two meridian disks of the two solid tori of $\mbbS^3$. Taking one of the core circle of one solid torus as an axis, the median disk of the other solid torus is the fiber surface $F$.  

The fundamental group of the $n$-Holf link similar to Lemma 
\ref{DW:pione} is: 
$$\pi_1(\mbbS^3\backslash L)=F(y)\times F(x_1,...,x_n)
$$
and by choosing $K_\delta$ to be the disk formed by the meridian disk and $O$ as in Fig. \ref{fig:my_label3a}, labeling the components of $H_n$ by $(g,h)$ satisfying $gh=hg$, we have the following commuting diagram:
\begin{mypro}
$$\xymatrix{
    \mathcal{M}(H_n\subset \mbbS^3)\ar[d]^{\rho^{DW}}&\mathcal{B}(\text{disk},n\;
    \textrm{pts})\ar[l]^i \ar[d]^{\rho^{DW}}\\
    Aut(V_G(\mbbS^3\backslash H_n;(g,h)))&
    Aut(V_{C_{G}(h)}(K_\delta\backslash n\;\textrm{pts};\text{labeling as above}))\ar[l]^{{T_{[y]=[h]}}^{-1}\ \ \ \ \ \ \ \ \ \ \ }
   }
$$
\end{mypro}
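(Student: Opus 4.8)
The plan is to follow, almost verbatim, the template just carried out for the necklace links, with the track circle $L_c$ replaced by the generalized axis $\gamma$ (the core of one solid torus of $\mbbS^3$) and the spanning disk $D$ replaced by the meridian disk $K_\delta$ of the complementary solid torus. The two ingredients that drive the argument are already available: first, the product decomposition $\pi_1(\mbbS^3\backslash H_n)=F(y)\times F(x_1,\dots,x_n)$, which exhibits restriction to the fiber as a \emph{dimension reduction} along the conjugacy class $[y]=[h]$ exactly in the spirit of Thm.~\ref{DW:ThmDRclosed}; and second, the intertwining identity $i^{*}eJ(h_1)([\rho])=h_1(i^{*}([\rho]))$ established in the setup preceding Conjecture~\ref{mainconjecture}, which is the precise compatibility between the motion-group action on the complement and the surface-braid action on the fiber.

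First I would describe the $(3+1)$ space. Since $\mcM(H_n\subset\mbbS^3)$ permutes the $L_i$, all components carry the common commuting label $(g,h)$, and $V_G(\mbbS^3\backslash H_n;(g,h))$ is spanned by conjugacy classes $[\rho]$ of homomorphisms $\rho\colon\pi_1(\mbbS^3\backslash H_n)\to G$ whose restriction to each boundary torus $\partial N(L_i)$ is conjugate to $(g,h)$, subject to the relations $gh=hg$ and to the product structure forcing $\rho(y)$ to commute with every $\rho(x_i)$. Exactly as in the necklace computation, nonvanishing forces $\rho(y)$ to be conjugate to $h$; normalizing a representative so that $\rho(y)=h$ then confines $\rho(x_1),\dots,\rho(x_n)$ to $C_G(h)$.

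Next I would define $T_{[y]=[h]}$ as the restriction $i^{*}$ of homomorphisms along $i_{*}\colon\pi_1(K_\delta\backslash n\ \textrm{pts})=F(x_1,\dots,x_n)\hookrightarrow\pi_1(\mbbS^3\backslash H_n)$, landing in the centralizer space $V_{C_G(h)}(K_\delta\backslash n\ \textrm{pts})$. The product structure makes $T_{[y]=[h]}$ a bijection: a fiber homomorphism into $C_G(h)$ extends, uniquely up to conjugacy once $\rho(y)=h$ is fixed, to $\pi_1(\mbbS^3\backslash H_n)$, the only remaining datum being the central value $\rho(y)=h$, which commutes with the image by construction. This is the same centralizer-reduction bijection underlying Thm.~\ref{DW:ThmDRclosed}, now applied fiberwise. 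Commutativity then follows on generators: the inclusion $i\colon\mathcal{B}(\textrm{disk},n\ \textrm{pts})\to\mcM(H_n\subset\mbbS^3)$ is Goldsmith's map $eJ$ from the fibration sequence (\ref{equ:fiber}) identifying $\mathcal{B}(F;P)=\mathcal{H}^{+}(F;P)$ with braids on the punctured disk (for $H_n$ the pertinent relations are those of Prop.~\ref{presentation}(3), in particular $r_1=r_n=1$), so for any braid $h_1$ and class $[\rho]$ the identity gives $\rho^{DW}(h_1)\bigl(T_{[y]=[h]}[\rho]\bigr)=h_1(i^{*}[\rho])=i^{*}\bigl(eJ(h_1)[\rho]\bigr)=T_{[y]=[h]}\bigl(\rho^{DW}(i(h_1))[\rho]\bigr)$, i.e.\ $\rho^{DW}(i(h_1))=T_{[y]=[h]}^{-1}\circ\rho^{DW}(h_1)\circ T_{[y]=[h]}$, which is the asserted commutativity.

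The hard part will be the bookkeeping that makes $T_{[y]=[h]}$ genuinely well defined and bijective: one must verify independence of the choices of base points $p,p_i,p_\gamma$, the connecting arcs, and the conjugacy representative used to normalize $\rho(y)=h$, and then reconcile the fact that the $(3+1)$ space is taken over $\mbbS^3\backslash H_n$ (with $\gamma$ \emph{not} removed) while the fibration $F\to\mbbS^3\backslash\gamma\to\mbbS^1$ that supplies the intertwining identity lives in the complement of $\gamma\cup H_n$. Relating $\pi_1(\mbbS^3\backslash H_n)$ to the fibered complement of $\gamma\cup H_n$, and checking that filling $\gamma$ back in disturbs neither the label $[y]=[h]$ on $\partial K_\delta$ nor the braid action, is where the real content sits; the remaining verifications reduce to the same routine generator-by-generator checks already performed for the necklace links.
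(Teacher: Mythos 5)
Your skeleton---force $[\rho(y)]=[h]$ from the longitude label, normalize $\rho(y)=h$ so that the meridian images land in $C_G(h)$, let $T_{[y]=[h]}$ be restriction to the punctured-disk subgroup, and check commutativity on generators---is the same construction the paper intends: the paper prints no separate proof for $H_n$, its argument being the necklace computation immediately before the statement together with the torus-link computation immediately after (of which $H_n$ is the degenerate $(1,1)$ case). The genuine gap is the step you flag yourself and then defer. You derive the commutativity of the diagram entirely from the identity $i^{*}eJ(h_1)([\rho])=h_1(i^{*}([\rho]))$, but that identity is formulated in Sec.~\ref{motiontorus} for homomorphisms of $\pi_1(\mbbS^3\setminus(\gamma\cup L))$, with the axis removed and labeled, whereas the representation space in the proposition is built from $\pi_1(\mbbS^3\setminus H_n)$, with the axis filled in. Since you leave exactly that reconciliation undone, the proof is incomplete at its only nontrivial point. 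It is also an unnecessary detour: the paper's template never leaves $\mbbS^3\setminus H_n$. The punctured disk $K_\delta\setminus\{P_i\}$ already sits inside $\mbbS^3\setminus H_n$ (its boundary lies on the axis and its center at $O$, both disjoint from $H_n$); the map $i$ is defined directly by extending a braid of the punctured disk to an ambient motion of $H_n$, exactly as for the necklace link; and the intertwining is then the tautological compatibility of pre-composition with restriction of homomorphisms, once one checks that each generating motion restricts, up to isotopy, to the corresponding braid of $K_\delta$---this is precisely what Fig.~\ref{fig:my_label6} and Prop.~\ref{propaction} establish in the torus-link case. Replacing your appeal to $eJ$ and (\ref{equ:fiber}) by this direct restriction computation closes the gap and makes your argument coincide with the paper's.

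A second, smaller defect: you take the displayed decomposition $F(y)\times F(x_1,\dots,x_n)$ at face value, so that in your extension step ``the only remaining datum'' is a freely chosen central value $\rho(y)=h$. For $H_n$ the meridian generators are not independent of $y$: the product of the $n$ puncture loops of $K_\delta$ is the boundary loop of the disk, which is parallel to the axis and hence conjugate to $y$ in $\mbbS^3\setminus H_n$. (For the necklace this issue does not arise, because there the disk boundary is the removed component $L_c$, so its meridian $x$ genuinely is an independent $S^1$-factor.) Consequently a homomorphism of the punctured-disk group into $C_G(h)$ extends to $\pi_1(\mbbS^3\setminus H_n)$ with $\rho(y)=h$ if and only if its boundary value equals $h$; this boundary constraint is exactly the boundary label concealed in ``labeling as above,'' and it must appear explicitly in your bijectivity argument---otherwise $T_{[y]=[h]}$ is asserted to be a bijection onto a strictly larger space than the one in the statement.
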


\subsection{Representations of the motion groups of the torus links}
\begin{figure}
    \centering
   \begin{tikzpicture}[scale=0.5]
\coordinate(O) at (0,0);
\draw [->] (3,3) arc (30:60:2);
\draw (O) circle (4);
\draw (O) circle (3);
\draw (O) circle (2);
\draw (2,3) node[above right]{$S_1$};
\draw (1.5,2) node[above right]{$S_2$};
\draw (1,1) node[above right]{$S_3$};
\draw (O)--(5.22,3);
\draw (O)--(-4.35,2.5);
\draw (O)--(0,-5);
\filldraw (4.35,2.5) circle(1pt) node[right]{$P$};
\filldraw (5.22,3) circle(1pt) 
node[right]{$O^{\prime}$};
\filldraw (3.5,2) circle(1pt)
node[right]{$B^1_1$};
\filldraw (2.6,1.5) circle(1pt) node[right]{$B^1_2$};
\filldraw (1.75,1) circle(1pt) node[right]{$B^1_3$};
\filldraw (0,-4) circle(1pt)
node[below right]{$B^2_1$};
\filldraw (0,-3) circle(1pt)
node[below right]{$B^2_2$};
\filldraw (0,-2) circle(1pt)
node[below right]{$B^2_3$};
\filldraw (-3.5,2) circle(1pt) node[above]{$B^3_1$};
\filldraw (-2.6,1.5) circle(1pt) node[above]{$B^3_2$};
\filldraw (-1.75,1) circle(1pt) node[above]{$B^3_3$};
\filldraw (0,4.5) circle(1pt) node[right]{$A^1_1$};
\filldraw (0,3.5) circle(1pt) node[right]{$A^1_2$};
\filldraw (0,2.5) circle(1pt) node[right]{$A^1_3$};
\filldraw (0,1.5) circle(1pt) node[right]{$A^1_4$};
\filldraw (3.91,-2.25) circle(1pt) node[right]{$A^2_1$};
\filldraw (3.05,-1.75) circle(1pt) node[right]{$A^2_2$};
\filldraw (2.17,-1.25) circle(1pt) node[right]{$A^2_3$};
\filldraw (1.31,-0.75) circle(1pt) node[right]{$A^2_4$};
\filldraw (-3.91,-2.25) circle(1pt) node[above]{$A^3_1$};
\filldraw (-3.05,-1.75) circle(1pt) node[above]{$A^3_2$};
\filldraw (-2.17,-1.25) circle(1pt) node[above]{$A^3_3$};
\filldraw (-1.31,-0.75) circle(1pt) node[above]{$A^3_4$};
\filldraw (0,0) circle(1pt) node[right]{$O$};
\end{tikzpicture}
    \caption{$D_m$}
    \label{fig:my_label3a}
 \begin{tikzpicture}[scale=0.5]
\draw (0,0) circle (4);
\draw (0,0) circle (3);
\draw (0,0) circle (2);
\draw (-5,0)--(5,0);
\draw (3.5,3.5) node{$\tilde{S}_3$};
\draw (2,2) node[above right]{$\tilde{S}_2$};
\draw (1,1) node[above right]{$\tilde{S}_1$};
\draw[->] (3,3) arc (30:60:1);
\filldraw (5,0) circle(1pt) node[above right]{$O$};
\filldraw (4,0) circle(1pt) node[above right]{$B^1_3$};
\filldraw (3,0) circle(1pt) node[above right]{$B^1_2$};
\filldraw (2,0) circle(1pt) node[above right]{$B^1_1$};
\filldraw (1,0) circle(1pt) node[above right]{$b$};
\filldraw (-4,0) circle(1pt) node[above left]{$C^2_3$};
\filldraw (-3,0) circle(1pt) node[above left]{$C^2_2$};
\filldraw (-2,0) circle(1pt) node[above left]{$C^2_1$};
\filldraw (0,4.5) circle(1pt) node[right]{$\tilde{A}^1_4$};
\filldraw (0,3.5) circle(1pt) node[right]{$\tilde{A}^1_3$};
\filldraw (0,2.5) circle(1pt) node[right]{$\tilde{A}^1_2$};
\filldraw (0,1.5) circle(1pt) node[right]{$\tilde{A}^1_1$};
\filldraw (0,-4.5) circle(1pt) node[right]{$\tilde{A}^2_4$};
\filldraw (0,-3.5) circle(1pt) node[right]{$\tilde{A}^2_3$};
\filldraw (0,-2.5) circle(1pt) node[right]{$\tilde{A}^2_2$};
\filldraw (0,-1.5) circle(1pt) node[right]{$\tilde{A}^2_1$};
\filldraw (0,0) circle(1pt) node[above right]{$O^{\prime}$};
\end{tikzpicture}
    \caption{$D_{m^{\prime}}$}
    \label{fig:my_label4}
\end{figure}

Let $TL_{(p,q)^n}$ be the $n$-component torus link of type $(p,q)$ in $\mbbS^3$ for any coprimes $(p,q),p,q\geq 2$.  Using the parameterization of $\mbbS^3$ as in Sec. \ref{notation} or \cite{goldsmith82}, we have 
$$TL_{(p,q)^n}=\bigcup_{j=1}^n\iota(\{(\frac{1}{j}R_{\frac{qt}{p}}\Xi_p,e^{2\pi it})|0\leq t\leq1\}).
$$
Fig. \ref{fig:my_label3a} illustrates the meridian disk $D_m$ of $\mathbb{C}\times \mbbS^1$ for case $n=4$ and Fig. \ref{fig:my_label4} illustrates the meridian disk $D_{m^{\prime}}$ of $\mbbS^1\times\mathbb{C}$ for case $n=4$. The points $\{A^i_j\},\{\tilde{A}^k_j\}$ are the intersections between the $j$-th component of the torus link and the meridian disks of the first factor of $\mathbb{C}\times \mbbS^1$ and second factor of $\mbbS^1\times\mathbb{C}$, respectively, where $i=1,2,3$ and $k=1,2$.

\begin{mylem}\label{DW:pione}

Let $b$ be a base point of $\mbbS^3\backslash TL_{(3,2)^n}$, then  
$\pi_1(\mbbS^3\backslash TL_{(3,2)^n},b)$ has the following presentation:
\begin{align*}
\pi_1(\mbbS^3\backslash TL_{(3,2)^n},b)&=<x,y,u_i,v_i|x^3=y^2=u_iv_i=v_iu_i,i=1,...,n-1>
\\
&=<u_i,x,y|x^3=y^2,y^2u_i=u_iy^2>
\end{align*}
\end{mylem}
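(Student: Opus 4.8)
The plan is to compute $\pi_1(\mbbS^3\backslash TL_{(3,2)^n})$ by a Seifert-van Kampen argument adapted to the Heegaard splitting $\mbbS^3=V_1\cup_T V_2$ into the two solid tori $V_1=\mathbb{C}\times\mbbS^1$ and $V_2=\mbbS^1\times\mathbb{C}$ of Sec.~\ref{notation}, glued along the Heegaard torus $T$. Write $L=TL_{(3,2)^n}$. First I would isotope the $n$ components of $L$, which by the parameterization sit at radii $1/j$, onto $n$ parallel copies of a single $(3,2)$-curve lying on $T$; these $n$ parallel essential curves cut $T$ into $n$ annuli $R_1,\dots,R_n$. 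The two meridian disks of Figures~\ref{fig:my_label3a} and~\ref{fig:my_label4} record precisely the intersection points of $L$ with the meridian disks of $V_1$ and $V_2$, and I would use them to fix concrete based loops realizing the generators.

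The core of the argument is the following decomposition of $X=\mbbS^3\backslash N(L)$. Set $A=V_1\cap X$ and $B=V_2\cap X$. Since $L$ lies on $T=\partial V_1=\partial V_2$, removing its tubular neighborhood only carves half-tubes into a collar of the boundary, so $A$ and $B$ deformation retract onto the core circles of $V_1$ and $V_2$; thus $\pi_1(A)=\langle x\rangle$ and $\pi_1(B)=\langle y\rangle$. The overlap $A\cap B$ is a bicollar of $T\backslash N(L)$, hence homotopy equivalent to the $n$ \emph{disjoint} annuli $R_1,\dots,R_n$, each with $\pi_1(R_i)=\langle z_i\rangle\cong\mathbb{Z}$. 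The one geometric input to verify is that the core of each $R_i$ is isotopic in $T$ to a $(3,2)$-curve, so that under the two inclusions it maps to $x^3\in\pi_1(A)$ and to $y^2\in\pi_1(B)$ (the curve winds three times around the core of $V_1$ and twice around the core of $V_2$).

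Because $A\cap B$ is disconnected, I would phrase the gluing through the graph-of-groups (equivalently groupoid) form of van Kampen: two vertices carrying $\langle x\rangle$, $\langle y\rangle$ and $n$ edges carrying $\langle z_i\rangle$ with $\alpha_i(z_i)=x^3$ and $\omega_i(z_i)=y^2$. The underlying graph has first Betti number $n-1$. Choosing the spanning tree to be a single edge yields the amalgamation relation $x^3=y^2$, while each of the remaining $n-1$ edges contributes a stable letter $u_i$ together with the relation $u_i x^3 u_i^{-1}=y^2$. Since $x^3=y^2$, this last relation says exactly that $u_i$ commutes with the central fiber class $y^2$, giving
\[
\pi_1(\mbbS^3\backslash TL_{(3,2)^n})=\langle x,y,u_1,\dots,u_{n-1}\mid x^3=y^2,\ y^2u_i=u_iy^2\rangle,
\]
which is the second presentation. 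Introducing for each $i$ a redundant generator $v_i$ by the Tietze relation $u_iv_i=y^2$ turns the commutation relation $u_i^{-1}y^2u_i=y^2$ into $v_iu_i=y^2$, so that $x^3=y^2=u_iv_i=v_iu_i$; this recovers the first presentation.

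I expect the main obstacle to be the bookkeeping forced by the disconnected overlap $A\cap B$: one must use the graph-of-groups version of van Kampen and correctly identify, for each of the $n$ annuli, the two winding numbers $(3,2)$ of its core, together with a coherent choice of spanning tree and stable letters $u_i$. A secondary point requiring care is justifying the deformation retractions of $A$ and $B$ onto the two core circles once the half-tubes along $L$ are removed, and matching the abstract stable letters $u_i$ (and their partners $v_i$) with explicit loops read off from the meridian disks $D_m$ and $D_{m'}$ of Figures~\ref{fig:my_label3a} and~\ref{fig:my_label4}.
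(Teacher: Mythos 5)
Your proposal is correct, but it takes a genuinely different route from the paper. The paper never pushes the link onto a single Heegaard torus and never cuts along it; instead it keeps the $n$ components on concentric tori and deformation retracts the complement onto an explicit $2$-complex spine $E_L'=K_T\cup\bigcup_{i=1}^{n-1}T_i$, where $K_T$ is the ``pinwheel'' (a cylinder of parallel $(3,2)$-curves attached to the two core circles $x',y'$ by $z\mapsto z^3$ and $z\mapsto z^2$, so $\pi_1(K_T)=\langle x,y\mid x^3=y^2\rangle$) and the $T_i$ are the $n-1$ tori separating consecutive components. Each intersection $K_T\cap T_i$ is a single $(3,2)$-circle, so only the basic (connected-intersection) van Kampen theorem is needed; the generators $a_i,b_i$ of $\pi_1(T_i)\cong\mbbZ^2$ give the relations $x^3=y^2=a_i^2b_i^3$, $a_ib_i=b_ia_i$, and the substitution $u_i=a_ib_i$, $v_i=a_ib_i^2$ produces the first presentation directly, the second following by eliminating $v_i$. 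Your Heegaard-splitting argument goes the other way: the graph-of-groups van Kampen (forced by the disconnected intersection, the $n$ annuli) yields the second, HNN-flavored presentation first, with the $v_i$ entering only as formal Tietze generators. Both arguments are sound and give identical presentations; yours is the more classical computation, applies verbatim to any coprime $(p,q)$ and any $n$, and avoids constructing and verifying the retraction onto $E_L'$. What the paper's construction buys, and what your stable letters do not automatically provide, is the explicit geometric realization of the generators — $u_i$ as the $(1,1)$-curve $a_ib_i$ on the torus $T_i$, and the companion complexes $K_\delta$, $K_{u,v}$ — which is exactly what the subsequent computation of the motion-group action (Prop.~\ref{propaction}) and Thm.~\ref{DW:torus} rely on; you flag this identification as residual work, and under your decomposition it would indeed have to be redone by hand.
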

\begin{proof}
First we show that the link complement $E_L=\mbbS^3\backslash TL_{(3,2)^n}$ retracts to a 2-complex $E_L^{\prime}$.

For each $j=1,...,n-1$, a torus $T_j$ in $E_L$ is constructed as follows.
$$T_j=\iota (\{(\frac{1}{j+\frac{1}{2}}e^{i\theta},e^{i\zeta})|0\leq\theta,\zeta\leq2\pi\})
$$
Then any meridian disk $m$ of $\mathbb{C}\times S^1$ intersects $T_j$ at the circle $S_j$ as shown in Fig. \ref{fig:my_label3a}.

Next a 2-complex $K_T$ is constructed:
$$K_T=\bigcup_{j>0}\iota (\{(\frac{1}{j}R_{\frac{2t}{3}}e^{-\frac{\pi i}{3}} {\Xi}_3,e^{2\pi it})|0\leq t\leq1\})\cup x^{\prime}\cup y^{\prime}
$$
Note that $K_T$ is a 2-complex consisting of a continuous family of $(3,2)$-knots, which are parallel to $TL_{(3,2)^n}$ and form an cylinder, and the two core circles $x^{\prime},y^{\prime}$ as shown in Fig. \ref{fig:my_label5}. For any meridian disk $m$ of $\mathbb{C}\times S^1$, the intersection between $K_T$ and $m$ is just the union of the three segments pointing to $O$ as shown in Fig. \ref{fig:my_label3a}. And Fig. \ref{fig:my_label4} illustrates the intersection between $K_T$ and the meridian disk of $S^1\times\mathbb{C}$. Therefore, $K_T$ is the 2-complex obtained by attaching the two boundaries of the cylinder to the two core circles using $z\mapsto z^3$ and $z\mapsto z^2$. 

Set $E_L^{\prime}=\bigcup_{i=1}^{n-1}T_i\cup K_T.$
Retracting in each meridian disk of $\mathbb{C}\times \mbbS^1$ as shown in Fig. \ref{fig:my_label3a} gives rise to a retraction of $E_L$ to $E_L^{\prime}$.

To compute the fundamental group of $E_L^{\prime}$, we choose $b$ to be the base point as shown in Fig. \ref{fig:my_label5}, loops $a_i=\overline{bB^1_i}S_i\overline{bB^1_i}$ and $b_i=\overline{bB^1_i}\tilde{S}_i\overline{B^1_ib}$ to be generators for the fundamental group of $T_i$, and loops $x=\overline{bO}x^{\prime}\overline{OP}$ and $y=\overline{bO^{\prime}}y^{\prime}\overline{O^{\prime}b}$ to be the generators of the fundamental group of $K_T$. The direction of each $S_i$ here is chosen to be counterclockwise.

Since $K_T\cap T_i$ is a $(3,2)$-knot on $T_i$, by Van-Kampen's theorem, we arrive at
$$\pi_1(\mbbS^3\backslash T_{(2,3)^n},b)=<x,y,a_i,b_i|x^3=y^2=a_i^2b_i^3,a_ib_i=b_ia_i,i=1,...,n-1>.
$$
Setting $u_i=a_ib_i,v_i=a_ib_i^2$ leads to the desired presentation.
\end{proof}
\begin{figure}
    \centering
   \begin{tikzpicture}[scale=0.7]
\draw (0,0) ellipse [x radius=1.5,y radius=0.5];
\draw (0,3) ellipse [x radius=1.5,y radius=0.5];
\draw (0,6) ellipse [x radius=1.5,y radius=0.5];
\draw (0,5) ellipse [x radius=1.5,y radius=0.5];
\filldraw (1.5,5) circle(1pt) node[right]{$B^1_{i+1}$};
\draw (0,6.25) node[above]{$x^{\prime}$};
\draw (0,0.25) node[above]{$y^{\prime}$};
\filldraw (-0.75,3.5) circle(1pt) node[above]{$B^2_i$};
\draw (-1.5,0)--(-1.5,3);
\draw (1.5,0)--(1.5,3);
\draw (-1.5,3)--(-1.5,6);
\draw (1.5,3)--(1.5,6);
\draw (-1.5,3)--(1.5,0);
\draw (-0.75,3.5)--(1.5,6);
\draw (0,2.5)--(1.5,6);
\filldraw (1.5,3) circle(1pt) node[right]{$B^1_i$};
\filldraw (0,2.5) circle(1pt) node[above right]{$B^3_i$};
\filldraw (-1.5,3) circle(1pt) node[right]{$C^2_i$};
\filldraw (1.5,6) circle(1pt) node[right]{$O$};
\filldraw (1.5,0) circle(1pt) node[right]{$O^{\prime}$};
\filldraw (1.5,1.5) circle(1pt) node[right]{$b$};
\draw [->](0.5,7) arc (80:100:3);
\draw [->](0.5,1) arc (80:100:3);
\end{tikzpicture}
    \caption{$K_T$}
    \label{fig:my_label5}
\end{figure}

To describe the action of $\mathcal{M}(TL_{(3,2)^n}\subset \mbbS^3)$ on $\pi_1(\mbbS^3\backslash TL_{(3,2)^n},b)$, we use the presentation of  $\mathcal{M}(TL_{(3,2)^n}\subset \mbbS^3)$ in Thm. 8.7 \cite{goldsmith82}:
the generating motions for $\mathcal{M}(TL_{(3,2)^n}\subset \mbbS^3)$ are $\{\sigma_i\}_{i=1}^{n-1}$, and $\{\rho_i\}_{i=1}^n$\footnote{Our $r_i$ in Prop. \ref{presentation}}, where $\sigma_i$ interchanges the $i$-th and $(i+1)$-th components of the torus link, while $\rho_i$ rotates the $i$-th component about the  $x^{\prime}$-axis by $e^{\frac{iq\pi}{p}}$ as in Fig. \ref{fig:my_label4}.

\begin{mypro}\label{propaction}
The action of $\mathcal{M}(TL_{(3,2)^n}\subset \mathbb{S}^3)$ on $\pi_1(\mathbb{S}^3\backslash TL_{(3,2)^n},b)$ by pre-composition is as follows. Let $u_0=y,u_n=x$ formally, then 
\begin{align*}
\sigma_i(u_j)
=\left\{
\begin{aligned}
u_{i-1}u_i^{-1}u_{i+1}&&j=i
\\
u_j&&j\neq i
\end{aligned}
\right.
\end{align*}
$$\sigma_i(x)=x
$$
$$\sigma_i(y)=y
$$
\begin{align*}
\rho_i(u_j)
=\left\{
\begin{aligned}
(u_{i-1}u_i^{-1})u_j(u_{i-1}u_i^{-1})^{-1}&&j\geq i
\\
u_j&&j<i
\end{aligned}
\right.;
\end{align*}
\begin{align*}
\rho_i(x)=(u_{i-1}u_i^{-1})x(u_{i-1}u_i^{-1})^{-1}
\end{align*}
\begin{align*}
\rho_i(y)=y
\end{align*}
\end{mypro}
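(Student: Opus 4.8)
The plan is to prove the formulas by a direct geometric computation, tracking how each generating motion transports the loops that represent the generators of $\pi_1(\mbbS^3\backslash TL_{(3,2)^n},b)$ fixed in Lemma \ref{DW:pione}. Recall from that proof that the generating loops are $a_i=\overline{bB^1_i}S_i\overline{bB^1_i}$ and $b_i=\overline{bB^1_i}\tilde{S}_i\overline{B^1_ib}$ on each torus $T_i$, together with $x=\overline{bO}x^{\prime}\overline{OP}$ and $y=\overline{bO^{\prime}}y^{\prime}\overline{O^{\prime}b}$ around the two core circles, and that $u_i=a_ib_i$, $v_i=a_ib_i^2$. Since $\mcM(TL_{(3,2)^n}\subset\mbbS^3)$ acts on $\pi_1$ through the induced automorphism of the diffeomorphism $h_1=\partial(h_t)$, it suffices to compute the image of each generating loop under the generators $\sigma_i$ and $\rho_i$ listed in [Thm 8.7 \cite{goldsmith82}] and then re-express the result in the generators $u_j,x,y$ using the relations $x^3=y^2$ and $y^2u_i=u_iy^2$ of Lemma \ref{DW:pione}.

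First I would treat $\sigma_i$, which interchanges the $i$-th and $(i+1)$-th components and is supported in the region between the tori $T_{i-1}$ and $T_{i+1}$. The core circles $x^{\prime},y^{\prime}$ lie outside the support of this motion, so the loops $x,y$ are carried to themselves, giving $\sigma_i(x)=x$ and $\sigma_i(y)=y$ at once. For the loops $u_j$, the motion fixes every circle $S_j$ with $j\neq i$ up to isotopy rel basepoint, hence $\sigma_i(u_j)=u_j$; for $j=i$ the half-twist drags $S_i$ across its two neighboring strands, and reading off the resulting word from the meridian-disk picture in Fig. \ref{fig:my_label3a} produces the pattern $u_{i-1}u_i^{-1}u_{i+1}$, where the boundary conventions $u_0=y$ and $u_n=x$ record that the two extreme strands are the core circles $y^{\prime}$ and $x^{\prime}$.

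Next I would analyze $\rho_i$, the rotation of the $i$-th component about the $x^{\prime}$-axis by $e^{\frac{iq\pi}{p}}$. The essential point is that as this component is rotated it sweeps once in the fiber direction and returns, dragging through a full conjugation every loop $S_j$ that lies on the same side as the moving strand (those with $j\geq i$) while leaving the loops with $j<i$ undisturbed. Identifying the conjugating element with the loop $u_{i-1}u_i^{-1}$ swept out by the rotating component yields $\rho_i(u_j)=(u_{i-1}u_i^{-1})u_j(u_{i-1}u_i^{-1})^{-1}$ for $j\geq i$ and $\rho_i(u_j)=u_j$ for $j<i$; the same sweep conjugates the $x^{\prime}$-core loop $x$ about whose axis the rotation occurs, while fixing $y$, giving the stated formulas for $\rho_i(x)$ and $\rho_i(y)$.

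The main obstacle will be the bookkeeping in the $\rho_i$ computation: pinning down precisely which conjugating loop the rotation produces and verifying that it is exactly $u_{i-1}u_i^{-1}$ rather than a word differing from it by the relations $x^3=y^2$ and $y^2u_i=u_iy^2$. This demands rigorous tracking of the basepoint $b$ and of the connecting arcs $\overline{bB^1_i}$, $\overline{bO}$, $\overline{bO^{\prime}}$ as the rotated strand passes them, using the pictures in Fig. \ref{fig:my_label4} and Fig. \ref{fig:my_label5}. As a consistency check I would confirm that the computed automorphisms respect all defining relations among the $\sigma_i$ and $\rho_i$ in [Thm 8.7 \cite{goldsmith82}]; once the conjugating loop is correctly identified, rewriting the transported loops back into the generators $u_j,x,y$ is routine.
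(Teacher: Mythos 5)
Your high-level plan---compute the pre-composition action of Goldsmith's generators $\sigma_i,\rho_i$ on explicit generating loops and rewrite the result in terms of $u_j,x,y$---is the same skeleton as the paper's proof, and your qualitative predictions (what is fixed, conjugation by the meridian $u_{i-1}u_i^{-1}$) match the stated formulas. However, the proposal omits the one device that makes the computation actually executable. The paper never tracks the torus generators $a_i,b_i$ through the three-dimensional motion. Instead it introduces a second $2$-complex $K_\delta$, the cylinder swept out by the $(1,1)$-curves together with the two cores $x',y'$, and exploits the fact that a $(1,1)$-curve and a $(3,2)$-curve on the same torus have intersection number one: each component of $TL_{(3,2)^n}$ meets $K_\delta$ transversely in a single point $P_i$. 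Hence every generator lives inside the punctured cylinder ($u_i=a_ib_i$ is homotopic to the loop $\tilde u_i$ around the $(1,1)$-curve $K_i\subset T_i$, while $x,y$ are the boundary circles), and the motions can be chosen (keeping each component on a torus of the family) so that they restrict to genuine surface-braid moves of the punctures: $\tilde\sigma_i$ interchanges $P_i,P_{i+1}$ and $\tilde\rho_i$ rotates $P_i$ once around the cylinder. All the words in the Proposition are then read off by concatenating arcs in the two-dimensional picture of Fig.~\ref{fig:my_label7}.

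Without this reduction, the two substitutes you offer have genuine gaps. First, for $\sigma_i(u_i)$ you propose to \lq\lq read off the resulting word from the meridian-disk picture in Fig.~\ref{fig:my_label3a}"; this cannot work, because $u_i=a_ib_i$ winds in \emph{both} the meridian and longitude directions of $T_i$, so it is not visible as a curve in $D_m$: tracking $S_i$ there only controls $a_i$, and nothing guarantees that the images of $a_i$ and $b_i$ are individually nice words---only their product is. Second, your argument \lq\lq the motion fixes every circle $S_j$ with $j\neq i$, hence $\sigma_i(u_j)=u_j$" conflates the circles $S_j$ with the loops $u_j$: the loops include the connecting arcs $\overline{bB^1_j}$, and for $j>i$ these arcs pass radially through the very region where the two components are exchanged (each component crosses every meridian disk three times), so invariance of $u_j$ is not a support statement. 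It requires either a careful angular choice of the isotopy, or, as in the paper, restriction to $K_\delta$, where $\tilde\sigma_i$ is supported in a disk around $P_i,P_{i+1}$ disjoint from all other generators. The same difficulty affects your $\rho_i$ computation, which you defer to \lq\lq rigorous tracking": absent an invariant $2$-complex carrying all the generators and converting the motions into punctured-surface braids, that tracking is not fillable by the means you describe.
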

\begin{figure}
    \centering
    \begin{tikzpicture}[scale=0.5]
\draw (0,0) circle (3);
\draw (7,0) circle (3);
\draw (0,-7) circle (3);
\draw (7,-7) circle (3);
\draw (3.2,0)--(3.8,0)[->];
\draw (3.2,-7)--(3.8,-7)[->];
\filldraw (0,0) circle(1pt)node[right]{$O$};
\filldraw (0,2.5) circle(1pt);
\filldraw (0,2) circle(1pt);
\filldraw (2.16,-1.25) circle(1pt);
\filldraw (1.73,-1) circle(1pt);
\filldraw (-2.16,-1.25) circle(1pt);
\filldraw (-1.73,-1) circle(1pt);
\filldraw (4.5,0) circle(1pt);
\filldraw (5,0) circle(1pt);
\filldraw (7,0) circle(1pt)node[right]{$O$};
\filldraw (0,-4.5) circle(1pt);
\filldraw (0,-5) circle(1pt);
\filldraw (2.16,-8.25) circle(1pt);
\filldraw (1.73,-8) circle(1pt);
\filldraw (-2.16,-8.25) circle(1pt);
\filldraw (-1.73,-8) circle(1pt);
\filldraw (4.5,-7) circle(1pt);
\filldraw (5,-7) circle(1pt);
\filldraw (0,-7) circle(1pt)node[right]{$O$};
\filldraw (7,-7) circle(1pt)node[right]{$O$};
\draw (0,-3.2)node{$D_m$};
\draw (0,-10.2)node{$D_m$};
\draw (7,-3.2)node{$K_{\delta}$};
\draw (7,-10.2)node{$K_{\delta}$};
\draw (3.5,-3.2)node{$\sigma_i\mapsto\tilde{\sigma}_i$};
\draw (3.5,-10.2)node{$\rho_i\mapsto\tilde{\rho}_i$};
\draw [->,thick](0,2) arc (-90:75:0.25);
\draw [->,thick](0,2.5) arc (90:255:0.25);
\draw [->,thick](2.16,-1.25) arc (-30:135:0.25);
\draw [->,thick](1.73,-1) arc (150:315:0.25);
\draw [->,thick](-2.16,-1.25) arc (-150:15:0.25);
\draw [->,thick](-1.73,-1) arc (30:195:0.25);
\draw [->,thick](5,0) arc (0:165:0.25);
\draw [->,thick](4.5,0) arc (180:345:0.25);
\draw [->,thick](2.16,-8.25) arc (-30:80:2.5);
\draw [->,thick](0,-4.5) arc (90:200:2.5);
\draw [->,thick](-2.16,-8.25) arc (210:320:2.5);
\draw [->,thick](4.5,-7) arc (-180:170:2.5);
\end{tikzpicture}
    \caption{Motion of the (3,2)-torus link}
    \label{fig:my_label6}
\end{figure}
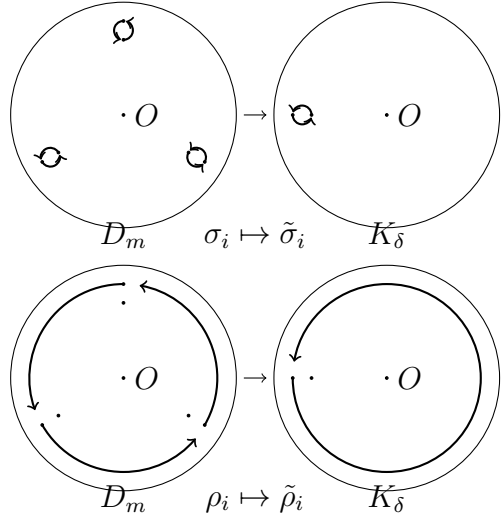
\begin{figure}
    \centering
    \begin{tikzpicture}[scale=0.5]
\draw (0,0) circle (6);
\draw (0,0) circle (4);
\draw (0,0) circle (3);
\draw (0,0) circle (2);
\draw (0,0) circle (1);
\draw (1,0)--(6,0);
\draw (1,1)node{$x^{\prime}$};
\filldraw (1,0) circle(1pt) node[above right]{$O$};
\filldraw (2,0) circle(1pt) node[above right]{$B^{1}_3$};
\filldraw (3,0) circle(1pt) node[above right]{$B^{1}_2$};
\filldraw (4,0) circle(1pt) node[above right]{$B^{1}_1$};
\filldraw (5,0) circle(1pt) node[above right]{$b$};
\filldraw (6,0) circle(1pt) node[above right]{$O^{\prime}$};
\draw (3,5)node{$y^{\prime}$};
\filldraw (-1.5,0) circle(1pt) node[above right]{$P_4$};
\filldraw (-2.5,0) circle(1pt) node[above right]{$P_3$};
\filldraw (-3.5,0) circle(1pt) node[above right]{$P_2$};
\filldraw (-4.5,0) circle(1pt) node[above right]{$P_1$};
\draw (0,5)node{$+$};
\draw (0,-5)node{$-$};
\draw (2,3) node[above right]{$K_1$};
\draw (1.5,2) node[above right]{$K_2$};
\draw (1,1) node[above right]{$K_3$};
\end{tikzpicture}
    \caption{$K_{\delta}$}
    \label{fig:my_label7}
\end{figure}
\begin{proof}
First we construct another 2-complex $K_{\delta}$ in $\mbbS^3$ as follows.
$$K_{\delta}=\bigcup_{j>0}\iota (\{(\frac{1}{j}R_te^{\frac{-\pi i}{3}}\Xi_1,e^{2\pi it})|0\leq t\leq1)\})\cup x^{\prime}\cup y^{\prime}
$$
The $2$-complex $K_{\delta}$ consists of a continuous family of $(1,1)$-knots, which form a cylinder, and the two cores $x^{\prime},y^{\prime}$. The intersection between $K_{\delta}$ and the meridian disk $D_m$ in Fig. \ref{fig:my_label3a} is just the segment $\overline{bO}$. Thus $K_\delta$ is a cylinder with $x^{\prime},y^{\prime}$ as two boundaries.

Note that the $i$-th component of $TL_{(3,2)^n}$ is on the torus $T^{\prime}_i$, where
$$T^{\prime}_i=\iota (\{\frac{1}{i}e^{i\theta},e^{i\zeta})|0\leq\theta,\zeta\leq2\pi\}).
$$

Since the intersection number between the $(1,1)$ and $(3,2)$ knots on the same torus is $1$, each of the $i$-th component of $TL_{(3,2)^n}$ intersects with $K_\delta$ exactly at $1$ point $P_i$ transversely on the torus $T^{\prime}_i$, thus $K_\delta \backslash TL_{(3,2)^n}$ can be represented by Fig. \ref{fig:my_label7}, where the circles $K_i$ are the $(1,1)$ knots on the torus $T_i$ and $P_i$ are the intersections between $TL_{(3,2)^n}$ and $K_\delta$. 

Choose $b$ to be the base point of $\mathbb{S}^3 \backslash TL_{(3,2)^n}$. 
Set $\tilde{u}_i$ to be the loops $\overline{bB^1_i}K_i\overline{B^1_ib}$. Since $K_i$ is the $(1,1)$ knot on $T_i$, it follows that  $\tilde{u}_i=a_ib_i=u_i$. Thus it suffices to describe the action of $\mathcal{M}^+(TL_{(3,2)^n}\subset \mathbb{S}^3)$ on $\tilde{u}_i,x,y$.

Consider the restriction of the action of $\sigma_i,\rho_j$ as shown in Fig. \ref{fig:my_label6} on $K_\delta$. For the interchange $\sigma_i$, we choose an isotopy $H_t$ such that each component is always on the same torus during the isotopy. Since the intersection number between $(1,1)$ and $(3,2)$ on the same torus is $1$, $\sigma_i$ induces $\tilde{\sigma}_i$ on $K_\delta$ which interchanges the $i$-th and $(i+1)$-th points as shown in Fig. \ref{fig:my_label6}. For $\rho_i$, according to Fig. \ref{fig:my_label6}, it induces $\tilde{\rho}_i$ on $K_\delta$ which rotates the $i$-th point by $2\pi$ along the direction of $K_i$. 

Next we compute the action of $\tilde{\rho_i}$ on $\tilde{u_i},x,y$. 
For simplicity, the direction of arcs is shown by $+$ and $-$ on Fig. \ref{fig:my_label7}. The path formed by the composition of arcs along points $X_1,...,X_k$ is denoted by $(X_1...X_k)$. For uniformity of notation, we set $\tilde{u}_0$ to be $y$, and $\tilde{u}_n$ to be $x$.

For any $i=1,...,n-1$, if $j\neq i$, then the motion $\tilde{\sigma}_i$ does not touch $\tilde{u}_j$. Thus  
$$\tilde{\sigma}_i(\tilde{u}_j)=\tilde{u}_j.
$$
If $j=i$, then 
\begin{align*}
\tilde{\sigma}_i(\tilde{u}_i)&=(bB^1_{i-1}+-B^1_{i-1}B^1_i-+B^1_iB^1_{i+1}+-B^1_{i+1}b)
\\
&=(bB^1_{i-1}+-B^1_{i-1}B^1_i-+B^1_ib)\cdot(bB^1_{i+1}+-B^1_{i+1}b)
\\
&=(bB^1_{i-1}+-B^1_{i-1}b)\cdot(bB^1_i-+B^1_ib)\cdot(bB^1_{i+1}+-B^1_{i+1}b)
\\
&=\tilde{u}_{i-1}\tilde{u}_i^{-1}\tilde{u}_{i+1}
\end{align*}
Since $\tilde{\sigma}_i$ does not touch $x,y$ for any $i=1,...,n-1$, hence 
$$\tilde{\sigma}_i(x)=x,\tilde{\sigma}_i(y)=y
$$
For any $i=1,...,n$, when $j<i$, $\tilde{\rho}_i$ does not touch $\tilde{u}_j$. Thus $\tilde{\rho}_i(\tilde{u}_j)=\tilde{u}_j$.
\\
If $j\geq i$, then 
\begin{align*}
\tilde{\rho}_i(\tilde{u}_j)&=
(bB^1_{i-1}+-B^1_{i-1}B^1_i-+B^1_iB^1_j+-B^1_jb)
\\
&=(bB^1_{i-1}+-B^1_{i-1}B^1_i-+B^1_iP)\cdot(PB^1_j+-B^1_jb)
\\
&\cdot(bB^1_i+-B^1_iB^1_{i-1}-+B^1_{i-1}b)
\\
&=\tilde{u}_{i-1}\tilde{u}_i^{-1}u_j(\tilde{u}_{i-1}\tilde{u}_i^{-1})^{-1}
\end{align*}
For any $i=1,...,n$, since $\rho_i$ does not touch $y$, so  $\tilde{\rho}_i(y)=y$.

For $x$, similarly 
$$\tilde{\rho}_i(x)=\tilde{u}_{i-1}\tilde{u}_i^{-1}x(\tilde{u}_{i-1}\tilde{u}_i^{-1})^{-1}.
$$
\end{proof}

By Sec. \ref{DW:extended}, to describe explicitly the representations of the motion groups of the torus links, it suffices to compute the action of the representatives of the generating motions $\{\sigma_i,\rho_i\}$ in \cite{goldsmith82}.

In the following, for easiness of notation, $u_i$ is identified with $\tilde{u}_i$. Given pure flux labels $(g,h)$ on $TL_{(3,2)^n}$ such that $g$, $h$ correspond to the meridian $m_i$ and longitude $l_i$, respectively, on the boundary, and the paths $A_i$ connecting $b$ to $P_i$ on the boundary of $\mathbb{S}^3\backslash TL_{(3,2)^n}$, then the following holds:
$$A_im_iA_i^{-1}=u_{i-1}u_i^{-1},
A_il_iA_i^{-1}=x^3.
$$
Thus for any $[(\rho,a_1,...,a_n)]\in V_G(\mathbb{S}^3\backslash TL_{(3,2)^n)};(g,h))$, we can find a representative $(x,y,u_i,a_i)$ satisfying the following conditions.  For simplicity, elements are identified with their images of $\varphi$.
$$x^3=y^2,
yu_1^{-1}=a_1ga_1^{-1},$$
$$u_1u_2^{-1}=a_2ga_2^{-1},\cdots,
u_{n-1}x^{-1}=a_nga_n^{-1},y^2=a_iha_i^{-1}.
$$
Consider the $n$-punctured cylinder $M=K_\delta \backslash \{P_i\}$ as constructed above. For any finite group $H$, labeling the boundary of $K_\delta \backslash \{P_i\}$ by $(g_i,g_x,g_y)$, we obtain 
$$V_{H}(K_\delta \backslash \{P_i\};(g_i,g_x,g_y))=\mathbb{C}\{[(\tilde{x},\tilde{y},\tilde{u}_i,\tilde{a}_i,\tilde{a}_x,\tilde{a}_y)]\},
$$
where $(\tilde{x},\tilde{y},\tilde{u_i},\tilde{a_i},\tilde{a_x},\tilde{a_y})$ satisfy the following conditions.
$$\tilde{x}=\tilde{a}_x\tilde{g}_x\tilde{a}_x^{-1}
,
\tilde{y}=\tilde{a}_y\tilde{g}_y\tilde{a}_y^{-1}
,$$
$$
\tilde{y}\tilde{u}_1^{-1}=\tilde{a}_1\tilde{g}_1\tilde{a}_1^{-1}
,
\tilde{u}_1\tilde{u}_2^{-1}=\tilde{a}_2\tilde{g}_2\tilde{a}_2^{-1}
,
\cdots
,
\tilde{u}_{n-1}\tilde{x}^{-1}=\tilde{a}_n\tilde{g}_n\tilde{a}_n^{-1}
$$
and $[-]$ denotes the conjugation class of homomorphisms.

Next we construct a bijection $\Psi_{[x],[y]}$ for $[x],[y]\in [G]$, where $[G]$ is set of the conjugation class of $G$, from
$$S_{[x],[y]}=\{[(x,y,u_i,a_i)]|x\in [x],y\in [y]\}_{\mbbS^3\backslash TL_{(2,3)^n};G;(g,h)}$$ to
$$F_{[x],[y]}=\{[(\tilde{x},\tilde{y},\tilde{u}_i,\tilde{a}_i,\tilde{a}_x,\tilde{a}_y)]\}_{K_{\delta} \backslash \{P_i\};Z(y_0^2);(a_{i,0}ga_{i,0}^{-1},x_0,y_0)},$$ where $[(x_0,y_0,u_{i,0},a_{i,0})]\in S_{[x],[y]}$ and $Z(y_0^2)$ is the centralizer $C_G(y_0^2)$.

Since $y,y_0\in [y]$, any element in $S_{[x],[y]}$ can be represented by $[(x,y_0,u_i,a_i)]$. Then we define
$$\Psi_{[x],[y]}([(x,y_0,u_i,a_i)])=([(x,y_0,u_i,a_ia_{i,0}^{-1},g_x,1)]),
$$
where $x=g_xx_0g_x^{-1}$ as $x,x_0\in [x]$.
\begin{mylem}
$\Psi_{[x],[y]}$ is well-defined and bijective.
\end{mylem}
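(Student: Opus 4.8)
The plan is to exhibit $\Psi_{[x],[y]}$ as an honest map on equivalence classes and then to invert it by reversing the substitutions $a_i\mapsto a_i a_{i,0}^{-1}$ and $x\mapsto g_x$. The conceptual point throughout is that the longitudinal element $y_0^2=x_0^3$ is the monodromy/fiber direction of the fibration, so everything in sight is forced to commute with $y_0^2$; the target homomorphisms take values in $Z(y_0^2)=C_G(y_0^2)$ precisely for this reason, and the relation $x^3=y^2$ of Lemma \ref{DW:pione}, which is absent from the free group of the punctured fiber, is recorded instead by membership in this centralizer.

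First I would record the key membership statement: for $(x,y_0,u_i,a_i)\in S_{[x],[y]}$ the images $x,y_0,u_i$ and the new conjugators $g_x$, $a_i a_{i,0}^{-1}$ all lie in $Z(y_0^2)$. Indeed $x^3=y^2$ gives $x^3=y_0^2$, so $x$ commutes with $y_0^2$; the relation $y^2u_i=u_iy^2$ gives $u_i\in Z(y_0^2)$; $y_0$ commutes with $y_0^2$ trivially; $x=g_xx_0g_x^{-1}$ together with $x^3=y_0^2=x_0^3$ forces $g_xy_0^2g_x^{-1}=y_0^2$; and $y_0^2=a_iha_i^{-1}=a_{i,0}ha_{i,0}^{-1}$ yields $a_ia_{i,0}^{-1}\in Z(y_0^2)$. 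Since $\pi_1(K_\delta\backslash\{P_i\})$ is free on $x,y,u_1,\dots,u_{n-1}$ (the annular $2$-cells of $K_\delta$ that would impose $x=y$ are punctured by the $P_i$), the assignment $x\mapsto x$, $y\mapsto y_0$, $u_i\mapsto u_i$ defines a homomorphism into $Z(y_0^2)$, and substituting $\tilde g_i=a_{i,0}ga_{i,0}^{-1}$, $\tilde g_x=x_0$, $\tilde g_y=y_0$, $a_i=(a_ia_{i,0}^{-1})a_{i,0}$ shows the defining boundary equations of $F_{[x],[y]}$ are exactly the source equations. Thus $\Psi_{[x],[y]}$ lands in $F_{[x],[y]}$.

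For well-definedness and injectivity I would use that a class in either set is determined by the conjugacy class of its underlying homomorphism. Any two representatives of a class in $S_{[x],[y]}$ normalized so that $y\mapsto y_0$ differ by conjugation by some $c$ with $cy_0c^{-1}=y_0$, i.e. $c\in C_G(y_0)\subseteq Z(y_0^2)$; such a $c$ conjugates the associated target homomorphisms inside $Z(y_0^2)$, so they give the same class in $F_{[x],[y]}$. Reading this backwards gives injectivity: if two normalized source data yield $Z(y_0^2)$-conjugate target homomorphisms, the conjugating element $d$ satisfies $dy_0d^{-1}=y_0$, hence $d\in C_G(y_0)$ conjugates the source homomorphisms themselves (they are determined by their values on the generators $x,y,u_i$), producing the same class in $S_{[x],[y]}$.

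The remaining and most delicate step is surjectivity, where the inverse must recreate the relation $x^3=y^2$. Given a class in $F_{[x],[y]}$ I would normalize its representative by conjugating with $\tilde a_y^{-1}\in Z(y_0^2)$ so that $\tilde y=y_0$ and $\tilde a_y=1$, which is legitimate because $\tilde y=\tilde a_y y_0\tilde a_y^{-1}$ already lies in the $Z(y_0^2)$-class of $y_0$. Setting $x=\tilde x$, $u_i=\tilde u_i$, $a_i=\tilde a_i a_{i,0}$, the assignment $x\mapsto\tilde x$, $y\mapsto y_0$, $u_i\mapsto\tilde u_i$ respects the two trefoil-type relations: $y^2u_i=u_iy^2$ because $\tilde u_i\in Z(y_0^2)$, and $x^3=y^2$ because $\tilde x=\tilde a_x x_0\tilde a_x^{-1}$ with $\tilde a_x\in Z(y_0^2)$ forces $\tilde x^3=\tilde a_x x_0^3\tilde a_x^{-1}=\tilde a_x y_0^2\tilde a_x^{-1}=y_0^2$. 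A final check that the pure-flux data $(g,h)$ are reproduced (using $\tilde a_i\in Z(y_0^2)$ and $a_{i,0}ha_{i,0}^{-1}=y_0^2$) places the resulting homomorphism in $S_{[x],[y]}$ mapping to the given class. I expect this surjectivity bookkeeping to be the main obstacle, as it is exactly the place where the membership $\tilde a_x\in Z(y_0^2)$ converts the topological input—the fibration making $x^3=y^2$ the central longitudinal class—into the group-theoretic statement that $Z(y_0^2)$ encodes the missing relation.
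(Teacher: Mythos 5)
Your proof is correct and follows essentially the same route as the paper's: the same membership checks ($g_x,\,a_ia_{i,0}^{-1}\in Z(y_0^2)$ deduced from $x^3=y_0^2$ and the longitude condition), the same normalization of representatives by conjugating elements fixing $y_0$ (hence lying in $C_G(y_0)\subseteq Z(y_0^2)$) for well-definedness and injectivity, and the same surjectivity construction (conjugate by $\tilde{a}_y^{-1}$ to arrange $\tilde{y}=y_0$, then take source conjugators $\tilde{a}_ia_{i,0}$). The only difference is that you spell out details the paper leaves implicit, such as $x,u_i\in Z(y_0^2)$ and the freeness of $\pi_1$ of the punctured cylinder, which makes the image land where it should.
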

\begin{proof}
First we show $T_{[x],[y]}$ is well-defined. 

Since $x=g_xx_0g_x^{-1}$, so  $y_0^2=x^3=g_xx_0^3g_x^{-1}=g_xy_0^2g_x^{-1}$. Thus $g_x\in Z(y_0^2)$.
Since $a_i^{-1}y_0^2a_i=a_{i,0}^{-1}y_0^2a_{i,0}$, hence  $a_{i,0}a_i^{-1}y_0^2a_ia_{i,0}^{-1}=y_0^2$. Thus $a_ia_{i,0}^{-1}\in Z(y_0^2)$.
For any $[(x^{\prime},y_0,u_i^{\prime},a_i^{\prime}]=[(x,y_0,u_i,a_i)]$, $(x^{\prime},y_0,u_i^{\prime})=g(x,y_0,u_i)g^{-1}$. Thus $g\in\Z(y_0^2)$. It follows that their images are the same.

Now we show that $T_{[x],[y]}$ is injective. For any $[(x,y_0,u_i,a_i)],[(x^{\prime},y_0,u_i^{\prime},a_i^{\prime})]$ such that
$$[(x,y_0,u_i,a_ia_{i,0}^{-1},g_x,1)]=[(x^{\prime},y_0,u_i^{\prime},a_i^{\prime}a_{i,0}^{-1},g_{x^{\prime}},1)]
$$
there exists a $g\in Z(y_0^2)$ such that $(x,y_0,u_i)=g(x^{\prime},y_0,u_i^{\prime})g^{-1}$. Thus $[(x,y_0,u_i,a_i)]=[(x^{\prime},y_0,u_i^{\prime},a_i^{\prime})]$.

Finally we show $T_{[x],[y]}$ is surjective. For any $[(x,y,u_i,a_i,a_x,a_y)]\in A_{[x],[y]}$, we have $y=a_yy_0a_y^{-1}$. Choose a representative $(x,y_0,u_i,a_i,a_x,1)$, and consider $(x,y_0,u_i,a_ia_{i,0})$, we directly check that $(x,y_0,u_i,a_ia_{i,0})$ satisfies the compatible conditions for $S_{[x],[y]}$. It follows that  $$T([(x,y_0,u_i,a_ia_{i,0})])=[(x,y_0,u_i,a_i,a_x,1)],$$ which completes the  proof.
\end{proof}

By Prop. \ref{propaction}, $S_{[x],[y]}$ is preserved by the motion group of $TL_{(3,2)^n}$ in $\mbbS^3$. Thus $\mathbb{C}S_{[x],[y]}$ is invariant under the action of the motion group from DW theory. Furthermore, the following diagram commutes. 
\begin{mypro}
\begin{enumerate}
    \item 
$$\xymatrix{
    \mathcal{M}(TL_{(3,2)^n}\subset \mbbS^3)\ar[d]^{\rho^{(3+1)-DW}}&\mathcal{B}(\textrm{cylinder}; \textrm{$n$ pts})\ar[l]^{i} \ar[d]^{\rho^{(2+1)-DW}}\\
    GL(\mathbb{C}S_{[x],[y]})&
    \; GL(V_{Z(y_0^2)}(K_{\delta} \backslash \{P_i\};\text{labeling as above}))\ar[l]_{\Psi^{-1}_{[x],[y]}\ \ \ \ \ \ \ \ \ \ \ \ \ \ \ \ \ }
   },
$$
where $i$ is the inclusion and $\rho^{DW}$ is the representation defined as above. Moreover, the images of the representations are the same: $Im(\rho_{G}^{(3+1)-DW})=\Psi^{-1}_{[x],[y]}(Im(\rho_{Z(y_0^2)}^{(2+1)-DW}))$

\item 
As representations of the motion groups from the DW TQFTs associated to groups $G$ and $C_G(y_0^2)$, respectively,
\begin{align*}
    V_G(\mbbS^3\backslash TL_{(3,2)^n};(g,h))&=\bigoplus_{[x],[y]}\mathbb{C}S_{[x],[y]}
    \\
    &=\bigoplus_{[x],[y]} V_{C_G(y_0^2)}((\text{cylinder} \backslash \textrm{$n$ pts});(a_{i,0}ga_{i,0}^{-1},x_0,y_0))
\end{align*}

\end{enumerate}
\end{mypro}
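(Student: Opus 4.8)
The plan is to derive both parts from the bijection $\Psi_{[x],[y]}$ of the preceding lemma together with the explicit pre-composition formulas of Proposition \ref{propaction}; throughout I abbreviate the $(3+1)$- and $(2+1)$-DW motion-group representations by $\rho^{(3+1)}$ and $\rho^{(2+1)}$. First I would record the invariant decomposition underlying Part (2). Any class $[(\rho,a_1,\dots,a_n)]$ in $V_G(\mbbS^3\backslash TL_{(3,2)^n};(g,h))$ determines conjugacy classes $[x]=[\rho(x)]$ and $[y]=[\rho(y)]$ of the images of the two core generators supplied by Lemma \ref{DW:pione}, so $V_G$ is the direct sum $\bigoplus_{[x],[y]}\mathbb{C}S_{[x],[y]}$, the sum running over those pairs $([x],[y])$ admitting representatives with $x^3=y^2$. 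Proposition \ref{propaction} shows each summand is motion-group invariant: the generators satisfy $\sigma_i(y)=\rho_i(y)=y$ and $\sigma_i(x)=x$, while $\rho_i(x)=(u_{i-1}u_i^{-1})\,x\,(u_{i-1}u_i^{-1})^{-1}$ is conjugate to $x$, so pre-composition fixes $[y]$ and preserves $[x]$. This already gives the first equality in Part (2) and the invariance of $\mathbb{C}S_{[x],[y]}$.

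The core step is the \emph{equivariance} of $\Psi_{[x],[y]}$. Here I would invoke the geometric content of the proof of Proposition \ref{propaction}: under the inclusion $i$ the cylinder braid generators $\tilde\sigma_i,\tilde\rho_i$ of $\mathcal{B}(K_\delta\backslash\{P_i\})$ are exactly the induced motions of the generators $\sigma_i,\rho_i$ of $\mathcal{M}(TL_{(3,2)^n}\subset\mbbS^3)$, and under the identification $u_i=\tilde u_i$ the formulas for the action on $\pi_1(\mbbS^3\backslash TL_{(3,2)^n})$ and on $\pi_1(K_\delta\backslash\{P_i\})$ literally coincide. Since both actions are given by pre-composition on the homomorphism data, and $\Psi_{[x],[y]}$ alters only the packaging of the conjugating elements (normalising $\tilde a_y=1$ and extracting $g_x$ with $x=g_xx_0g_x^{-1}$, while keeping the homomorphism $(x,y_0,u_i)$ intact), it remains to check, for each braid generator $\beta\in\{\tilde\sigma_i,\tilde\rho_i\}$, the relation $\Psi_{[x],[y]}\circ\rho^{(3+1)}(i(\beta))=\rho^{(2+1)}(\beta)\circ\Psi_{[x],[y]}$. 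Each case reduces to comparing the action on $x,y,u_i$ through the matching formulas, the bookkeeping of the $a_i$ being carried along automatically; this establishes the commutativity of the square in Part (1).

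Granting the intertwining, the equality of images $Im(\rho_G^{(3+1)})=\Psi^{-1}_{[x],[y]}(Im(\rho_{Z(y_0^2)}^{(2+1)}))$ is immediate from the bijectivity of $\Psi_{[x],[y]}$, completing Part (1). For Part (2), linearising the bijection $\Psi_{[x],[y]}\colon S_{[x],[y]}\to F_{[x],[y]}$ yields $\mathbb{C}S_{[x],[y]}\cong V_{C_G(y_0^2)}(K_\delta\backslash\{P_i\};(a_{i,0}ga_{i,0}^{-1},x_0,y_0))$, where $Z(y_0^2)=C_G(y_0^2)$ and $y_0^2=x_0^3$ is the flux carried by the axis; summing over $([x],[y])$ and using the intertwining upgrades this to an isomorphism of motion-group representations, which is the asserted decomposition and provides the torus-link case of Conjecture \ref{mainconjecture}.

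I expect the principal difficulty to lie in the equivariance step, and specifically in the simultaneous tracking of the homomorphism and of the conjugating data $a_i$ under $\Psi_{[x],[y]}$. One must verify that the normalisation conventions built into $\Psi$—choosing the representative $y=y_0$ and setting $\tilde a_y=1$—are consistent with pre-composition for both the $\tilde\sigma_i$ and the $\tilde\rho_i$ families, so that passing between the link complement and the punctured cylinder introduces no spurious conjugation that would break the intertwining relation. Once this compatibility is confirmed, the remaining verifications are the routine substitutions already recorded in Proposition \ref{propaction}.
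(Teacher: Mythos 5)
Your proposal is correct and follows essentially the same route as the paper: the paper derives this proposition from exactly the ingredients you use — the invariance of $S_{[x],[y]}$ under pre-composition via Proposition \ref{propaction}, the identification $u_i=\tilde u_i$ matching the motion-group generators $\sigma_i,\rho_i$ with the cylinder braid generators $\tilde\sigma_i,\tilde\rho_i$ acting by the same formulas, and the well-definedness/bijectivity lemma for $\Psi_{[x],[y]}$, with the intertwining and image equality then read off generator by generator. Your explicit flagging of the normalization consistency ($y=y_0$, $\tilde a_y=1$) under pre-composition is the same bookkeeping the paper leaves implicit, so the two arguments coincide in substance.
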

\noindent
These results can be generalized to general coprimes $(p,q),p,q\geq 3$ using a similar argument. 
\begin{mythm}\label{DW:torus}
Let $TL_{(p,q)^n}$ be the torus link of $n$ copies of the $(p,q)$-torus knot in $\mbbS^3$ with a presentation for $\pi_1(\mbbS^3\backslash TL_{(p,q)^n},b):$
$$\pi_1(\mbbS^3\backslash TL_{(p,q)^n},b)=<x,y,u_i|x^p=y^q,y^qu_i=u_iy^q, i=1,2,...,n>
.$$
Suppose the $n$ components of  $TL_{(p,q)^n}$ are labeled by $\{((g,h),\unit)\}_{i=1}^n$ such that $gh=hg$. Then the representation of motion group of $TL_{(p,q)^n}$ in $\mbbS^3$ from $(3+1)$-DW TQFT decomposes as:
$$V_G(\mbbS^3\backslash TL_{(p,q)^n};\{((g,h),\unit)\}_{i=1}^n)$$
$$=\bigoplus_{[x],[y]}V_{C_G(y_0^q)}((K_{u,v} \backslash \{b_i\}_{i=1}^n);(a_{i,0}ga_{i,0}^{-1},x_0,y_0))\footnote{$(K_{u,v} \backslash \{b_i\}_{i=1}^n)$ is not a manifold if $u,v\neq 1$, but we can define the vector space as the span of the representations of the fundamental group as in the DW TQFTs.},$$
where $u,v$ are positive integers such that $pv-qu=1$, and 
$$K_{u,v}=\bigcup_{j>0}\iota(\{(\frac{1}{j}R_{\frac{vt}{u}}e^{-\frac{i\pi}{p}}\Xi_u,e^{2\pi it})|0\leq t\leq1\})\cup x^{\prime}\cup y^{\prime}
$$
is the $2$-complex obtained by attaching the two boundaries of the cylinder to two circles by $z\mapsto z^u,z\mapsto z^v$.

\end{mythm}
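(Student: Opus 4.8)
The plan is to transport, almost verbatim, the three-part argument carried out above for $(p,q)=(3,2)$, replacing the exponents $3,2$ by $p,q$ and the complementary $(1,1)$-curve by a $(u,v)$-curve with $pv-qu=1$ (positive such $u,v$ exist because $\gcd(p,q)=1$). First I would recompute $\pi_1(\mbbS^3\setminus TL_{(p,q)^n},b)$ as in Lemma \ref{DW:pione}: build the nested tori $T_j=\iota(\{(\frac{1}{j+1/2}e^{i\theta},e^{i\zeta})\})$, $j=1,\dots,n-1$, together with the $2$-complex $K_T$ swept out by a continuous family of parallel $(p,q)$-knots and capped along the cores $x',y'$ by $z\mapsto z^p$ and $z\mapsto z^q$, then retract the complement inside each meridian disk of $\mbbC\times\mbbS^1$ onto $\bigcup_i T_i\cup K_T$. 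Van Kampen's theorem applied along the curves $K_T\cap T_i$ gives $x^p=y^q$ from the capping maps; since $pv-qu=1$ the $(p,q)$- and $(u,v)$-curves form a basis of $\pi_1(T_i)\cong\mbbZ^2$, so the generators of each torus can be rewritten in terms of the central $(p,q)$-class $y^q$ and the $(u,v)$-class $u_i$, yielding the relation $y^qu_i=u_iy^q$ and exactly the presentation in the statement.

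Next I would verify that Goldsmith's motion generators $\{\sigma_i,\rho_i\}$ act on this presentation by the same words as in Proposition \ref{propaction}, namely $\sigma_i(u_i)=u_{i-1}u_i^{-1}u_{i+1}$ (with the convention $u_0=y$, $u_n=x$), $\rho_i(u_j)=(u_{i-1}u_i^{-1})u_j(u_{i-1}u_i^{-1})^{-1}$ for $j\ge i$, and the corresponding fixing of $x,y$. The only geometric input needed is that the $(u,v)$-curve meets each $(p,q)$-component transversally in a single point; granting this, the loop-tracking computations of Proposition \ref{propaction} carry over unchanged, and the induced motions $\tilde\sigma_i,\tilde\rho_i$ permute and rotate the $n$ punctures of $K_{u,v}\setminus\{P_i\}$ exactly as the braid and rotation generators do.

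I would then introduce $K_{u,v}$ as in the statement and define the bijection $\Psi_{[x],[y]}$ by the same formula as before, $[(x,y_0,u_i,a_i)]\mapsto[(x,y_0,u_i,a_ia_{i,0}^{-1},g_x,1)]$ with $x=g_xx_0g_x^{-1}$. The three checks mirror the preceding lemma after replacing $y_0^2$ by $y_0^q$: from $x=g_xx_0g_x^{-1}$ and $x^p=y_0^q$ one gets $g_xy_0^qg_x^{-1}=y_0^q$, so $g_x\in C_G(y_0^q)$, and from $a_i^{-1}y_0^qa_i=a_{i,0}^{-1}y_0^qa_{i,0}$ one gets $a_ia_{i,0}^{-1}\in C_G(y_0^q)$; injectivity and surjectivity follow exactly as in the $(3,2)$ case. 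Summing over conjugacy classes $[x],[y]$ and invoking the intertwining argument of Section \ref{DW:extended} together with the computed action gives the commuting diagram and the claimed decomposition
\[
V_G(\mbbS^3\setminus TL_{(p,q)^n};\{((g,h),\unit)\})=\bigoplus_{[x],[y]}V_{C_G(y_0^q)}(K_{u,v}\setminus\{b_i\};(a_{i,0}ga_{i,0}^{-1},x_0,y_0)).
\]

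The main obstacle is that for $u,v>1$ the space $K_{u,v}$ is genuinely a $2$-complex rather than a surface, since the cores are attached by the non-injective maps $z\mapsto z^u$ and $z\mapsto z^v$. Thus the right-hand space cannot be read off from an honest $(2+1)$-TQFT on a fiber surface; one must instead \emph{define} $V_{C_G(y_0^q)}(K_{u,v}\setminus\{b_i\})$ as the span of conjugacy classes of homomorphisms $\pi_1(K_{u,v}\setminus\{b_i\})\to C_G(y_0^q)$ with the prescribed boundary labels, in the style of Definition \ref{DW:vector}, and check that the motion generators restrict to well-defined self-maps of the singular complex $K_{u,v}$ inducing precisely the words above on $\pi_1$. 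The crux of this verification is that the $(u,v)$- and $(p,q)$-slopes continue to meet in a single transverse point on each torus $T_i$, which is exactly the content of $pv-qu=1$; tracking this unit intersection through the explicit loop words of Proposition \ref{propaction} is the one place where the general case genuinely requires care beyond the $(3,2)$ computation.
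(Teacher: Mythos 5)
Your proposal is correct and is essentially the paper's own argument: the paper proves the theorem by asserting that the $(3,2)$ computation (Lemma \ref{DW:pione}, Proposition \ref{propaction}, and the bijection $\Psi_{[x],[y]}$) \lq\lq can be generalized to general coprimes $(p,q)$ using a similar argument,'' and you have spelled out exactly that generalization, including the two genuinely load-bearing points — the single transverse intersection of the $(u,v)$- and $(p,q)$-slopes coming from $pv-qu=1$, and the need to define $V_{C_G(y_0^q)}(K_{u,v}\backslash\{b_i\})$ via homomorphism classes since $K_{u,v}$ is a $2$-complex rather than a surface.
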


%\printbibliography
\end{document}